\newtheorem{theorem}{Theorem}[section]
\newtheorem{proposition}[theorem]{Proposition}
\newtheorem{definition}[theorem]{Definition}
\newtheorem{lemma}[theorem]{Lemma}
\newtheorem{corollary}[theorem]{Corollary}
\providecommand{\st}{\ensuremath{\textrm{s.t.}\quad}}
\providecommand{\ones}{\mathbf{1}}
\providecommand{\bR}{\mathbb R}
\providecommand{\cB}{\mathcal{B}}
\providecommand{\cP}{\mathcal{P}}
\providecommand{\cPH}{\mathcal{PH}}
\providecommand{\cSN}{\mathcal{SN}}
\newcommand{\Tr}{\ensuremath{\rm{Trace}}}
\renewcommand{\st}{\text{such that}}
\title{Sorting Network Relaxations for Vector Permutation Problems}
\author{
Cong Han Lim, Stephen J. Wright \\
Computer Sciences Department, University of Wisconsin - Madison \\
\texttt{conghan@cs.wisc.edu, swright@cs.wisc.edu} \\
}
\definecolor{antiquefuchsia}{rgb}{0.57, 0.36, 0.51}
\definecolor{darkgreen}{rgb}{0.0, 0.5, 0.0}
\begin{document}

\maketitle

\begin{abstract}
The Birkhoff polytope (the convex hull of the set of permutation
matrices) is frequently invoked in formulating relaxations of
optimization problems over permutations. The Birkhoff polytope is
represented using $\Theta(n^2)$ variables and constraints,
significantly more than the $n$ variables one could use to represent a
permutation as a vector. Using a recent construction of Goemans
\cite{Goemans2010}, we show that when optimizing over the convex hull
of the permutation vectors (the permutahedron), we can reduce the
number of variables and constraints to $\Theta(n \log n)$ in theory
and $\Theta(n \log^2 n)$ in practice. We modify the recent convex
formulation of the 2-SUM problem introduced by Fogel
et~al.~\cite{Fogel2013} to use this polytope, and demonstrate how we
can attain results of similar quality in significantly less
computational time for large $n$. To our knowledge, this is the first
usage of Goemans' compact formulation of the permutahedron in a convex
optimization problem. We also introduce a simpler regularization
scheme for this convex formulation of the 2-SUM problem that yields
good empirical results.
\end{abstract}

\section{Introduction} \label{sec.introduction}

A typical workflow for converting a discrete optimization problem over
the set of permutations of $n$ objects into a continuous relaxation is
as follows: (1) use permutation matrices to represent permutations;
(2) relax to the convex hull of the set of permutation matrices ---
the Birkhoff polytope; (3) relax other constraints to ensure
convexity/continuity. Examples of this procedure appear in
\cite{Fiori2013,Fogel2013}.  Representation of the Birkhoff polytope
requires $\Theta(n^2)$ variables, significantly more than the $n$
variables required to represent the permutation directly. The increase
in dimension is unappealing, especially if we are only interested in
optimizing over permutation vectors, as opposed to permutations of a
more complex object, such as a graph. The obvious alternative of using
a relaxation based on the convex hull of the set of permutations (the
permutahedron) is computationally infeasible, because the
permutahedron has exponentially many facets (whereas the Birkhoff
polytope has only $n^2$ facets). We can achieve a better trade-off
between the number of variables and facets by considering the
\emph{extension complexity} of the permutahedron, which is the minimum
number of linear inequalities required to describe a polytope that can
be linearly projected onto the permutahedron. Goemans
\cite{Goemans2010} recently proved that the extension complexity of
the permutahedron is $\Theta(n \log n)$ by describing how
\emph{sorting networks} can be used to construct such polytopes with
as few as $\Theta(n \log n)$ facets, and by providing a matching lower
bound. In this paper, we use a relaxation based on these polytopes,
which we call ``sorting network polytopes.'' When optimizing over the
set of permutations of a linear vector, we can use this tighter
formulation to obtain results similar to those obtained with the 
Birkhoff polytope, but in significantly less time, for large values 
of $n$.

\begin{table}[t] 
  \centering
  {
  \begin{tabular}{|c|c|c|}
    \hline Polytope         & Dimensions & Facets \\
    \hline Permutahedron    & $n - 1$                 & $2^n - 2$ \\
    \hline Birkhoff         & $n^2 - 2n + 1$               & $n^2$  \\
    \hline Sorting Network  & $\Theta(n \log n)$  & $\Theta(n \log n)$  \\
    \hline
  \end{tabular}
  }
  \caption{Comparison of three different polytopes. The figures
    listed under `Dimensions' are the dimensions for each of the polytopes as
    opposed to the dimension of the space they live in. The asymptotic figures
    listed for the sorting network polytope represents the best achievable 
    bounds.}
  \label{tbl:polytope_comparison}
\end{table}

We apply the sorting network polytope to the \emph{noisy seriation
problem}, defined as follows. Given a noisy similarity matrix $A$,
recover a symmetric row/column ordering of $A$ for which the entries
generally decrease with distance from the diagonal. Fogel
et~al.~\cite{Fogel2013} introduced a convex relaxation of the 2-SUM
problem to solve the noisy seriation problem. They proved that the
solution to the 2-SUM problem recovers the exact solution of the
seriation problem in the ``noiseless'' case (the case in which an
ordering exists that ensures monotonic decrease of similarity measures
with distance from the diagonal). They further show that the
formulation allows for the incorporation of side information about the
ordering, and is more robust to noise than a spectral formulation of
the 2-SUM problem described by Atkins et~al.~\cite{Atkins1998}. The
formulation in \cite{Fogel2013} makes use of the Birkhoff polytope. We
propose instead a formulation based on the sorting network polytope.
Performing convex optimization over the sorting network polytope
requires different formulation and solution techniques from those
described in \cite{Fogel2013}. In addition, we describe a new
regularization scheme, applicable both to our formulation and that of
\cite{Fogel2013}, that is more natural for the 2-SUM problem and has
good practical performance.

The paper is organized as follows. We begin by describing polytopes
for representing permutations in Section~\ref{sec:permutahedron}. In
Section~\ref{sec:convex}, we introduce the seriation problem and the
2-SUM problem and describe two continuous relaxations for the latter,
one of which uses the sorting network polytope. In the process, we
derive a new and simple regularization scheme for strengthening the
relaxations. Issues that arise in using the sorting network polytope
are discussed in Section~\ref{sec:practical}. Here we describe methods
for obtaining a permutation from a point in the permutahedron, methods
that are useful for solving the optimization problem efficiently, and
strengthening of the formulation by using side information about
ordering. In Section~\ref{sec:exp}, we provide experimental results
showing the effectiveness of our approach. The appendix includes the
proofs of the results covered in the course of the paper. It also
describes an efficient algorithm for taking a conditional gradient
step for the convex formulation in the special case in which the
formulation contains no side information, and gives some additional 
computational results.

\section{Permutahedron, Birkhoff Polytope, and Sorting Networks}
\label{sec:permutahedron}

In this section, we introduce relevant notation and review the
polytopes used in the rest of the paper.

We use $n$ throughout the paper to refer to the length of the
permutation vectors. $\pi_{I_n} = (1,2,\dotsc,n)^T$ denotes the
identity permutation. (When the size $n$ can be inferred from the
context, we write the identity permutation as $\pi_I$.)  $\cP^n$
denotes the set of all permutations vectors of length $n$. We use $\pi
\in \cP^n$ to denote a generic permutation, and denote its components
by $\pi(i)$, $i=1,2,\dotsc,n$. We use $\ones$ to denote the vector of
length $n$ whose components are all $1$.
\begin{definition}
  The \emph{permutahedron} $\cPH^n$, the convex hull of $\cP^n$, is 
  \[
    \left\{ x \in \bR^n \:\middle|\: \sum_{i=1}^n x_i =
      \frac{n(n+1)}{2},
    \sum_{i \in S} x_i \leq 
    \sum_{i=1}^{|S|} (n + 1 -i) \mbox{ for all }S\subset [n] \right\}.
  \]
\end{definition}
The permutahedron $\cPH^n$ has $2^n - 2$ facets, which prevents us
from using it in optimization problems directly. (We should note
however that the permutahedron is a submodular polyhedron and hence
admits efficient algorithms for certain optimization problems.)
Relaxations are commonly derived from the set of permutation matrices
(the set of $n \times n$ matrices containing zeros and ones, with a
single one in each row and column) and its convex hull instead.

\begin{definition}
  The convex hull of the set of $n \times n$ permutation matrices is
  the \emph{Birkhoff polytope} $\cB^n$, which is the set of all
  doubly-stochastic $n \times n$ matrices:
  \[
    \{ X \in \bR^{n \times n} \mid X \geq 0, X\ones = \ones, 
      X^T\ones = \ones \}.
  \]
\end{definition}

The Birkhoff polytope has been widely used in the machine learning and
computer vision communities for various permutation problems (see for
example \cite{Fogel2013}, \cite{Fiori2013}). The permutahedron can be
represented as the projection of the Birkhoff polytope from $\bR^{n
\times n}$ to $\bR^n$ by $x_i = \sum_{j=1}^n j \cdot X_{ij}$.  The
Birkhoff polytope is sometimes said to be an \emph{extended
formulation} of the permutahedron.

A natural question to ask is whether a more compact extended
formulation exists for the permutahedron. Goemans~\cite{Goemans2010}
answered this question in the affirmative by constructing an extended
formulation with $\Theta(n\log n)$ constraints and variables, which is
optimal up to constant factors. His construction is based on
\emph{sorting networks}, a collection of wires and binary comparators
that sorts a list of numbers.  Figure~\ref{fig:sorting_network}
displays a sorting network on $4$ variables. (See \cite{Cormen2001}
for further more information on sorting networks.)

\begin{figure} 
  \begin{center}
    \begin{tabular}{cc}
      \parbox[c]{6.5cm}{\includegraphics[width=60mm]{./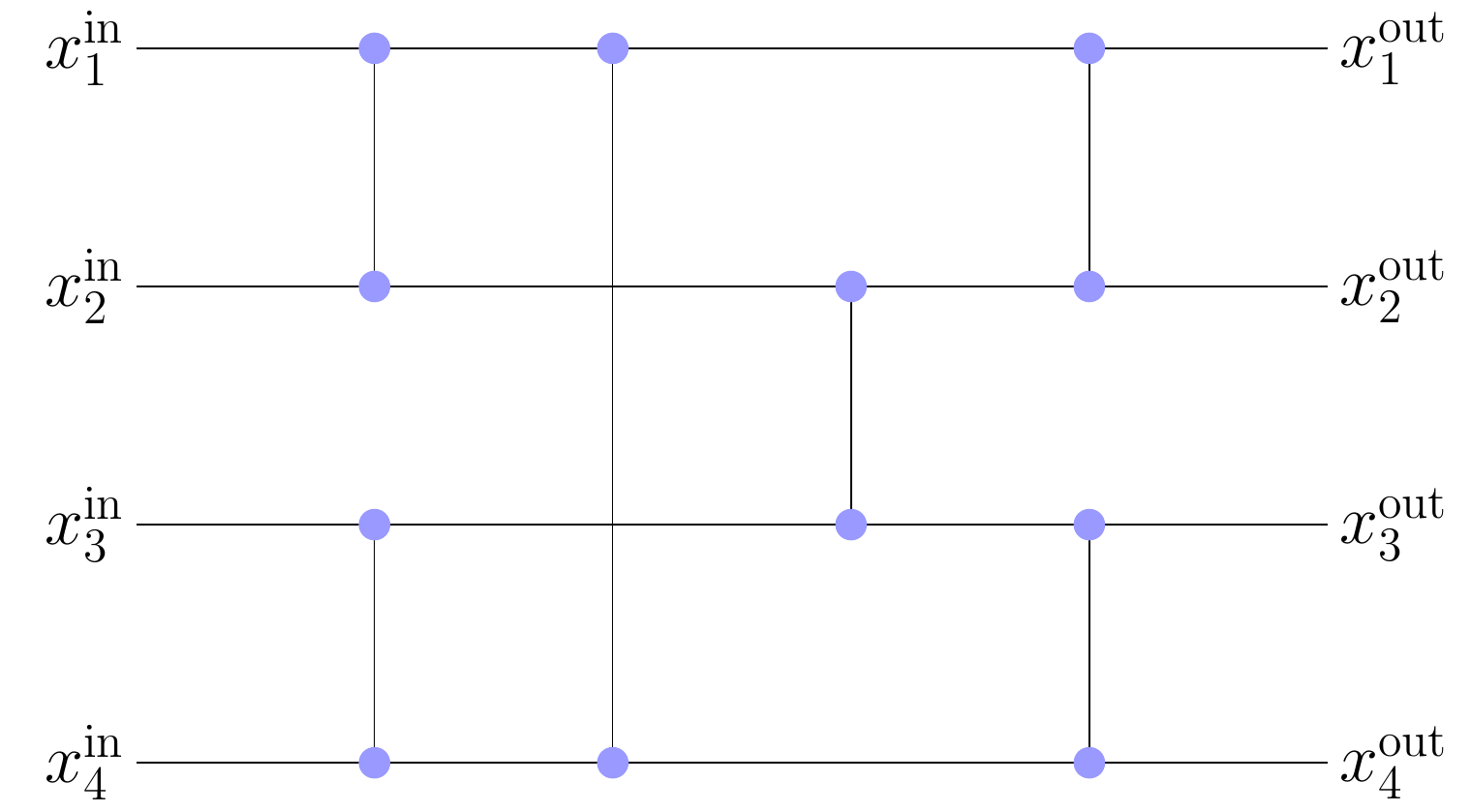}}
      &\parbox[c]{5.0cm}{\includegraphics[width=50mm]{./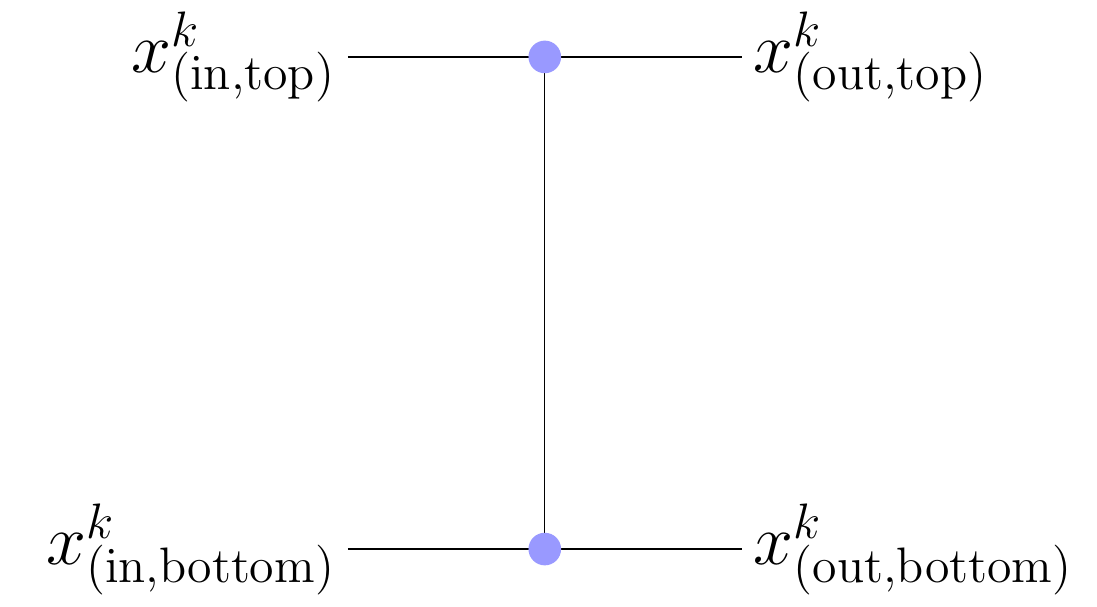}}
    \end{tabular}
  \end{center}
  \caption{\label{fig:sorting_network}A bitonic sorting network on 4
    variables (left) and the $k$-th comparator (right). The input to
    the sorting network is on the left and the output is on the
    right. At each comparator, we take the two input values and sort
    them such that the smaller value is the one at the top in the
    output. Sorting takes place progressively as we move from left to
    right through the network, sorting pairs of values as we encounter
    comparators.}
\end{figure}

Given a sorting network on $n$ inputs with $m$ comparators (we will
subsequently always use $m$ to refer to the number of comparators), an
extended formulation for the permutahedron with $O(m)$ variables and
constraints can be constructed as follows \cite{Goemans2010}.
Referring to the notation in the right subfigure in
Figure~\ref{fig:sorting_network}, we introduce a set of constraints
for each comparator $k=1,2,\dotsc,m$ to indicate the relationships
between the two inputs and the two outputs of each comparator:
\begin{align} \label{eqn:comparator}
  \begin{split}
    x^{k}_{\text{(in, top)}} + x^{k}_{\text{(in, bottom)}} &= 
    x^{k}_{\text{(out, top)}} + x^{k}_{\text{(out, bottom)}} \\
    x^{k}_{\text{(out, top)}} &\leq x^{k}_{\text{(in, top)}}\\
    x^{k}_{\text{(out, top)}} &\leq x^{k}_{\text{(in, bottom)}}.
  \end{split}
\end{align}
Note that these constraints require the sum of the two inputs to be
the same as the sum of the two outputs, but the inputs can be closer
together than the outputs.  Let $x^\text{in}_i$ and $x^\text{out}_i$,
$i=1,2,\dotsc,n$ denote the $x$ variables corresponding to the $i$th
input or output to the entire sorting network, respectively. We
introduce the additional constraints
\begin{align} \label{eqn:output_wires}
  x^\text{out}_i = i, \mbox{ for } i \in [n].
\end{align}
The details of this construction depend on the particular choice of
sorting network (see Section~\ref{sec:practical}), but we will refer
to it generically as the \emph{sorting network polytope} $\cSN^n$.
Each element in this polytope can be viewed as a concatenation of two
vectors: the subvector associated with the network inputs
$x^{\text{in}} = (x^\text{in}_1, x^\text{in}_2, \dotsc,
x^\text{in}_n)$, and the rest of the coordinates $x^{\text{rest}}$,
which includes all the internal variables as well as the outputs.

\begin{theorem}[Goemans \cite{Goemans2010}] \label{thm:goemans}
  Given the construction above, the set $\{ x^{\text{in}} \:|\;
  (x^{\text{in}}, x^{\text{rest}}) \in \cSN^n \}$ is the permutahedron
  $\cPH^n$.
\end{theorem}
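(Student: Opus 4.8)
The plan is to establish the claimed set equality by proving two inclusions, using throughout that the map $(x^{\text{in}}, x^{\text{rest}}) \mapsto x^{\text{in}}$ is a coordinate projection, so that the image of the polytope $\cSN^n$ is itself a convex set.

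For the inclusion $\cPH^n \subseteq \{x^{\text{in}} \mid (x^{\text{in}}, x^{\text{rest}}) \in \cSN^n\}$, I would first show that every permutation vector is attainable as an input. Given $\pi \in \cP^n$, set $x^{\text{in}} = \pi$ and run it through the network with every comparator performing an honest sort, placing the smaller value on the top output wire and the larger on the bottom. This assignment satisfies the comparator relations \eqnok{eqn:comparator}, with $x^{k}_{\text{(out, top)}}$ equal to the minimum of the two inputs (hence no larger than either) and equal input/output sums. Because the network is a sorting network, the resulting outputs are $(1, 2, \dotsc, n)$, which meets \eqnok{eqn:output_wires}; hence $\pi$ lies in the projection. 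Since the projection is convex and contains every element of $\cP^n$, it contains $\conv(\cP^n) = \cPH^n$.

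For the reverse inclusion, I would show that any feasible $x^{\text{in}}$ satisfies the defining constraints of $\cPH^n$. Order the comparators so that they can be applied left to right, and for each vertical cut through the network let $z$ denote the $n$-vector of current wire values, so that the first cut gives $x^{\text{in}}$ and the last gives $x^{\text{out}} = (1, \dotsc, n)$. Define $f_k(z) = \max_{|S| = k} \sum_{i \in S} z_i$, the sum of the $k$ largest wire values. The heart of the argument is a monotonicity lemma: passing through a single comparator cannot decrease $f_k$. A comparator changes only two coordinates, replacing inputs $(a,b)$ by outputs $(c,d)$ with $c \le \min(a,b)$ and $c + d = a + b$, so that $d \ge \max(a,b)$; taking an optimal index set $S$ for the pre-comparator vector and checking the cases in which $S$ contains both, neither, or exactly one of the two affected wires shows $f_k$ does not decrease. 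Chaining this across all comparators gives $f_k(x^{\text{in}}) \le f_k(x^{\text{out}}) = \sum_{j=1}^{k}(n + 1 - j)$. Since $\sum_{i \in S} x^{\text{in}}_i \le f_{|S|}(x^{\text{in}})$ for every $S \subset [n]$, this yields all the permutahedron inequalities, while the sum-preservation equality in \eqnok{eqn:comparator} (applied at every cut, i.e.\ the case $k = n$) forces $\sum_i x^{\text{in}}_i = n(n+1)/2$.

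The main obstacle is the monotonicity lemma, and within it the case in which the optimal top-$k$ set meets the comparator in exactly one wire: there one must argue that replacing the top wire by the bottom wire in the index set produces a candidate set whose value is at least the original, which is precisely where $d \ge \max(a,b)$ is used. Everything else --- sum preservation through successive cuts, convexity of the projection, and the evaluation $f_k(1, \dotsc, n) = \sum_{j=1}^{k}(n+1-j)$ --- is routine.
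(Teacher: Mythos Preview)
Your proposal is correct, and the forward inclusion (every permutation is a feasible input by running the network honestly, then pass to the convex hull) matches the paper's argument. For the reverse inclusion, however, the paper's alternative proof takes a different and somewhat shorter route. Instead of tracking the top-$k$ partial sums $f_k$ forward through the network and proving they are nondecreasing across each comparator, the paper works \emph{backward} from the output: it shows that if the vector $z$ of wire values just \emph{after} a comparator lies in $\cPH^n$, then so does the vector $y$ of values just \emph{before} it. The key observation is that the constraints \eqnok{eqn:comparator} force the input pair $(y_a,y_b)$ to be a convex combination of the output pair $(z_a,z_b)$ and its swap $(z_b,z_a)$; since $\cPH^n$ is invariant under coordinate permutations and is convex, both $z$ and its swap lie in $\cPH^n$, hence so does $y$. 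Your approach has the virtue of directly verifying the defining inequalities of $\cPH^n$ and making the majorization-type monotonicity at each comparator explicit; the paper's approach sidesteps the three-case analysis entirely by exploiting the symmetry and convexity of $\cPH^n$ in a single stroke.
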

In fact, the proof of Theorem \ref{thm:goemans} shows that this
construction can be used to describe the convex hull of the
permutations of any single vector.  Let $v$ be a 
monotonically-increasing vector. Then, if we replace the constraints
\eqref{eqn:output_wires} with
\begin{align} \label{eqn:output_wires_general}
  x^\text{out}_i = v_i, \mbox{ for } i \in [n]
\end{align}
the set $\{ x^{\text{in}} \} $ now corresponds to the convex hull of
all permutations of $v$. We include a simple alternative proof of this
fact (and by extension Theorem \ref{thm:goemans}) in
Appendix~\ref{app:proofs}.

\section{Convex Relaxations of 2-SUM via Sorting Network Polytope} 
\label{sec:convex}

In this section we will briefly describe the seriation problem, and
some of the continuous relaxations of the combinatorial 2-SUM problem
that can be used to solve this problem.

\subsection{The Noiseless Seriation Problem}

The term \emph{seriation} generally refers to data analysis techniques
that arrange objects in a linear ordering in a way that fits available
information and thus reveals underlying structure of the system
\cite{Liiv2010}. We adopt here the definition of the \emph{seriation
  problem} from \cite{Atkins1998}.  Suppose we have $n$ objects
arranged along a line, and a similarity function that increases with
distance between objects in the line. The similarity matrix is the $n
\times n$ matrix whose $(i, j)$ entry is the similarity measure
between the $i$th and $j$th objects in the linear arrangement. This
similarity matrix is a \emph{R-matrix}, according to the following
definition.
\begin{definition}
  A symmetric matrix $A$ is a \emph{Robinson matrix} (\emph{R-matrix})
  if for all points $(i, j)$ where $i > j$, we have $A_{ij} \leq \min
  (A_{(i-1)j}, A_{i(j+1)})$.  A symmetric matrix $A$ is a \emph{pre-R
    matrix} if $\Pi^T A \Pi$ is R for some permutation $\Pi$.
\end{definition}
In other words, a symmetric matrix is a R-matrix if the entries are
nonincreasing as we move away from the diagonal in either the
horizontal or vertical direction. The goal of the \emph{noiseless
seriation problem} is to recover the ordering of the variables along
the line from the pairwise similarity data, which is equivalent to
finding the permutation that recovers an R-matrix from a pre-R-matrix.

The seriation problem was introduced in the archaeology literature
\cite{Robinson1951}, and has applications across a wide range of areas
including clustering \cite{Ding2004}, shotgun DNA sequencing
\cite{Fogel2013}, and taxonomy \cite{Sokal1963}.  R-matrices are
useful in part because of their relation to the \emph{consecutive-ones
property} in a matrix of zeros and ones, where the ones in each
column form a contiguous block. 
%This property is relevant in in some computational biology problems. 
A matrix $M$ with the consecutive-ones property gives rise to a
R-matrix $MM^T$, so the matrix $\Pi^T M$ with rows permuted by $\Pi$
leads to a pre-R-matrix $\Pi^T M M^T\Pi$.

\subsection{Noisy Seriation, 2-SUM and Continuous Relaxations}

Given a binary symmetric matrix $A$, the 2-SUM problem can be
expressed as the following.
\begin{align} \label{eqn:2sum}
  \min_{\pi \in \cP^n}\:\: & \sum_{i=1}^n 
    \sum_{j=1}^n A_{ij} (\pi(i) - \pi(j))^2.
\end{align}
A slightly simpler but equivalent formulation, defined via the
Laplacian $L_A = \mbox{diag}(A \ones ) - A$, is
\begin{align} \label{eqn:2sumlap}
    \min_{\pi \in \cP^n}\:\: \pi^T L_A \pi.
\end{align}
The seriation problem is closely related to the combinatorial 2-SUM
problem, and Fogel et~al.\ \cite{Fogel2013} proved that if $A$ is a
pre-$R$-matrix such that each row/column has unique entries, then the
solution to the 2-SUM problem also solves the noiseless seriation
problem. In another relaxation of the 2-SUM problem, Atkins et~al.\
\cite{Atkins1998} demonstrate that finding the second smallest
eigenvalue, also known as the Fiedler value, solves the noiseless
seriation problem.  Hence, the 2-SUM problem provides a good model for
the \emph{noisy} seriation problem, where the similarity matrices are
close to, but not exactly, pre-R matrices.

The 2-SUM problem is known to be $NP$-hard \cite{George1997}, so we
would like to study efficient relaxations. We describe below two
continuous relaxations that are computationally practical. (Other
relaxations of these problems require solution of semidefinite
programs and are intractable in practice for large $n$.)

The spectral formulation of \cite{Atkins1998} seeks the Fiedler value
by searching over the space orthogonal to the vector $\ones$, which is
the eigenvector that corresponds to the zero eigenvalue. The Fiedler
value is the optimal objective value of the following problem:
\begin{align} \label{eqn:fiedler}
  \min_{y\in\bR^n}\:\:  y^T  L_A  y
  \quad\st\quad  y^T\ones=0, \;\; \|y\|_2=1.
\end{align}
This problem is non-convex, but its solution can be found efficiently
from an eigenvalue decomposition of $L_A$.  With Fiedler vector $y$,
one can obtain a candidate solution to the 2-SUM problem by picking
the permutation $\pi \in \cP^n$ to have the same ordering as the
elements of $y$. Another perspective is given by \cite{Vuokko2010},
who prove that the spectral solution minimizes the Frobenius norm of
the strongest principal component of a particular derived matrix, and
minimizing the Frobenius norm of this matrix is equivalent to
minimizing the sums of consecutive zeros between the first and last
non-zero entry of a 0-1 matrix.  We show in Appendix~\ref{sec:sf2}
that the spectral formulation \eqref{eqn:fiedler} is a continuous
relaxation of the 2-SUM problem \eqref{eqn:2sumlap}.

The second relaxation of \eqref{eqn:2sumlap}, described by Fogel
et~al.\ \cite{Fogel2013}, makes use of the Birkhoff polytope $\cB^n$.
The basic version of the formulation is
\begin{align} \label{eqn:fogel_basic}
  \min_{\Pi \in \cB^n}\:\: & \pi_I^T \Pi^T L_A \Pi \pi_I,
\end{align} 
(recall that  $\pi_I$ is the identity permutation $(1, 2, \dotsc,
n)^T$), which is a convex quadratic program.  Fogel et~al.\ augment
and enhance this formulation as follows.
\begin{itemize} 
  \item Introduce a ``tiebreaking'' constraint $e_1^T \Pi \pi_I + 1 
    \leq e^T_n\Pi \pi_I$ to resolve ambiguity about the direction of
    the ordering, where $e_k = (0,\dotsc,0,1,0,\dotsc,0)^T$ with the
    $1$ in position $k$.
  \item Average over several perturbations of $\pi_I$ to improve
    robustness of the solution.
  \item Add a penalty to maximize the Frobenius norm of the matrix 
    $\Pi$, which pushes the solution closer to a vertex of the
    Birkhoff polytope.
  \item Incorporate additional ordering constraints of the form 
    $x_i - x_j \leq \delta_k$, to exploit prior knowledge about the
    ordering.
\end{itemize}
With these modifications, the problem to be solved is
\begin{align} \label{eqn:fogel_full}
  \min_{\Pi \in \cB^n}\:\: \frac{1}{p} 
    \text{Trace}( Y^T \Pi^T L_A \Pi Y) - \frac{\mu}{p} \| P\Pi \|^2_F 
  \quad\st\quad  D \Pi \pi_I \leq \delta,
\end{align}
where each column of $Y \in \bR^{n\times p}$ is a slightly perturbed
version of a permutation,\footnote{In \cite{Fogel2013}, each column of
$Y$ is said to contain a perturbation of $\pi_I$, but in a response to
referees of their paper, the authors say that they used sorted uniform
random vectors instead in the revised version.} $\mu$ is the
regularization coefficient, the constraint $D\Pi \pi_I \leq \delta$
contains the ordering information and tie- breaking constraints, and
the operator $P = I -\frac{1}{n} \ones \ones^T$ is the projection of
$\Pi$ onto elements orthogonal to the all-ones matrix. The
penalization is applied to $\| P\Pi \|^2_F$ rather than to $\| \Pi
\|^2_F$ directly, thus ensuring that the program remains convex if the
regularization factor is sufficiently small (for which a sufficient
condition is $\mu < \lambda_2 (L_A) \lambda_1 (YY^T)$). We will refer
to this regularization scheme as the \emph{matrix-based
regularization}, and to the formulation \eqref{eqn:fogel_full} as the
\emph {matrix-regularized Birkhoff-based convex formulation}.

Figure~\ref{fig:spectral_vs_fogel} illustrates the permutahedron and
the solutions produced by the methods described above. The spectral
method returns good solutions when the noise is low and it is
computationally efficient since there are many fast algorithms and
software for obtaining selected eigenvectors. However, the Birkhoff-
based convex formulation can return a solution that is significantly
better in situations with high noise or significant additional
ordering information.  For the rest of this section, we will focus on
the convex formulation.

\subsection{A Compact Convex Relaxation via the 
Permutahedron/Sorting Network Polytope and a New Regularization
Scheme}

\begin{figure} 
  \begin{center}
    \begin{tabular}{cc}
      \parbox[c]{6.5cm}{\includegraphics[width=55mm]{./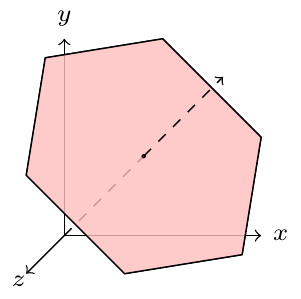}}
      &\parbox[c]{6.5cm}{\includegraphics[width=60mm]{./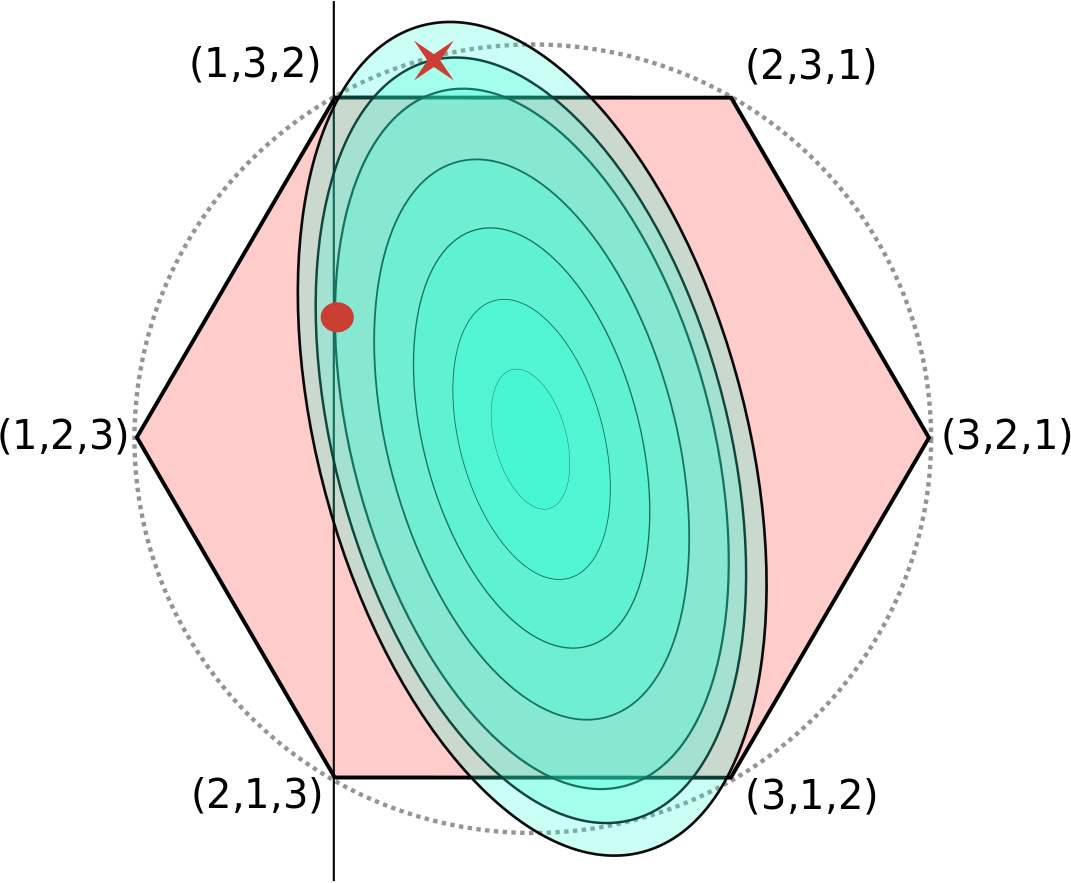}}
    \end{tabular}
  \end{center}
  \caption{A geometric interpretation of spectral and convex
    formulation solutions on the 3-permutahedron. The left image shows
    the 3-permutahedron in 3D space and the dashed line shows the
    eigenvector $\ones$ corresponding to the zero eigenvalue.  The
    right image shows the projection of the 3-permutahedron along the
    trivial eigenvector together with the elliptical level curves of
    the  objective function $y^T L_A y$. Points on the
    circumscribed circle have an $\ell_2$-norm equal to that of a
    permutation, and the objective is minimized over this
    circle at the point denoted by a cross.  The vertical line in the
    right figure enforces the tiebreaking constraint that $1$ must
    appear before $3$ in the ordering; the red dot indicates the
    minimizer of the objective over the resulting triangular
    feasible region. Without the
    tiebreaking constraint, the minimizer is at the center of the
    permutahedron.}
    \label{fig:spectral_vs_fogel}
\end{figure}

We consider now a different relaxation forthe 2-SUM problem
\eqref{eqn:2sumlap}. Taking the convex hull of $\cP^n$ directly, we
obtain
\begin{align} \label{eqn:perm_basic}
  \min_{x \in \cPH^n}\:\: & x^T L_A x.
\end{align}
This is essentially a permutahedron-based version of
\eqref{eqn:fogel_basic}. In fact, two problems are equivalent, except
that formulation (\ref{eqn:perm_basic}) is more compact when we
enforce $x \in \cPH$ via the sorting network constraints
\[ 
x \in \{ x^{\text{in}} \:|\; (x^{\text{in}}, x^{\text{rest}}) \in
\cSN^n \},
\]
where $\cSN^n$ incorporates the comparator constraints and the output
constraints \eqref{eqn:output_wires}.  This formulation can be
enhanced and augmented in a similar fashion to
(\ref{eqn:fogel_basic}). The tiebreaking constraint for this
formulation can be expressed simply as $x_1 + 1 \leq x_n$, since
$x^{\text{in}}$ consists of the subvector $(x_1,x_2,\dotsc,x_n)$.  (In
both (\ref{eqn:perm_basic}) and (\ref{eqn:fogel_basic}), having at
least one additional constraint is necessary to remove the trivial
solution given by the center of the permutahedron or Birkhoff
polytope; see Figure~\ref{fig:spectral_vs_fogel}.) This constraint is
the strongest inequality that will not eliminate any permutation
(assuming that a permutation and its reverse are equivalent); we
include a proof of this fact in
Appendix~\ref{sec:tiebreak_characterization}.

It is also helpful to introduce a penalty to force the solution $x$ to
be closer to a permutation, that is, a vertex of the permutahedron. To
this end, we introduce a \emph{vector-based regularization scheme}.
The following statement about the norms of permutations is an
immediate consequence of strict convexity of norms.

\begin{proposition} \label{prop:perm_norm}
  Let $v \in \bR^n$, and consider the set $X$ consisting of the convex
  hull of all permutations of $v$. Then, the points in $X$ with the
  highest $\ell_p$ norm, for $1<p<\infty$, are precisely the
  permutations of $v$.
\end{proposition}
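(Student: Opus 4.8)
The plan is to reduce the maximization of the $\ell_p$ norm over $X=\conv\{\pi(v)\mid \pi\in\cP^n\}$ to the maximization of the strictly convex function $f(x):=\|x\|_p^p=\sum_{i=1}^n |x_i|^p$, and then to use strict convexity to pin down the maximizers exactly. Since $t\mapsto t^p$ is strictly increasing on $[0,\infty)$, a point $x\in X$ maximizes $\|x\|_p$ if and only if it maximizes $f$, so it suffices to analyze $f$. I would first record two elementary facts: (i) every permutation of $v$ has the same $\ell_p$ norm, since permuting coordinates leaves $\sum_i|x_i|^p$ unchanged, so all of them share the common value $N^p$ with $N=\|v\|_p$; and (ii) the scalar map $t\mapsto |t|^p$ is strictly convex on $\bR$ for $1<p<\infty$, whence $f$, being a coordinatewise sum of such terms, is strictly convex on $\bR^n$. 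Concretely, for any finite set of points $y_1,\dots,y_m\in\bR^n$ and weights $\lambda_1,\dots,\lambda_m>0$ with $\sum_j\lambda_j=1$, we have $f(\sum_j\lambda_j y_j)\le\sum_j\lambda_j f(y_j)$, with equality only when all the $y_j$ coincide.

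With these in hand the two inclusions follow quickly. For the direction that every maximizer is a permutation, take an arbitrary $x\in X$ and write it, by definition of the convex hull, as $x=\sum_j\lambda_j\,\pi_j(v)$ with $\lambda_j>0$ and $\sum_j\lambda_j=1$. By fact (i), $f(\pi_j(v))=N^p$ for every $j$, so Jensen gives $f(x)\le N^p$; by strict convexity this is strict unless all the $\pi_j(v)$ appearing with positive weight are equal, i.e.\ unless $x$ is itself a permutation of $v$. Hence any $x\in X$ that is not a permutation of $v$ satisfies $f(x)<N^p$. For the reverse direction, fact (i) shows that every permutation of $v$ attains $f=N^p$, which by the previous sentence is the maximum value of $f$ over $X$. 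Combining the two, the set of points of $X$ of maximal $\ell_p$ norm is exactly the set of permutations of $v$.

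I do not expect a serious obstacle, since the statement is essentially a repackaging of strict convexity. The only points requiring a line of care are verifying strict convexity of $t\mapsto|t|^p$ across the whole range $1<p<\infty$ (the subrange $1<p<2$ is non-smooth at the origin yet still strictly convex) together with the remark that a difference in even a single coordinate already forces the coordinatewise inequality defining strict convexity of $f$ to be strict; and stating the strict form of Jensen's inequality for a general convex combination rather than just a two-point midpoint. A final remark worth including is that this argument handles repeated entries of $v$ automatically: if $v$ has equal coordinates then several index-permutations produce the same vector, but the proof only ever compares \emph{distinct} vectors, so a point of $X$ equal to a single permutation vector is correctly identified as a maximizer regardless of multiplicity.
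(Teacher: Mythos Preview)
Your proof is correct and follows essentially the same idea as the paper's: both rely on the fact that all permutations of $v$ share the same $\ell_p$ norm and invoke strict convexity to rule out any non-vertex point of the convex hull as a maximizer. Your version is simply a more careful unpacking of the one-line argument in the paper, in particular by passing to $f(x)=\|x\|_p^p$ (which is genuinely strictly convex on $\bR^n$) rather than appealing directly to strict convexity of the norm.
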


It follows that adding a penalty to encourage $\| x \|_2$ to be large
might improve solution quality. However, directly penalizing the
negative of the $2$-norm of $x$ would destroy convexity, since $L_A$
has a zero eigenvalue. Instead we penalize $Px$, where $P = I -
\frac{1}{n} \ones \ones^T$ projects onto the subspace orthogonal to
the trivial eigenvector $\ones$. (Note that this projection of the
permutahedron still satisfies the assumptions of
Proposition~\ref{prop:perm_norm}.) When we include a penalty on $\|
Px\|_2^2$ in the formulation \eqref{eqn:perm_basic} along with side
constraints $Dx \le \delta$ on the ordering, we obtain
\begin{align*}
  \min_{x \in \cPH^n}\:\: x^T L_A  x  - \mu\|Px\|_2^2 
  \quad\st\quad Dx \leq \delta,
\end{align*}
which can be written as 
\begin{align} \label{eqn:perm_full}
  \min_{x \in \cPH^n}\:\:  x^T (L_A - \mu P) x 
  \quad \st \quad Dx \leq \delta.
\end{align}
Vector-based regularization decreases all nonzero eigenvalues of $L_A$
by the constant $\mu$ while keeping the same eigenvectors. This means
that (\ref{eqn:perm_full}) is convex if $\mu$ is smaller than the
Fiedler value. This regularization procedure can be thought of as a
penalty term corresponding to the constraints in the Fiedler
formulation. We will refer to this formulation as the
\emph{regularized permutahedron-based convex formulation}.

Vector-based regularization can also be incorporated into the
Birkhoff-based convex formulation. Consider the following corollary of
Proposition~\ref{prop:perm_norm} that shows that instead of maximizing
the $\| P\Pi \|_2^2$ term in formulation \eqref{eqn:fogel_full} to
force the solution to be closer to a permutation, we could maximize
$\| P \Pi Y \|_2^2$:
\begin{corollary} \label{cor:perm_norm}
  Let $Y \in \bR^{n \times p}$ be a matrix where every column has no
  repeated elements, and consider the set $X = \{P \Pi Y \:|\: \Pi \in
  \cB^n \}$ where $P=I - \frac{1}{n} \ones \ones^T$. The elements with
  the highest Frobenius norm in $X$ are given by the set $\{ P \Pi Y
  \:|\: \Pi \text{ is a permutation matrix} \}$.
\end{corollary}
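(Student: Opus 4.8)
The plan is to reduce the matrix/Frobenius statement to the vector/$\ell_2$ statement of Proposition~\ref{prop:perm_norm} by exploiting the fact that the squared Frobenius norm splits across columns. Writing $Y_1,\dotsc,Y_p$ for the columns of $Y$, we have $\|P\Pi Y\|_F^2 = \sum_{j=1}^p \|P\Pi Y_j\|_2^2$, so the task is to maximize a sum of per-column terms over $\Pi \in \cB^n$. The key observation I would establish first is that $P$ and $\Pi$ commute for every doubly-stochastic $\Pi$: since $\ones^T\Pi = \ones^T$ and $\Pi\ones = \ones$, both $P\Pi$ and $\Pi P$ equal $\Pi - \frac1n\ones\ones^T$. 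Hence $P\Pi Y_j = \Pi(PY_j)$, and as $\Pi$ ranges over $\cB^n$ the vector $\Pi(PY_j)$ ranges, by Birkhoff--von Neumann, exactly over the convex hull of the permutations of the centered column $PY_j$.

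Next I would apply Proposition~\ref{prop:perm_norm} with $p=2$ to each centered column. Because $Y_j$ has distinct entries, so does $PY_j = Y_j - \frac1n(\ones^TY_j)\ones$ (subtracting a constant preserves distinctness), so the proposition applies and gives $\|P\Pi Y_j\|_2^2 \le \|PY_j\|_2^2 =: M_j$, with equality if and only if $\Pi(PY_j)$ is one of the permutations of $PY_j$. Summing over $j$ yields the global bound $\|P\Pi Y\|_F^2 \le \sum_j M_j =: M^2$ for all $\Pi \in \cB^n$.

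It remains to identify the maximizers with $S := \{P\Pi Y \mid \Pi \text{ a permutation matrix}\}$. For the reverse inclusion, a single permutation matrix $\Pi$ sends every centered column $PY_j$ to a permutation of itself simultaneously, so each per-column term attains its bound $M_j$ and the sum attains $M^2$; thus every element of $S$ is a maximizer (and, incidentally, all elements of $S$ share the same Frobenius norm $M$). For the forward inclusion, suppose $Z = P\Pi Y$ attains $\|Z\|_F^2 = M^2$; then each term must equal $M_j$, so in particular $\Pi(PY_1)$ is a permutation of the distinct-entried vector $PY_1$. The crux of the argument --- and the step I expect to be the main obstacle --- is to show that this forces $\Pi$ itself to be a permutation matrix. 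I would argue that the preimage, under the linear map $\Pi \mapsto \Pi(PY_1)$, of a vertex of the convex hull of the permutations of $PY_1$ is a face of $\cB^n$ (it is where a suitable linear functional is maximized over $\cB^n$), and that this face has a single vertex, namely the unique permutation matrix realizing that vertex, uniqueness following from distinctness of the entries of $PY_1$. A bounded nonempty polytope with one vertex is a single point, so $\Pi$ must equal that permutation matrix, giving $Z \in S$ and completing the characterization.
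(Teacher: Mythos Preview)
Your proof is correct and follows essentially the same route as the paper's: decompose the squared Frobenius norm column-wise, use $P\Pi=\Pi P$ to identify each column's range with the convex hull of the permutations of $PY_j$, and then apply Proposition~\ref{prop:perm_norm}. The paper's write-up is terser---it simply asserts that distinctness of the entries of $y_j$ forces $\Pi$ to be a permutation matrix---whereas your face-of-$\cB^n$ argument carefully justifies that last implication, which is a welcome addition.
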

The vector-regularized version of \eqref{eqn:fogel_basic} with side
constraints can be written as follows:
\begin{align} \label{eqn:fogel_full_vec}
  \min_{\Pi \in \cB^n}\:\: & \frac{1}{p} 
    \Tr( Y^T \Pi^T (L_A - \mu P) \Pi Y) 
  \quad\st\quad D\Pi \pi_I \leq \delta.
\end{align}
We refer to this formulation as the \emph{vector-regularized
  Birkhoff-based convex formulation}.  We note that when we let $Y =
\pi_I = (1,2,\dotsc,n)^T$, the solution of the regularized
permutahedron-based convex formulation and the optimal value
$\Pi^\ast\pi_I$ (where $\Pi^\ast$ is the solution of
\eqref{eqn:fogel_full_vec}) are the same. Hence, we can view the
regularized permutahedron-based convex formulation as a compact way of
encoding the special case of \eqref{eqn:fogel_full_vec} for which $p =
1$.

Vector-based regularization is in some sense more natural than the
regularization in \eqref{eqn:fogel_full}. It acts directly on the set
that we are optimizing over, rather than an expanded set. (Different
elements $\Pi_1$ and $\Pi_2$ of the Birkhoff polytope may yield the
same permutation vector: $\Pi_1 \pi_I = \Pi_2 \pi_I$.) In addition,
the objective of \eqref{eqn:fogel_full_vec} remains convex provided
that $\mu < \lambda_2(L_A)$, a significantly looser constraint that
for the matrix-based regularization scheme, allowing for stronger
regularization.

The regularized permutahedron-based convex formulation is a convex QP
with $O(m)$ variables and constraints, where $m$ is the number of
comparators in its sorting network, while the Birkhoff-based one is a
convex QP with $O(n^2)$ variables. In addition, the use of the 
vector-based regularization scheme in formulations 
\eqref{eqn:perm_full} and \eqref{eqn:fogel_full_vec} allows standard
convex QP solvers (such as those based on interior-point methods) to
be applied to both formulations. The one feature in the Birkhoff-based
formulations that the permutahedron-based formulations do not have is
the ability to average the solution over multiple vectors by choosing
dimension $p > 1$ for the matrix $Y \in \bR^{n \times p}$. However,
our experiments suggested that the best solutions were obtained for $p
= 1$, so this consideration was not important in practice.

\section{Key Implementation Issues} \label{sec:practical}

Having described the permutahedron-based convex formulation, we now
discuss the important choices to be made in constructing the
relaxation and in extracting a suitable permutation from the solution
of the relaxation. We also briefly discuss algorithms for solving
these formulations, and possible strengthening of the formulations.

\paragraph{Choice of Sorting Network.}

There are numerous possible choices of the sorting network, from which
the constraints in formulation (\ref{eqn:perm_full}) are derived. The
asymptotically most compact option is the AKS sorting network, which
contains $\Theta(n \log n)$ comparators. This network was introduced
in \cite{Ajtai1983} and subsequently improved by others, but is
impractical because of its difficulty of construction and the large
constant factor in the complexity expression. We opt instead for more
elegant networks with slightly worse asymptotic complexity.
Batcher~\cite{Batcher1968} introduced two sorting networks with
$\Theta(n \log^2 n)$ size --- the odd-even sorting network and the
bitonic sorting network --- that are popular and
practical. Generation of the constraints that describe the sorting
network polytope can be performed with a simple recursive algorithm
that runs in $\Theta(n \log^2 n)$ time. The bitonic sorting network
gave good performance in our implementations.

\paragraph{Obtaining Permutations from a Point in the Permutahedron.} 

Solution of the permutation-based relaxation yields a point $x$ in the
permutahedron, but we need techniques to convert this point into a
valid permutation, which is a candidate solution for the 2-SUM problem
\eqref{eqn:2sum}.  The most obvious recovery technique is to choose
this permutation $\pi$ to have the same ordering as the elements of
$x$, that is, $x_i < x_j$ implies $\pi(i) < \pi(j)$, for all $i,j \in
\{1,2,\dotsc,n\}$. We could also sample multiple permutations, by
applying Gaussian noise to the components of $x$, prior to taking the
ordering to produce $\pi$. (In our experiments we added i.i.d.
Gaussian noise with variance $0.5$ to each element of $x$ before
ordering.) The 2-SUM objective \eqref{eqn:2sum} can be evaluated for
each permutation so obtained, with the best one being reported as the
overall solution. This randomized recovery procedure can be repeated
many times, as it is quite inexpensive. Fogel et~al.~\cite{Fogel2013}
propose a somewhat different permutation sampling procedure for
matrices $\Pi$ in the Birkhoff polytope, obtaining a permutation by
sorting the vector $\Pi v$, where $v$ is a sorted random vector. We
experimented too with decomposition-based methods, in which $x$ is
expressed as a convex combination of permutations $\sum_{i=1}^{n+1}
\lambda_i \pi^i$, a decomposition that can be found efficiently using
an optimal $O(n^2)$ algorithm \cite{Yasutake2011}. We evaluated some
or all of the spanning permutations by treating the permutations as
permutation matrices and applying the sampling procedure from
\cite{Fogel2013}, but this approach yielded weaker solutions in
general.

\paragraph{Solving the Convex Formulation.}
On our test machine using the Gurobi interior point solver, we were
able to solve instances of the permutahedron-based convex formulation
(\ref{eqn:perm_full}) of size up to around $n=10000$. As in
\cite{Fogel2013}, first-order methods can be employed when the scale
is larger. 
%When using conditional gradient methods (first introduced
%by Frank and Wolfe \cite{Frank1956}), the objective is a function of
%only the input coordinates $x^\text{in}$ of the solution and hence the
%same is true for the gradient. This means that performing the
%minimization step in the conditional gradient method can be done by a
%two-step process: (1) minimizing a linear function over the $n$
%variables, $x^\text{in}$, subject to them being in the permutahedron
%with any additional ordering constraints, and (2) deriving
%$x^\text{rest}$ from $x^\text{in}$. If the solution for $x^\text{in}$
%returned in the first step is an actual permutation, the second step
%can be done by iterating over the comparators in order, setting the
%outputs of each comparator the values that would be given by an actual
%sorting network.  
%In Appendix~\ref{sec:appendix_linear}, we provide an
%optimal $O(n \log n)$ algorithm for step (1), in the case in which
%only the tiebreaking constraint is present, with no additional
%ordering constraints.

\paragraph{Strengthening Side Structural Information.}

The constraint set for our permutation-based relaxation is a
polyhedron defined by the sorting network constraints and the side
constraints. A much stronger relaxation would be obtained if we could
identify the feasible points that correspond to actual permutations,
and optimize over the convex hull of these points. (As an example, the
set of two constraints $x_1 + 1 \leq x_2$ and $x_1 + 1 \leq x_3$ over
the 3-permutahedron is satisfied by only two permutations.) However,
identifying this convex hull is computationally difficult in general.
For example, the convex hull of the set of permutations that satisfy a
set of constraints of the form $x_i + 1 \leq x_j$ corresponds
precisely to the``Single Machine Scheduling Polytope with Scheduling
Constraints and Unit Costs'', and since it is $NP$-hard to optimize a
linear function over this polytope, computing a linear description is
also difficult. Many sets of valid inequalities have been derived for
the corresponding scheduling polytope (see \cite{Queyranne94}), and it
will be interesting future work to understand the trade-offs between
the tighter solutions that could be obtained by incorporating these
valid inequalities and the run time from computing these inequalities.

\section{Experiments} \label{sec:exp}

We compare the run time and solution quality of algorithms on the two
classes of convex formulations --- Birkhoff-based and
permutahedron-based --- with various parameters. Summary results are
presented in this section, with additional results appearing in
Appendix~\ref{sec:additional_experiments}.

\subsection{Experimental Setup} 

\label{sec:setup}

The experiments were run on an Intel Xeon X5650 (24 cores @ 2.66Ghz)
server with 128GB of RAM in MATLAB 7.13, CVX 2.0
(\cite{cvx},\cite{gb08}), and Gurobi 5.5 \cite{gurobi}. We tested five
formulation-algorithm-implementation variants, as follows.
\begin{enumerate}
\item Spectral method using the MATLAB {\tt eigs} function.
\item Frank-Wolfe algorithm \cite{Frank1956} on the Birkhoff-based
  convex formulations using MATLAB/CVX/Gurobi to solve the linear
  subproblems.
\item MATLAB/Gurobi on the permutahedron-based convex formulation.
\item MATLAB/Gurobi on the Birkhoff-based convex formulation with $p =
  1$ (that is, \eqref{eqn:fogel_full} with $Y = \pi_I$).
\item Experimental MATLAB code provided to us by the authors of
  \cite{Fogel2013} implementing FISTA, for solving the
  matrix-regularized Birkhoff-based convex formulation
  \eqref{eqn:fogel_full}, with projection steps solved using block
  coordinate ascent on the dual problem. This is the current
  state-of-the-art algorithm for large instances of the Birkhoff-based
  convex formulation; we refer to it as RQPS (for ``Regularized QP for
  Seriation'').
\end{enumerate}

We report run time data using wall clock time reported by Gurobi, and
MATLAB timings for RQPS, excluding all preprocessing time.

We used the bitonic sorting network by Batcher~\cite{Batcher1968} for
experiments with the permutahedron-based formulation. For consistency
between approaches, we used the procedure described in
Section~\ref{sec:practical} for collapsing relaxed solutions to
permutations in all cases. For Birkhoff matrices, this means that we
applied random Gaussian perturbations (drawn i.i.d.\ from $N(0,0.5)$)
to elements of the vector $\Pi^* \pi_I$, where $\Pi^*$ is the solution
of the Birkhoff relaxation. We picked the default parameter settings
for the Gurobi interior-point solver except for the convergence
tolerance, which is experiment-dependent. We provide more details on
algorithm parameters specific to the different variants below.

We vary the number of ordering constraints added for each experiment.
Each ordering constraint is of the form $x_i + \pi(j) - \pi(i) \leq
x_j$, where $\pi$ is the (known) permutation that recovers the
original matrix, and $i,j \in [n]$ is a pair randomly chosen but
satisfying $\pi(j) - \pi(i) > 0$. 

Besides run time, we used three other metrics to measure performance:
\begin{itemize}
\item the 2-SUM objective value of recovered matrix,
\item the total number of R-matrix constraints of the form $a_{ij}
  \leq a_{(i-1)j}$ or $a_{ij} \leq a_{i(j+1)}$ that are violated by
  the recovered matrix, and 
\item Kendall's $\tau$ score to measure how close the permutation
  obtained is to the permutation that recovers the original matrix.
\end{itemize} 

\subsection{Munsingen Dataset} 

We test the solution quality of the permutahedron-based formulations
on a standard problem in the seriation literature drawn from
archaeology. The Munsingen dataset, introduced by Hodson
\cite{Hodson1968} and manually re-ordered in \cite{Kendall1971}, was
used as a test problem in \cite{Fogel2013}.  It consists of a binary
matrix $M$ indicating the presence of each of 70 artifact types on 59
graves, where each artifact is presumed to be associated with a
particular time period. The goal is to order the graves by date. We
treat this matrix as a noisy consecutive-ones problem and minimize the
2-SUM objective over $MM^T$.

\begin{figure}[hbtp] 
  \begin{center}
    \begin{tabular}{ccc}
      \parbox[c]{56mm}{\includegraphics[width=45mm]{./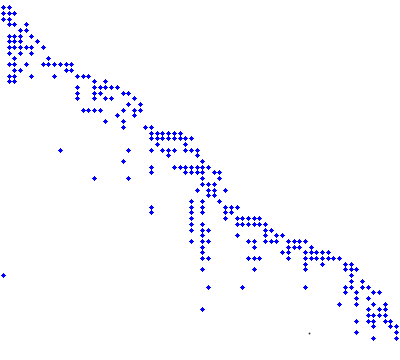}}
      \parbox[c]{56mm}{\includegraphics[width=45mm]{./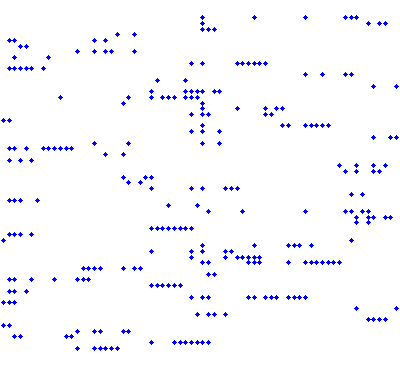}}
      \parbox[c]{56mm}{\includegraphics[width=45mm]{./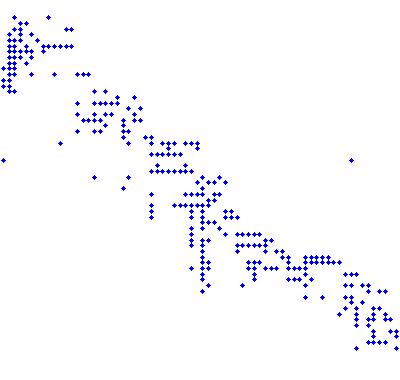}}
    \end{tabular}
  \end{center}
  \caption{Munsingen Dataset. Plot of the Munsingen data matrix $M$
    (left), the matrix with rows permuted randomly (center), and the
    matrix reordered according to the solution of the
    permutahedron-based convex formulation with 15 randomly chosen
    ordering constraints (right).  \label{fig:munsingen_illustrated}}
\end{figure}

Since we are interested in solution quality (rather than run time
performance) for this data set, we used the same algorithm across all
the different convex formulations, for consistency: the Frank-Wolfe
algorithm with step-size ${2}/{(k + 2)}$ (where $k$ is the iteration
number), terminating when the relative duality gap is less than one
percent. For the regularized permutahedron-based formulation, since it
gives the same solution as the vector-regularized Birkhoff-based
formulation when $p=1$, we simply apply the Frank-Wolfe algorithm to
the latter formulation. We checked that the solutions to the
permutahedron-based formulations obtained indirectly in this manner
and directly using the Gurobi interior-point solver on the
permutahedron-based formulations were similar in quality. We varied
the Birkhoff-based convex formulations in three ways. First, we chose
the $n \times p$ matrix $Y$ in \eqref{eqn:fogel_full} and
\eqref{eqn:fogel_full_vec} to have $p=n$ and $p=4n$ columns (we chose
$p \geq n$ to enable application of the matrix-based regularization
scheme), with each column in $Y$ chosen by sorting a vector whose
entries are independent uniformly distributed random variables in
$[0,1]$. (These choices of $p$ and $Y$ are consistent with those used
in \cite{Fogel2013}.)  Second, we tried both matrix and vector
regularization schemes (\eqref{eqn:fogel_full} and
\eqref{eqn:fogel_full_vec}, respectively).  Third, we varied the
regularization parameter to be $0\%$, $50\%$ and $90\%$ of the maximum
allowed for each respective scheme.

Table~\ref{tbl:munsingen_15} displays the results, averaged over ten
runs of each variant, each with a different randomly chosen set of 15
ordering constraints. The formulations with vector-based
regularization consistently outperform formulations without
regularization or with matrix-based regularization, both in the 2-SUM
objective and the R-score. The best 2-SUM objectives were obtained for
the permutahedron-based formulation. Using $p>1$ in the Birkhoff
formulation did not help. The spectral method, which could not make
use of any side information about the ordering, did not give
competitive results. Results obtained by using 38 ordering constraints
(instead of 15) are similar; we report these in
Appendix~\ref{sec:additional_experiments}.

\begin{table}[t] 
  \centering
  {
  \begin{tabular}{|c|c|cc|cc|cc|cc|}
    \hline
    Method & $p$ & Reg. Type & Level & 2-SUM & Std Err & R-score & Std Err & Kendall's $\tau$ & Std Err \\
    \hline
    Input &&&         & 77040 & 0 & 289.0 & 0.0 & 1.000 & 0.000 \\
    Spectral &&&      & 77806 & 0 & 295.0 & 0.0 & 0.755 & 0.001 \\
    \hline
    Permut. & $-$ & none   & $-$    & 72798 & 6008 & 306.6 & 14.9 & 0.863 & 0.016 \\
    Permut. & $-$ & vector & $50\%$ & 70515 & 5828 & 309.4 & 21.1 & 0.862 & 0.017 \\
    Permut. & $-$ & vector & $90\%$ & 69336 & 5422 & 302.8 & 22.5 & 0.867 & 0.016 \\
    \hline
    Birkhoff & $n$ & none & $-$      & 74111 & 6966 & 307.3 & 33.6 & 0.859 & 0.014 \\
    Birkhoff & $n$ & matrix & $50\%$ & 73718 & 6489 & 306.2 & 18.8 & 0.860 & 0.015 \\
    Birkhoff & $n$ & matrix & $90\%$ & 73810 & 6874 & 313.0 & 21.9 & 0.857 & 0.014 \\
    Birkhoff & $n$ & vector & $50\%$ & 71623 & 6100 & 297.6 & 16.5 & 0.860 & 0.019 \\
    Birkhoff & $n$ & vector & $90\%$ & 69898 & 5212 & 299.9 & 15.9 & 0.867 & 0.016 \\
    \hline
    Birkhoff & $4n$ & none   & $-$    & 73257 & 6713 & 311.6 & 16.7 & 0.856 & 0.015 \\
    Birkhoff & $4n$ & matrix & $50\%$ & 73624 & 6632 & 306.3 & 14.6 & 0.859 & 0.017 \\
    Birkhoff & $4n$ & matrix & $90\%$ & 73667 & 6589 & 305.9 & 21.5 & 0.858 & 0.013 \\
    Birkhoff & $4n$ & vector & $50\%$ & 70827 & 5582 & 297.4 & 21.9 & 0.862 & 0.009 \\
    Birkhoff & $4n$ & vector & $90\%$ & 69490 & 4927 & 291.8 & 15.9 & 0.868 & 0.016 \\
    \hline
  \end{tabular}
  }
  \caption{Munsingen Dataset: Performance with 15 ordering constraints.}
  \label{tbl:munsingen_15}
\end{table}

\subsection{Linear Markov Chain} 

The Markov chain reordering problem \cite{Fogel2013} involves
recovering the ordering of a simple Markov chain with Gaussian noise
from disordered samples. The Markov chain consists of random variables
$X_1, X_2, \dotsc, X_n$ such that $X_i = b X_{i-1} + \epsilon_i$ where
$\epsilon_i \sim N(0, \sigma^2)$. A sample covariance matrix taken
over multiple independent samples of the Markov chain is used as the
similarity matrix in the 2-SUM problem.

\begin{figure}[thbp]
  \begin{center}
    \begin{tabular}{ccc}
      \parbox[c]{56mm}{\includegraphics[width=45mm]{./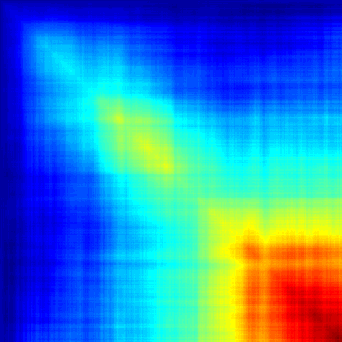}}
      \parbox[c]{56mm}{\includegraphics[width=45mm]{./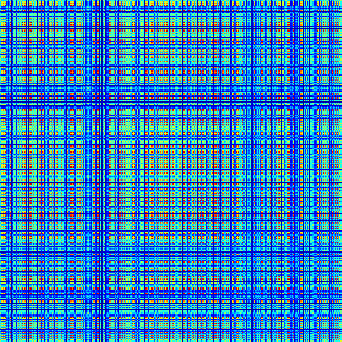}}
      \parbox[c]{56mm}{\includegraphics[width=45mm]{./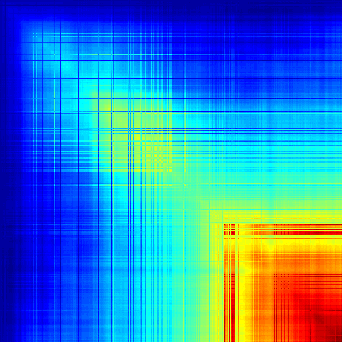}}
    \end{tabular}
  \end{center}
  \caption{Linear Markov Chain covariances.  Plot of a covariance
    matrix corresponding to 50 samples from a length 500 Markov chain
    (left), the same covariance matrix with rows and columns randomly
    permuted (center), and the reordered matrix obtained by solving
    the permutahedron-based convex formulation with 750 randomly
    chosen ordering constraints (right).
  \label{fig:markov_illustrated}}
\end{figure}

We use this problem for two different comparisons. First, we compare
the solution quality and running time of RQPS algorithm of
\cite{Fogel2013} with the Gurobi interior-point solver on the
regularized permutahedron-based convex formulation, to demonstrate the
performance of the formulation and algorithm introduced in this paper
compared with the prior state of the art. Second, we apply Gurobi to
both the permutahedron-based and Birkhoff-based formulations with $p =
1$, with the goal of discovering which formulation is more efficient in
practice.

For both sets of experiments, we fixed $b = 0.999$ and $\sigma=0.5$
and generate 50 sample sample chains to form a sample covariance
matrix.  We chose $n \in \{ 500, 2000, 5000 \}$ to test the scaling of
algorithm performance with dimension. For each $n$, we perform 10
independent runs, each based on a different set of samples of the
Markov chain (and hence a different sample covariance matrix).  Three
levels of side constraints were used --- $0.5n$, $n$, and $1.5n$ ---
chosen as described in Subsection~\ref{sec:setup}.
On initial tests, we observed that the choice of regularization scheme
did not significantly affect the performance, so we chose to use
vector-based regularization with parameter $\mu = 0.9\lambda_2(L_A)$
on all formulations throughout these two sets of experiments.

\paragraph{RQPS and the Permutahedron-Based Formulation.}

We now compare the RQPS code for the matrix-regularized Birkhoff-based
convex formulation \eqref{eqn:fogel_full} to the regularized
permutahedron-based convex formulation, solved with Gurobi. We fixed a
time limit (which differed according to the value of $n$) and ran the
RQPS algorithm until the limit was reached. At fixed time intervals,
we query the current solution and sample permutations from that point,
using our randomized method described above.

Within the block-coordinate-ascent projection step in the RQPS method,
we set a tolerance of 0.001 for the relative primal-dual gap and
capped the maximum number of iterations of the primal-dual algorithm
to either 10 or 100. We observed that the projection step can be the
most costly part of an RQPS iteration, and an imprecise projection can
often be sufficient to give good performance (though there is
apparently no rigorous theory to guarantee convergence in the presence
of an inexact projection calculation). Ten iterations in the
projection step usually yielded reasonable performance; the algorithm
found a reasonable solution quickly overall, though in some cases the
solution quality fluctuates markedly over time.  With a 100-iteration
limit in the projection subproblem, there is less fluctuation in 2-SUM
value over the course of iterations, but the time per FISTA iteration
increases significantly. (A cap of fewer than 10 yields erratic
convergence behavior, while greater than 100 is too slow.)

In applying Gurobi's interior-point solver to the permutahedron-based
formulation, we set a relative tolerance of 5\% and applied the
randomized procedure described above to recover a permutation from the
final point. We observed that use of a loose tolerance does not
usually degrade the solution quality by more than a percent or two
over results obtained with a tight tolerance, and avoids numerical
instabilities in the interior-point methods.

\begin{figure}[hbtp]
  \begin{center}
    \begin{tabular}{ccc}
      \multicolumn{1}{c}{$n=500$, run 3} \\
      \parbox[c]{52mm}{\includegraphics[width=57mm]{./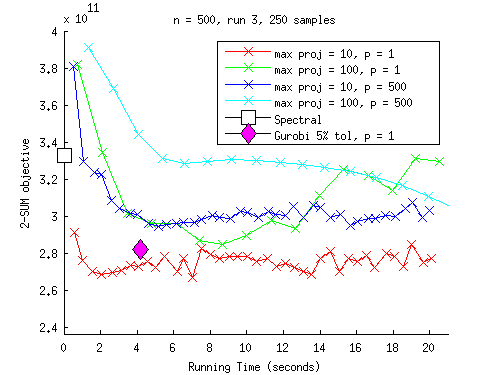}}
      \parbox[c]{52mm}{\includegraphics[width=57mm]{./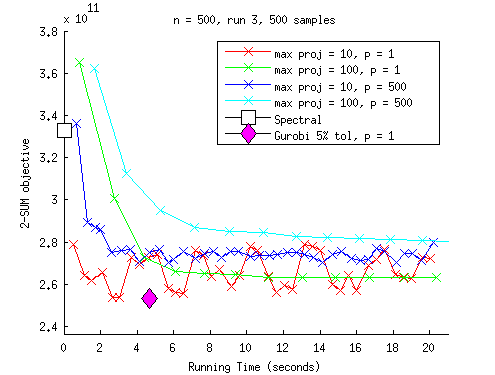}}
      \parbox[c]{55mm}{\includegraphics[width=57mm]{./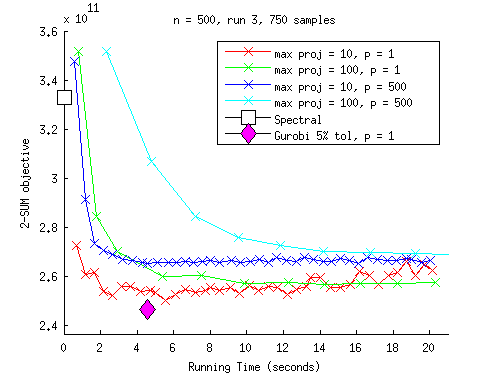}}\\ \\ \\
      \multicolumn{1}{c}{$n=2000$, run 7} \\
      \parbox[c]{52mm}{\includegraphics[width=57mm]{./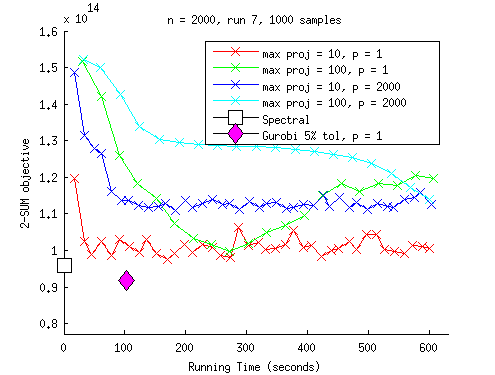}}
      \parbox[c]{52mm}{\includegraphics[width=57mm]{./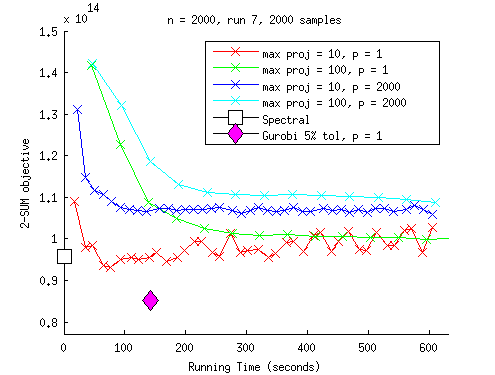}}
      \parbox[c]{55mm}{\includegraphics[width=57mm]{./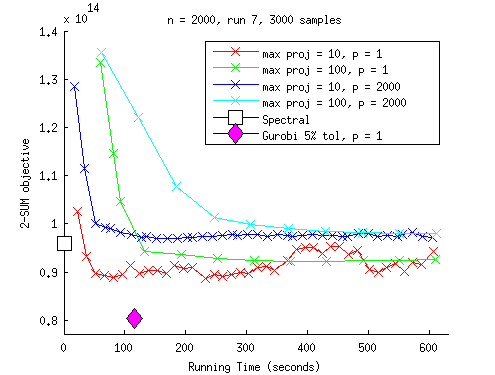}}\\ \\ \\ 
      \multicolumn{1}{c}{$n=5000$, run 8} \\
      \parbox[c]{52mm}{\includegraphics[width=57mm]{./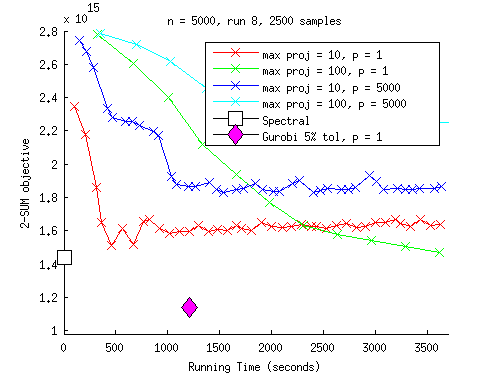}}
      \parbox[c]{52mm}{\includegraphics[width=57mm]{./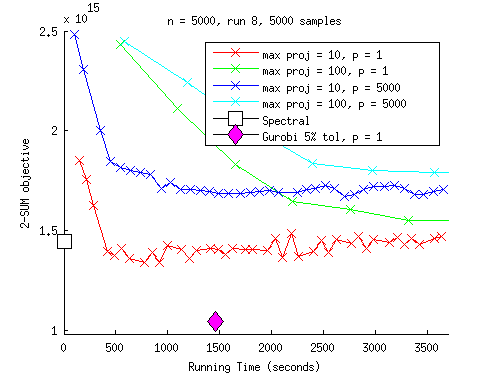}}
      \parbox[c]{55mm}{\includegraphics[width=57mm]{./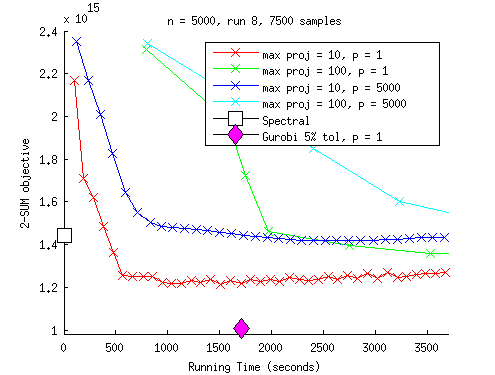}}
    \end{tabular}
  \end{center}
  \caption{Linear Markov Chain Example. Plot of 2-SUM objective over
    time (in seconds) for $n \in \{ 500, 2000, 5000\}$. The red,
    green, blue, and cyan curves represent performance of the RQPS
    code for varying values $p$ and the cap on the maximum number of
    iterations for the projection step. The white square represents
    the spectral solution, and the magenta diamond represents the
    solution returned by Gurobi for the permutahedron-based
    formulation. The horizontal axis in each graph is positioned at
    the 2-SUM objective of the identity permutation on the sample
    covariance matrix.
    \label{fig:markov_rqps}} 
\end{figure}

Figure~\ref{fig:markov_rqps} shows results corresponding to the three
different values of $n$, each row of plots corresponding to $n=500$,
$n=2000$, and $n=5000$.  For each $n$, we choose the run (out of ten)
that shows the best results for RQPS relative to the interior-point
algorithm for the regularized permutahedron-based formulation, and
report remaining runs in Appendix~\ref{sec:additional_experiments}.
Each column of plots corresponds to a different number of side
constraints: $n/2$, $n$, and $3n/2$, as discussed above.  We report
results for four different variants of RQPS, differing according to
the cap on iterations in the projection step and to the value of $p$
in \eqref{eqn:fogel_full}, as follows:
\begin{itemize}
  \item maximum 10 iterations in the projection, with $p=1$;
  \item maximum 100 iterations in the projection, with $p=1$;
  \item maximum 10 iterations in the projection, with $p=n$;
  \item maximum 100 iterations in the projection, with $p=n$.
\end{itemize}
We also report results obtained for the permutahedron-based
formulation and for the spectral formulation (which cannot use side
constraints).

The plots show the quality of solution produced by the various methods
(as measured by the 2-SUM objective on the vertical axis) vs run time
(horizontal axis). For RQPS, with a cap of $10$ iterations within each
projection step, the objective tends to descend to a certain level
rapidly, but then proceeds to fluctuate around that level for the rest
of the running time, or sometimes gets worse.  For a cap of 100
iterations, there is less fluctuation in 2-SUM value, but it takes
some time to produce a solution as good as the previous case. Contrary
to experience reported in \cite{Fogel2013}, values of $p$ greater than
$1$ do not seem to help; our runs for $p=n$ plateaued at higher values
of the 2-SUM objective than those with $p=1$.

We now compare the RQPS results to those obtained with the regularized
permutahedron-based formulation. In most cases, the latter formulation
gives a better solution value, but there are occasional exceptions to
this rule. For example, in the third run for $n = 500$ (the top left
plot in Figure~\ref{fig:markov_rqps}), one variant of RQPS converges
to a solution that is significantly better. Despite its very fast
run times, the spectral method does not yield solutions of competitive
quality, due to not being able to make use of the side constraint
information.

\paragraph{Direct Comparison of Birkhoff and Permutahedron Formulations}

For the second set of experiments, we compare the convergence rate of
the objective value in the Gurobi interior-point solver applied to two
equivalent formulations: the vector-regularized Birkhoff-based convex
formulation \eqref{eqn:fogel_full_vec} with $p=1$ and the regularized
permutahedron-based convex formulation \eqref{eqn:perm_full}. For each
choice of input matrix and sampled ordering information, we first
solved each formulation to within a 0.1\% tolerance (or until reaching
a preset time limit) to obtain a baseline objective $\overline{v}$,
then ran the Gurobi interior-point method for each formulation. At
each iteration, we plot the difference between the primal objective
and $\overline{v}$.

\begin{figure}[hbtp]
  \begin{center}
    \begin{tabular}{ccc}
      \multicolumn{1}{c}{$n=2000$, run 8} \\
      \parbox[c]{52mm}{\includegraphics[width=57mm]{./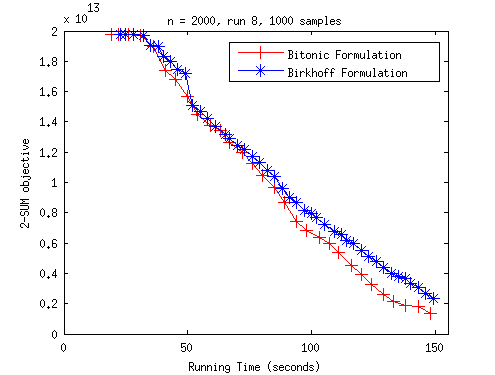}}
      \parbox[c]{52mm}{\includegraphics[width=57mm]{./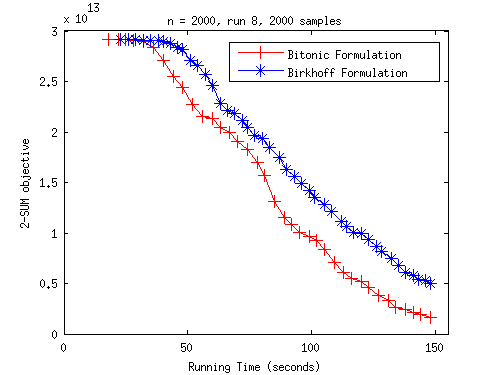}}
      \parbox[c]{55mm}{\includegraphics[width=57mm]{./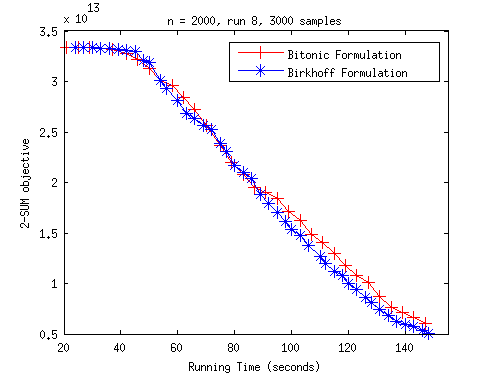}}\\ \\
      \multicolumn{1}{c}{$n=5000$, run 1} \\
      \parbox[c]{52mm}{\includegraphics[width=57mm]{./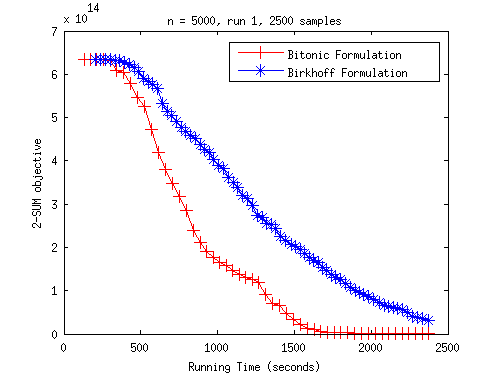}}
      \parbox[c]{52mm}{\includegraphics[width=57mm]{./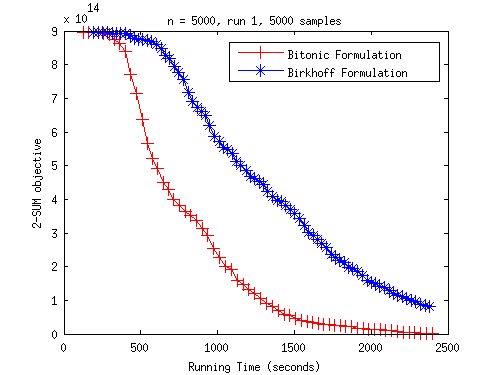}}
      \parbox[c]{55mm}{\includegraphics[width=57mm]{./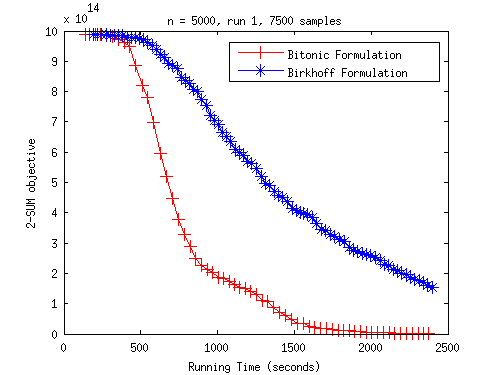}}
    \end{tabular}
  \end{center}
  \caption{Linear Markov Chain Example. Plot of the difference of the
    2-SUM objective from the baseline objective over time (in seconds)
    for $n \in \{ 2000, 5000\}$. The red curve represents performance
    of the permutahedron-based formulation; the blue curve represents
    the performance of the Birkhoff-based formulation. For each value
    of $n$, we display the best run (out of ten) for the
    Birkhoff-based formulation.}
  \label{fig:markov_gurobi}
\end{figure}

Figure~\ref{fig:markov_gurobi} shows the results for the best run (out
of ten) for the Birkhoff-based formulation relative to the
permutahedron-based formulation for $n \in \{2000, 5000\}$.  (Results
for $n = 500$ are omitted because we could not obtain accurate timing
information for short run times.) For $n=2000$, the permutahedron-based
formulation usually converges faster, but for the best run for the
Birkhoff-based formulation they have similar performance.  However,
once we scale up to $n=5000$, the permutahedron-based formulation
converges significantly faster in all tests.

Our comparisons show that the permutahedron-based formulation tends to
yield better solutions in faster times than Birkhoff-based
formulations, regardless of which algorithm is used to solve the
latter. The advantage of the permutahedron-based formulation is more
pronounced when $n$ is large.

\subsection{Additional Empirical Observations} 

We omitted results from the convex formulations when there was no
sampled information, since we have observed that this could lead to
inconsistent results. In general, we have noticed that in many of
those cases the value of the obtained point on the permutahedron lies
in a narrow range, and the randomness inherent in the procedure of
sampling a permutation may be a major factor in determining the
solution quality.

\section{Conclusions and Future Work}
  
In this paper, we bring Goemans' compact description of the
permutahedron into the area of convex optimization, showing that it
can be used to construct a convex relaxation of the 2-SUM problem
problem as introduced by \cite{Fogel2013} --- an optimization in which
the variable is a permutation of $n$ objects.  Enhancements introduced
in the Birkhoff-based formulations can also be applied to the
permutahedron-based formulations. We introduce a new, simpler
regularization scheme that gives solutions of the relaxation that can
be turned into better candidate solutions for the underlying problem.
We also introduce a simple randomized scheme for recovering
permutations from solutions of the relaxation. Empirical results show
that the regularized permutahedron-based formulation gives the best
objective values and converges more rapidly than algorithms applied to
the Birkhoff-based formulation when $n$ is large. Given that the
permutation-based formulation requires only $\Theta(n \log^2 n)$
variables and constraints, whereas the Birkhoff-based formulation
requires $\Theta(n^2)$, the advantage of the permutahedron-based
scheme should continue to grow as $n$ increases.

We hope that this paper spurs further interest in using sorting
networks in the context of other more general classes of permutation
problems, such as graph matching or ranking. A direct adaptation of
this approach is inadequate, since the permutahedron does not uniquely
describe a convex combination of permutations, which is how the
Birkhoff polytope is used in many such problems. However, when the
permutation problem has a solution in the Birkhoff polytope that is
close to an actual permutation, we should expect that the loss of
information when projecting this point in the Birkhoff polytope to the
permutahedron to be insignificant.

\section{Acknowledgements}
We thank Okan Akalin and Taedong Kim for helpful comments and
suggestions for the experiments. We thank the anonymous referees for
feedback that improved the paper's presentation. We also thank the
authors of \cite{Fogel2013} for sharing their experimental code, and
Fajwal Fogel for helpful discussions. 

Lim's work on this project was supported in part by NSF Awards
DMS-0914524 and DMS-1216318, and a grant from ExxonMobil. Wright's
work was supported in part by NSF Award DMS-1216318, ONR Award
N00014-13-1-0129, DOE Award DE-SC0002283, AFOSR Award
FA9550-13-1-0138, and Subcontract 3F-30222 from Argonne National
Laboratory.

{%\small
\bibliographystyle{IEEEtran}
\bibliography{permutahedron}
}

%%%%%%%%%%%%%%%%%%%%%%%%%%%%%%%%%%%%%%%%%%%%%%%%%%%%%%%%%%%%%%%%%%%%%%%%%%%%%%%
%%%%%%%%%%%%%%%%%%%%%%%%%%%%%%%%%%%%%%%%%%%%%%%%%%%%%%%%%%%%%%%%%%%%%%%%%%%%%%%
%%%%%%%%%%%%%%%%%%%%%%%%%%%%%%%%%%%%%%%%%%%%%%%%%%%%%%%%%%%%%%%%%%%%%%%%%%%%%%%
%%%
%%% SUPPLEMENTARY MATERIAL
%%%
%%%%%%%%%%%%%%%%%%%%%%%%%%%%%%%%%%%%%%%%%%%%%%%%%%%%%%%%%%%%%%%%%%%%%%%%%%%%%%%
%%%%%%%%%%%%%%%%%%%%%%%%%%%%%%%%%%%%%%%%%%%%%%%%%%%%%%%%%%%%%%%%%%%%%%%%%%%%%%%
%%%%%%%%%%%%%%%%%%%%%%%%%%%%%%%%%%%%%%%%%%%%%%%%%%%%%%%%%%%%%%%%%%%%%%%%%%%%%%%

\newpage

\appendix

\section{The Spectral Formulation is a Continuous Relaxation of the 2-SUM Problem}

\label{sec:sf2}

Consider the following problem:
\begin{align} \label{eqn:fiedler_z}
    \min_{x\in\Re^n}\:\:  x^T  L_A  x
    \quad\st\quad  x^T\ones=\frac{n(n+1)}{2}, \;\; \|x\|^2_2=\frac{n(n+1)(2n+1)}{6}.
\end{align} 
The set of permutations lies in the hyperplane defined by
$x^T\ones={n(n+1)}/{2}$, and each permutation has a $\ell_2$-norm of
$\sqrt{n(n+1)(2n+1)/6}$.  Hence, formulation (\ref{eqn:fiedler_z}) is
a continuous relaxation of the 2-SUM problem \eqref{eqn:2sumlap}. If
we shift variables to $z:= x- (({n+1})/{2})\ones$, the objective
becomes
\begin{align*}
    x^T L_A x = z^T L_A z + \left(\frac{n+1}{2}\right)^2 \ones^T L_A \ones = z^T
    L_A z
\end{align*} 
which follows from the fact that the $L_A \ones = 0$.  Note that
$z^T\ones = 0$ if and only if
\begin{align*} 
      x^T\ones = z^T\ones + n\frac{n+1}{2} =
     \frac{n(n+1)}{2},
\end{align*}
and if $z^T\ones = 0$ holds, then $\| z \|^2_2 =
\frac{n(n+1)(2n+1)}{6} - \frac{n(n+1)^2}{4}$ if and only if
\begin{align*}
    \| x \|^2_2 = \left\|z + \frac{n+1}{2}\ones \right\|^2_2
    = \| z \|^2_2 + 2\cdot z^T\ones + \frac{n(n+1)^2}{2} 
    = \| z \|^2_2 + 0 + \frac{n(n+1)^2}{4} = \frac{n(n+1)(2n+1)}{6}.
\end{align*}
This shows that formulations (\ref{eqn:fiedler}) and (\ref{eqn:fiedler_z}) are
equivalent up to constant factors, and hence (\ref{eqn:fiedler}) is also a
continuous relaxation of \eqref{eqn:2sumlap}, up to a constant factor.

\section{Proofs for Sections \ref{sec:permutahedron} and \ref{sec:convex}}

\label{app:proofs}

We provide an alternative proof of Theorem \ref{thm:goemans} and
provide proofs of the results stated when we introduced the convex
relaxations.

\subsection{Alternative Proof of Theorem \ref{thm:goemans}}
Suppose we have a monotonically-increasing vector $v$ and the set $P_v$, the
convex hull of all permutations of $v$. Let $\cSN_v^n$ denote the sorting
network polytope associated with the vector $v$ by replacing the variables on
the output wires by $x^\text{out}_i = v_i$ for all $i \in [n]$.

\begin{proof}
The sorting network ensures that the permutahedron $\cP_v^n$ is contained in
$\{ x^{\text{in}} \:|\; (x^{\text{in}},x^{\text{rest}}) \in \cSN_v^n \}$, and
since the set is convex it also contains the convex hull of the permutahedron.

We will now prove the other containment. Let $y = (y_1,\dotsc,y_n)$ denote the
values on the $n$ wires before a particular comparator $k$, and let $z =
(z_1,\dotsc,z_n)$ denote the values after. It suffices to prove that if $z$ is
in the convex hull of the permutahedron, then so is $y$. Let $a$ and $b$
denote the indices of the wires that are part of comparator $k$. Let $z'$ be
the point obtained by swapping the values of the $a$th and $b$th coordinates.
We have $z \in \cP_v^n$ by assumption, and $z' \in \cP^n_v$ since the
permutahedron is invariant under swapping of coordinates. The constraints
(\ref{eqn:comparator}) ensure that $y$ is a convex combination of points $z$
and $z'$, hence $y \in \cP_v^n$.
\end{proof}

\subsection{Proof of Proposition \ref{prop:perm_norm}}
\begin{proof}
  Every permutation of the vector $v$ has the same $\ell_p$-norm, and
  since the $\ell_p$-norm is strictly-convex for $1 < p < \infty$ and
  $X$ is defined by the convex hull of permutations of $v$, the
  proposition holds.
\end{proof}

\subsection{Proof of Corollary \ref{cor:perm_norm}}
\begin{proof}
  Let $y_1,y_2,\dotsc,y_p$ denote the columns of the matrix $Y$. Since
  \[
    \|P\Pi Y \|_2 = \sqrt{\sum_{i=1}^n \sum_{j=1}^p (P\Pi Y)_{ij}^2} =
    \sqrt{\sum_{j=1}^p \| P\Pi y_j \|^2},
  \]
  it suffices to pick the matrix $\Pi$ to maximize each $\| P\Pi y_j
  \|$ term. Since $P \Pi = \Pi P$, each set $\{ P\Pi y_j \:|\:
  \Pi \in \cB^n \} = \{ \Pi P y_j \:|\: \Pi \in \cB^n \}$ is the
  convex hull of all permutations of $Py_j$. This means that by
  proposition \ref{prop:perm_norm}, the norm $\| P\Pi y_j |$ is
  maximized by choosing $\Pi$ such that $\Pi P y_j$ is a permutation
  of $Py_j$. The entries in $y_j$ are unique so $\Pi$ has to be a
  permutation matrix.
\end{proof}

\section{Characterizing the $\cPH^n \cap \{x\in \bR^n  \:|\: 
x_1 + 1 \leq x_n \}$ polytope} 

\label{sec:tiebreak_characterization}

In this section, we will provide a characterization of the
permutahedron with tiebreaking constraint that will later be useful
in developing first-order algorithms. This characterization will
provide the intuition for why, in the absence of other structured
information, the tiebreaking constraint is the ``best'' single
constraint one can add to the permutahedron for the convex relaxations
we study.

\subsection{Preliminaries}
A simple characterization of the points in the permutahedron of the
permutahedron from the theory of majorization (see
\cite{Marshall2011}) will be useful for our proof.

\begin{lemma}
  A point $x \in \bR^n$ is in $\cPH^n$ if and only if the point $z$
  obtained by sorting the coordinate-wise values of $x$ in descending
  order satisfies the equations
  \begin{align} \label{eqn:z_leq}
    \sum_{i = 1} ^k  z_i \leq \sum_{i=1}^{k} (n + 1 -i)
  \end{align}
  for all $k \in [n]$ and
  \begin{align} \label{eqn:z_eq}
    \sum_{i=1}^n z_i = \frac{n(n+1)}{2}.
  \end{align}
\end{lemma}

\begin{proof}
  Any $x \in \cPH^n$ immediately satisfies the conditions on $z$ since
  the set of equations (\ref{eqn:z_leq}) and (\ref{eqn:z_eq})
  represent a subset of the equations defining the permutahedron. As
  for the other direction, consider $x$ and the corresponding $z$ that
  satisfies equations (\ref{eqn:z_leq}) and (\ref{eqn:z_eq}). Then,
  given $S \subseteq [n]$ we have
  \[
    \sum_{i \in S} x_i \leq \sum_{i = 1}^{|S|} z_i \leq 
      \sum_{i=1}^{|S|} (n + 1 - i).
  \]
  Since this holds for all subsets $S$, $x$ has to be in the
  permutahedron.
\end{proof}

\subsection{Characterization and Proof}
The permutahedron with tiebreaking constraint can be shown to be the
convex hull of all permutations $\pi$ such that $\pi(1) < \pi(n)$,
which is equivalent to the following theorem:
\begin{theorem} \label{thm:tiebreak_polytope}
  Every extreme point of $\cPH^n \cap \{x\in \bR^n  \:|\: x_1 + 1 \leq
  x_n \}$ is a permutation.
\end{theorem}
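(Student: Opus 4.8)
The plan is to classify the extreme points of $Q := \cPH^n \cap \{x \in \bR^n : x_1 + 1 \le x_n\}$ according to whether the tiebreaking constraint $x_1 + 1 \le x_n$ is active. First I would observe that if $x^*$ is an extreme point at which this constraint is slack, i.e.\ $x^*_1 + 1 < x^*_n$, then $Q$ and $\cPH^n$ coincide in a neighborhood of $x^*$, so $x^*$ is an extreme point of $\cPH^n$ and hence a permutation. The entire difficulty therefore concerns extreme points lying on the hyperplane $x_1 + 1 = x_n$, and the goal reduces to showing that this cutting hyperplane meets the $1$-skeleton of the permutahedron only at vertices, so that cutting creates no genuinely new (non-permutation) extreme point.

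Next I would argue by contradiction: suppose $x^*$ is an extreme point of $Q$ with $x^*_1 + 1 = x^*_n$ that is not a permutation. Then $x^*$ is not a vertex of $\cPH^n$, so it lies in the relative interior of a face $F$ of $\cPH^n$ of dimension at least $1$. Let $L$ be the linear space of directions parallel to the affine hull of $F$; since $x^*\in\mathrm{relint}(F)$, we have $x^* \pm \epsilon d \in \cPH^n$ for every $d \in L$ and all small $\epsilon > 0$. Feasibility for $Q$ in both directions additionally requires $d_n - d_1 = 0$, so extremality of $x^*$ in $Q$ forces $\{d \in L : d_n = d_1\} = \{0\}$. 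As this subspace has dimension at least $\dim L - 1$, we must have $\dim L = 1$ (so $F$ is an edge $E$) and the functional $d \mapsto d_n - d_1$ must be nonzero on $L$. In other words, $x^*$ would have to be a relative-interior point of an edge $E$ of $\cPH^n$ that crosses the hyperplane transversally.

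To finish I would rule this out using the explicit structure of edges. Every edge $E$ of $\cPH^n$ joins two permutations differing by swapping two adjacent values $k$ and $k+1$ at positions $p, q$; along $E$ one has $x_p + x_q = 2k+1$ with $x_p$ ranging over $[k, k+1]$, while every other coordinate is fixed at an integer value. Transversality means $x_n - x_1$ is non-constant on $E$, which immediately excludes the case $\{1,n\} \cap \{p,q\} = \emptyset$. If both $1, n \in \{p,q\}$, then $x_n - x_1 = 2k+1 - 2x_1$ equals $1$ only at the endpoint $x_1 = k$. If exactly one, say $1 \in \{p,q\}$ with $x_n = w$ fixed and integral, then $x_n - x_1 = 1$ forces $x_1 = w - 1$ to be an integer, which is attainable on $E$ only at an endpoint $x_1 \in \{k, k+1\}$; the subcase $n \in \{p,q\}$ is symmetric. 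In every case the hyperplane meets $E$ only at a permutation endpoint, contradicting $x^*$ being interior to $E$. Hence $x^*$ must be a permutation.

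The main obstacle is the transversal-crossing analysis of the last paragraph: one must combine the edge description of $\cPH^n$ with the facts that swapped adjacent values differ by exactly $1$ and that the remaining coordinates are integers, which is precisely what prevents $x_n - x_1 = 1$ from slicing an edge in its interior. A secondary point requiring care is the dimension count on $L$ that reduces a hypothetical non-permutation extreme point to an interior point of a transverse edge; this step hinges on correctly identifying the tangent directions of $\cPH^n$ at $x^*$ with the direction space of its minimal face.
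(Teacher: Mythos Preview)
Your argument is correct and takes a genuinely different route from the paper. The paper works directly with the majorization description of $\cPH^n$: given a non-permutation $x$ on the facet $x_1+1=x_n$, it sorts $x$ into $z$, locates the first positive slack $s_a$ and the next zero slack $s_b$ in the inequalities $\sum_{i\le k} z_i \le \sum_{i\le k}(n+1-i)$, shows $\pi(1)\ne a$ and $\pi(n)\ne b$, and then in three cases (depending on whether $\pi(1)=b$ or $\pi(n)=a$) constructs explicit perturbations $v^{\pm}=x\pm\delta(\cdots)$ that remain in the polytope with the tiebreak satisfied. Your proof instead uses the general polyhedral fact that slicing a polytope with a halfspace can create new vertices only where the bounding hyperplane crosses an edge, and then invokes the known $1$-skeleton of $\cPH^n$ (edges are adjacent transpositions of consecutive values) to check that $x_n-x_1=1$ never hits an edge in its relative interior. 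What this buys you is a much shorter and more conceptual case analysis, at the cost of importing the edge description of the permutahedron as a black box; the paper's argument is more self-contained and, incidentally, is what underlies its closing remark that the same conclusion holds for $x_i+k\le x_j$ with arbitrary $k\in[n-1]$ (your edge argument yields this too, since the fixed coordinates along an edge are integers). One small wording issue: the sentence ``Feasibility for $Q$ in both directions additionally requires $d_n-d_1=0$'' really means ``if \emph{both} $x^*\pm\epsilon d$ are to lie in $Q$ then $d_n=d_1$''; once read that way the ensuing dimension count is fine.
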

This implies that the tiebreaking constraint is the single strongest
inequality that one can introduce without cutting off any
permutations. We can optimize linear functions efficiently over this
set \cite{Lawler1978}.

%, but also allows us to optimize linear functions
%efficiently over the set. We provide such an algorithm in Section
%\ref{sec:appendix_linear}.

To prove the theorem we only need to study the facet introduced by
adding the tiebreaking constraint $x_1 + 1 \leq x_n$. We will show
that every point on $T = \cPH^n \cap \{x\in \bR^n \:|\: x_1 + 1 = x_n
\}$ that is not a permutation can be expressed as a convex combination
of two other points in the set. In other words, all the extreme points
on the facet $T$ are permutations.

Consider $x \in T$ and let $z$ be the sorted vector such that $z_k$ is
the $k$-th largest element in $x$. Let $\pi$ be a permutation where
$z_{\pi(k)} = x_k$. Since $x_n$ is larger than $x_1$, this means that
$\pi(n) < \pi(1)$. Let $s \in \bR^n$ denote the slack in the
inequalities in equation (\ref{eqn:z_leq}), given by
\[
  s_k = \sum_{i=1}^{k}(n+1-i) -  \sum_{i=1}^{k} z_i
\]
If $x$ is not a permutation, then there must exist indices $a$, $b$
such that $s_a$ is the first non-zero value in $s$ and $s_b$ is the
first zero value after $s_a$. Note that we have
\begin{align}
\label{eqn:z_k} z_k &= n+ 1 -k \text { for } k < a, \\
\label{eqn:z_a} z_a &= n + 1 - a - s_a < n + 1 - a,  \\
\label{eqn:z_b}z_b &= n + 1 - b + s_{b-1} > n + 1 - b. 
\end{align}

We will now show that $x$ can be expressed as a convex combination of
two other points. There are three main cases to consider. The first
(and simplest) case is when $\pi(1) \neq b$ and $\pi(n) \neq a$, and
the second case is when $\pi(n) = a$, and the third case is when
$\pi(1) = b$. (The case where $\pi(1) = a$ or $\pi(n) = b$ does not
occur.)

\begin{lemma} \label{lem:neq_ab}
  $\pi(1) \neq a$ and $\pi(n) \neq b$.
\end{lemma}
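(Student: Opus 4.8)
We need to show Lemma~\ref{lem:neq_ab}: for a point $x \in T$ that is not a permutation, the sorting permutation $\pi$ satisfies $\pi(1) \neq a$ and $\pi(n) \neq b$. Recall the setup: $z$ is $x$ sorted in descending order, $\pi$ records the sorted positions via $z_{\pi(k)} = x_k$, the index $a$ is the first position where the slack $s_a$ is nonzero, and $b$ is the first zero-slack position after $a$. The key inequalities are \eqref{eqn:z_a}, $z_a < n+1-a$, and \eqref{eqn:z_b}, $z_b > n+1-b$. Also recall that since $x_n > x_1$ we have $\pi(n) < \pi(1)$.

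**The approach.** I would argue by contradiction in each case, using the tiebreaking equality $x_1 + 1 = x_n$ together with the fact that $\pi(1)$ and $\pi(n)$ are the sorted-ranks of $x_1$ and $x_n$. The crucial arithmetic observation is that consecutive entries of $z$ among the saturated ``block'' positions $k < a$ differ by exactly $1$, since $z_k = n+1-k$ there by \eqref{eqn:z_k}. So the plan is: translate each forbidden equality into a statement about $x_1$ and $x_n$ (namely $x_n = z_{\pi(n)}$ and $x_1 = z_{\pi(1)}$), and then derive a contradiction with the tiebreaking constraint $x_n - x_1 = 1$.

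**Case $\pi(1) = a$.** Suppose $\pi(1) = a$, so $x_1 = z_a$. Since $\pi(n) < \pi(1) = a$, the rank $\pi(n)$ lies in the saturated region $k < a$, so $x_n = z_{\pi(n)} = n+1-\pi(n)$ by \eqref{eqn:z_k}. The tiebreaking constraint gives $x_n = x_1 + 1 = z_a + 1$. Using \eqref{eqn:z_a}, $z_a = n+1-a-s_a$ with $s_a > 0$, so $x_n = n+2-a-s_a$. Comparing with $x_n = n+1-\pi(n)$ forces $\pi(n) = a - 1 + s_a$, and since $\pi(n) < a$ is a positive integer this is only consistent when $s_a < 1$ and $\pi(n) = a-1$ — but then $s_a$ is forced to a specific non-integer value incompatible with $z_{a-1} - z_a$ being an integer gap; I expect this to be the tight spot, and I would pin it down by comparing $z_{\pi(n)} = z_{a-1} = n+1-(a-1) = n+2-a$ directly against $z_a + 1 = n+2-a-s_a$, which yields $s_a = 0$, contradicting $s_a > 0$.

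**Case $\pi(n) = b$.** Suppose $\pi(n) = b$, so $x_n = z_b$. Since $\pi(1) > \pi(n) = b$, the rank $\pi(1)$ exceeds $b$. Using $x_1 = z_{\pi(1)}$ and the tiebreaking equality $x_1 = x_n - 1 = z_b - 1$, I would again exploit that $z$ is sorted and that $z_b > n+1-b$ by \eqref{eqn:z_b}; the descending order forces $z_{\pi(1)} \le z_{b} - (\pi(1) - b)$ in the worst case only when the intermediate gaps are exactly $1$, and I would check that $x_1 = z_b - 1$ together with $\pi(1) \ge b+1$ pushes $z_{\pi(1)}$ to equal $z_{b+1}$ with a unit gap, which then re-saturates the slack at position $b$ and contradicts $b$ being a zero-slack index following the nonzero $s_a$. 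The main obstacle in both cases is handling the possibility that $\pi(1)$ or $\pi(n)$ is not adjacent to $a$ or $b$; I would resolve this by noting that the sorted vector's gaps in the relevant ranges, combined with the slack definitions, force the difference $x_n - x_1$ to deviate from $1$ unless the forbidden equality fails, so the single-step adjacency analysis suffices by monotonicity of $z$.
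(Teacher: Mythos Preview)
Your first case ($\pi(1)=a$) has the right core idea and is actually cleaner than the paper's summation argument: since $\pi(n)<a$ lies in the saturated region, $x_n=z_{\pi(n)}=n+1-\pi(n)$, and combining with $x_n=z_a+1=n+2-a-s_a$ gives $\pi(n)=a-1+s_a$. But you overcomplicate from there. The contradiction is immediate: $\pi(n)$ is an integer with $\pi(n)\le a-1$, while $s_a>0$ forces $a-1+s_a>a-1$. There is no ``only consistent'' sub-case to chase, and the detour about ``non-integer gaps'' and pinning down $\pi(n)=a-1$ is both unjustified (you never show $\pi(n)$ must equal $a-1$) and unnecessary.

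The second case ($\pi(n)=b$) has a genuine gap. Your approach in the first case worked because the region $k<a$ has the exact formula $z_k=n+1-k$; the region $k>b$ has no analogous formula, so you cannot compute $x_1=z_{\pi(1)}$ explicitly and compare. Your claimed bound $z_{\pi(1)}\le z_b-(\pi(1)-b)$ is false in general, the reduction to $\pi(1)=b+1$ is not justified, and the conclusion that something ``re-saturates the slack at position $b$'' misidentifies the mechanism: $s_b$ is already zero by definition. The actual contradiction (and what the paper does) is that $s_{b+1}<0$, violating permutahedron membership. Concretely: with $c=\pi(1)>b$ one has $z_{b+1}\ge z_c=z_b-1=n-b+s_{b-1}$, and since $s_b=0$,
\[
s_{b+1}=s_b+(n-b)-z_{b+1}\le (n-b)-\bigl(n-b+s_{b-1}\bigr)=-s_{b-1}<0.
\]
So the target of the contradiction is feasibility of $x$ in $\mathcal{PH}^n$, not the tiebreaking equality $x_n-x_1=1$ as your last paragraph suggests.
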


\begin{proof}
  Suppose for contradiction that $\pi(1) = a$. Then, there is some
  index $c < a$ such that $\pi(n) = c$, which gives us
  \begin{align*}
    \sum_{i=1}^{a} z_i = \sum_{i=1}^{c} z_i + \sum_{i=c+1}^{a} z_i
    &\geq \left( \sum_{i=1}^{c} (n+1-i)\right) + 
      \sum_{i=c+1}^{a} z_i \\
    &\geq \left( \sum_{i=1}^{c} (n+1-i)\right) + 
      (a- c) \cdot (n + 1 - c - 1) \\
    &\geq \sum_{i=1}^{a} (n+1-i)
  \end{align*}
  where the first inequality follows from (\ref{eqn:z_k}), and the
  second inequality from $z_c \geq z_k\geq z_a = z_c - 1$ for $c < k <
  a$. This is a contradiction because it implies that either the slack
  term $s_a$ is zero or that $x$ is not in the permutahedron.
  
  The argument for $\pi(n) \neq b$ is similar. If $\pi(n) = b$, 
  there is some index $c > b$ such that $\pi(1) = c$. We have 
  \begin{align*}
    \sum_{i=1}^{b+1} z_i = \sum_{i=1}^{b} z_i + z_{b+1}
    &\geq \left( \sum_{i=1}^{b} (n+1-i)\right) + z_{b+1} \\
    &\geq \left( \sum_{i=1}^{b} (n+1-i)\right) + 
      n + 1 - b + s_{b-1} - 1  \\
    &> \sum_{i=1}^{b+1} (n+1-i)
  \end{align*}
  where the second inequality follows from $z_b \geq z_{b+1} \geq z_c
  = z_b - 1$ and (\ref{eqn:z_b}), and the third inequality from the
  fact that the slack $s_{b-1}$ is greater than zero. This is a
  contradiction since it implies $x$ is not in the permutahedron.
\end{proof}

Before we proceed with the case-by-case analysis, we will first
introduce a small $\delta$ term that will help us define two points
such that their convex combination gives us $x$. Consider the terms
$\Delta_k = z_k - z_{k+1}$ and pick $\delta >0$ such that
\[
  \delta < \min \left( \{\Delta_k \:|\: k \in [n-1], \Delta_k > 0
  \} \cup \{ s_i \:|\: a \leq i < b \} \right).
\] 
This choice of $\delta$ is small enough to allow us to take advantage
of the `wiggle-room' that the slack affords us.

We will tackle each of the two cases in separate lemmas.
\begin{lemma} \label{lem:conv_case_1}
  If $\pi(1) \neq b$ and $\pi(n) \neq a$, then the points $v^+$ and
  $v^-$ given by
  \begin{align*}
    v^+_k = 
    \begin{cases} 
      x_k + \delta &\mbox{if } \pi(k) = a \\
      x_k - \delta &\mbox{if } \pi(k) = b \\
      x_k &\mbox{otherwise  }
    \end{cases}
    \qquad\text{and}\qquad 
    v^-_k = 
    \begin{cases} 
      x_k - \delta &\mbox{if }  \pi(k) = a \\
      x_k + \delta &\mbox{if }  \pi(k) = b \\
      x_k &\mbox{otherwise  }
    \end{cases}
  \end{align*}
  satisfy $0.5 v^+ + 0.5 v^- = x$ and are in the permutahedron.
\end{lemma}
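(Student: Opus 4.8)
The plan is to establish the two asserted properties in turn: the convex-combination identity, which is immediate, and membership in $\cPH^n$, which is the substance. For the first, observe that at each coordinate $k$ the definitions set $v^+_k$ and $v^-_k$ to $x_k + \delta$ and $x_k - \delta$ (in some order) when $\pi(k) \in \{a,b\}$, and to $x_k$ otherwise; averaging gives $x_k$ in every case, so $0.5 v^+ + 0.5 v^- = x$. I would also record at this stage that, by the case hypothesis $\pi(1) \neq b$, $\pi(n) \neq a$ together with Lemma~\ref{lem:neq_ab} ($\pi(1) \neq a$, $\pi(n) \neq b$), neither coordinate $1$ nor coordinate $n$ is perturbed; hence $v^\pm_1 = x_1$ and $v^\pm_n = x_n$, so both points also satisfy $x_1 + 1 = x_n$ and stay on the facet $T$ (as the theorem ultimately needs).

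It then remains to prove $v^+, v^- \in \cPH^n$. I would use the majorization characterization proved above: a vector $w$ lies in $\cPH^n$ iff $\sum_i w_i = n(n+1)/2$ and $g_k(w) \le R_k$ for all $k$, where $g_k(w) := \max_{|S| = k} \sum_{i \in S} w_i$ is the sum of the $k$ largest entries of $w$ and $R_k := \sum_{i=1}^k (n+1-i)$. The total-sum condition passes from $x$ to $v^\pm$ because the $+\delta$ and $-\delta$ cancel, so only the inequalities $g_k(v^\pm) \le R_k$ need checking. Writing $p = \pi^{-1}(a)$ and $q = \pi^{-1}(b)$ for the two perturbed coordinates (with $x_p = z_a$, $x_q = z_b$, and $z_a \ge z_b$ since $a < b$), for any index set $S$ the sum $\sum_{i \in S} v^\pm_i$ equals $\sum_{i \in S} x_i$ adjusted by $\pm\delta$ according to which of $p, q$ lie in $S$. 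Every placement of $S$ other than the single ``dangerous'' one contributes at most $g_k(x) \le R_k$, so it suffices to bound the dangerous configuration: for $v^+$ this is $M^+_k := \max\{\sum_{i \in S} x_i : |S| = k,\ p \in S,\ q \notin S\}$, which must satisfy $M^+_k + \delta \le R_k$, and symmetrically $M^-_k$ (with $q \in S$, $p \notin S$) for $v^-$.

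For $v^+$ the perturbation spreads the $a$th and $b$th largest values apart, and the choice $\delta < \Delta_{a-1} = 1 + s_a$ and $\delta < \Delta_b$ keeps the perturbed entries at ranks $a$ and $b$; hence the sorted vector is simply $z$ with $z_a \mapsto z_a + \delta$ and $z_b \mapsto z_b - \delta$. A direct computation then gives $g_k(v^+) = \sum_{i=1}^k z_i$ except for $a \le k < b$, where it equals $R_k - s_k + \delta$, and this is $\le R_k$ exactly because $\delta < s_k$ for each such $k$ (these $s_k$ are strictly positive by the definitions of $a$ and $b$, and they lie in the set controlling $\delta$).

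The main obstacle is $v^-$: here mass is moved from the larger ($a$th) to the smaller ($b$th) coordinate, which can reorder the sorted vector whenever there are ties ($\Delta_j = 0$), so tracking the sorted order is unreliable. Instead I would compute $M^-_k$ directly. For $k \le a$ one finds $M^-_k = R_{k-1} + z_b$, giving $R_k - M^-_k = (n+1-k) - z_b \ge (a-k) + s_a \ge s_a > \delta$; for $a < k < b$ one gets $R_k - M^-_k = s_k + (z_a - z_b) \ge s_k > \delta$. The crux is the range $b \le k \le n-1$, where $M^-_k = \sum_{i=1}^{k+1} z_i - z_a$ and hence the clean identity $R_k - M^-_k = s_k + (z_a - z_{k+1})$ holds. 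Since $s_k \ge 0$ and $z_a - z_{k+1} \ge z_b - z_{b+1} = 1 + s_{b-1} + s_{b+1} > 0$ (using \eqref{eqn:z_b} and $s_b = 0$), and $\delta$ is strictly below every positive gap $\Delta_j$ and in particular below $\Delta_b = z_b - z_{b+1}$, we obtain $R_k - M^-_k > \delta$. Thus $g_k(v^-) \le R_k$ in every range and $v^- \in \cPH^n$. I expect the genuinely delicate point to be exactly this last range in the tied case: the needed slack must come from the strictly positive gap $\Delta_b$ rather than from $s_k$, which may vanish once $k \ge b$.
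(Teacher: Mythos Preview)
Your argument is correct and, for $v^+$, is essentially the paper's proof: identify the sorted vector $z^+$ of $v^+$ as $z$ with $z_a\mapsto z_a+\delta$, $z_b\mapsto z_b-\delta$, and check the partial sums using the slack bound $\delta<s_k$ for $a\le k<b$.

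Where you diverge is in the treatment of $v^-$. The paper simply writes ``the proof for $v^-$ is similar'' and leaves it at that. You correctly observe that the sorted-order argument is not literally symmetric: moving $\delta$ from $z_a$ to $z_b$ can reshuffle the sorted order whenever $\Delta_a=0$ or $\Delta_{b-1}=0$, so tracking $z^-$ explicitly is unreliable. Your alternative---bounding $g_k(v^-)$ by isolating the single ``dangerous'' subset class $q\in S,\ p\notin S$ and computing $M_k^-$ in three ranges---is cleaner and handles ties automatically. The key observation that the needed slack in the range $b\le k\le n-1$ comes from $\Delta_b=1+s_{b-1}+s_{b+1}>0$ rather than from $s_k$ (which may vanish) is exactly right, and it makes precise what the paper glosses over. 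Your approach thus buys genuine rigor at the cost of a slightly longer case analysis; the paper's approach is shorter but relies on the reader to fill in a step that is not entirely routine.
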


\begin{proof}
  Suppose $\pi(1) \neq b$ and $\pi(n) \neq a$. By Lemma
  \ref{lem:neq_ab}, we know that $\pi(1),\pi(n) \notin \{a,b \}$. The
  vectors $v^+$ and $v^-$ exploit this fact by adding or subtracting
  the $\delta$ term from these coordinates.

  To show that $v^+$ is in the polytope, consider the vector $z^+$
  which is the vector $v^+$ sorted in descending order. We will prove
  that for $z^+$ every coordinate $k \in [n]$ satisfies equation
  (\ref{eqn:z_leq}). The choice of $\delta$ ensures that the order
  between elements that are not equal in $z$ is preserved in $z^+$.
  Then, for $k$ such that $a\leq k < b$, we have
  \[ 
    \sum_{i = 1} z^+_i \leq \delta + \sum_{i = 1}^{k} z_i \leq s_k +
    \sum_{i = 1}^k z_i = \sum_{i=1}^{k} (n + 1 -i).  
  \] 
  For $k \geq b$, $\sum_{i = 1}^k z^+_i = \sum_{i = 1}^{k} z_i$, so
  all that remains is to show that equation (\ref{eqn:z_leq}) holds
  for $k < a$. We can prove this by noting that $z^+_k = z_k$ for $k
  < a$. From the slack terms, we know that $z_{a-1} = n - a + 2$ and
  $z_a < n - a + 1 < z_{a-1}$. By the choice of $\delta$, we know that
  $z_a + \delta < z_{a-1}$, hence the ordering of the first $a$ terms
  remains the same in $z^+$ and $z$ and the desired equality holds.
  The proof for $v^-$ is similar.
\end{proof}

The construction of points $v^+$ and $v^-$ for the case where $\pi(n)
= a$ will require a little more care since we can no longer directly
add $\delta$ to $z_a$ and have the point remain in the polytope. 

\begin{lemma} \label{lem:conv_case_2}
  If $\pi(n) = a$, then $x$ is not an extreme point.
\end{lemma}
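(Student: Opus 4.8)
The plan is to show that when $\pi(n)=a$, the point $x$ lies on the facet $T$ but fails to be extreme by exhibiting an explicit line segment through $x$ lying entirely inside $T$. The key difficulty compared with Lemma~\ref{lem:conv_case_1} is that the coordinate carrying the value $z_a$ now sits on the last wire (since $\pi(n)=a$ means $x_n = z_a$), so we are no longer free to increase that coordinate by $\delta$ without either violating the tiebreaking equality $x_1+1=x_n$ or pushing the sorted prefix sums past their permutahedron bounds. First I would record the consequences of $\pi(n)=a$ together with Lemma~\ref{lem:neq_ab}: we have $\pi(1)\neq a,b$ and $\pi(n)=a\neq b$, and I would locate which wire carries the value $z_b$, say wire $k$ with $\pi(k)=b$; by Lemma~\ref{lem:neq_ab} this is not wire $1$, and it is not wire $n$ either since wire $n$ carries $z_a$.

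The construction I would attempt is to perturb \emph{three} coordinates rather than two, so as to keep the tiebreaking equality intact. Because $x_n = z_a$ is the value we want to nudge but $x_n$ is pinned to $x_1+1$, the natural fix is to move $x_1$ and $x_n$ together by the same amount (preserving $x_1+1=x_n$) and to compensate on the wire carrying $z_b$ so that the total remains $n(n+1)/2$. Concretely I expect $v^{\pm}$ to add $\pm\delta$ to both $x_1$ and $x_n$ while subtracting $\mp\delta$ (or $\mp 2\delta$, to balance the sum) from the wire carrying $z_b$; the exact bookkeeping of how much to move each coordinate so that $0.5v^+ + 0.5v^- = x$, the equality \eqref{eqn:z_eq} holds, and the tiebreaking equation is preserved is the routine part I would work out but not belabor here. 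The value $z_1=\pi(1)$-slot on wire $1$ is the largest-type entry $n+1-\pi(1)$ and $z_a$ on wire $n$ is the smallest relevant entry, so there is genuine room to move both.

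The main obstacle, and where I would spend the most care, is verifying membership in $\cPH^n$ for the perturbed points $v^+$ and $v^-$ via the majorization lemma, exactly as in Lemma~\ref{lem:conv_case_1}: after sorting each perturbed vector descendingly into $z^{\pm}$, I must check the prefix-sum inequalities \eqref{eqn:z_leq} for every $k$. The choice of $\delta$ smaller than every positive gap $\Delta_k$ and every slack $s_i$ for $a\le i<b$ guarantees that sorting order is disturbed only within blocks of equal values and that the slack absorbs the perturbation on the critical prefixes $a\le k<b$. The delicate point is that moving the value on wire $n$ (which holds $z_a$) upward by $\delta$ increases the sorted prefix sum starting at position $a$, but this is precisely where the slack $s_k>0$ lives, so the inequality survives; for $k<a$ the perturbed prefix sums are unchanged up to reordering within the tight block, and for $k\ge b$ the downward compensation on the $z_b$ wire keeps the prefix sums at or below their previous (feasible) values. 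I would also confirm the reverse point $v^-$ by the symmetric argument.

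I would conclude by noting that since both $v^+$ and $v^-$ lie in $\cPH^n$, both satisfy the tiebreaking equality $x_1+1=x_n$ by construction, hence both lie in $T$, and $x=0.5v^+ + 0.5v^-$ with $v^+\neq v^-$; therefore $x$ is not an extreme point of $T$. The remaining third case $\pi(1)=b$ is then handled by an entirely analogous construction (perturbing the wire carrying $z_a$ together with wires $1$ and $n$), which I expect the paper to treat in a separate lemma.
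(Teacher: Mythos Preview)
Your overall strategy---perturb wires $1$ and $n$ together to preserve $x_1+1=x_n$, and compensate on a third wire in the slack region $[a,b]$---matches the paper's. But there is a genuine gap: you claim ``by Lemma~\ref{lem:neq_ab}'' that the wire carrying $z_b$ is not wire $1$, i.e.\ that $\pi(1)\neq b$. Lemma~\ref{lem:neq_ab} says only $\pi(1)\neq a$ and $\pi(n)\neq b$; it does \emph{not} exclude $\pi(1)=b$. In fact, setting $c:=\pi(1)$, the tiebreaking equality $x_1+1=x_n$ forces $z_c=z_a-1$, hence $c>a$, and a short argument (mirroring the $\pi(n)\neq b$ half of Lemma~\ref{lem:neq_ab}) gives $c\le b$. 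So $c=b$ is a live possibility, and in that sub-case wire $1$ \emph{is} the wire carrying $z_b$: your three-wire perturbation degenerates and no longer balances the sum.

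The paper handles this by splitting into two sub-cases. When $a<c<b$, it does exactly what you propose (with $\pm\delta/2$ on wires $1$ and $n$ and $\mp\delta$ on the $b$-wire). When $c=b$, it first checks that $a<b-1$ (since $a=b-1$ together with $z_c=z_a-1$ would force $s_a=s_{b-1}=0$, contradicting the definition of $a$), and then uses the wire with $\pi(k)=b-1$ as the compensating third wire instead of the $b$-wire. The verification of the prefix-sum inequalities then proceeds as in Lemma~\ref{lem:conv_case_1}. Your write-up needs this case distinction; without it the argument is incomplete.
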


\begin{proof}
  Let $c = \pi(1)$. An argument similar to the $\pi(n) \neq b$ part of
  Lemma \ref{lem:neq_ab} shows that the indices satisfy $c \leq b$.
  Now we divide the proof into two different cases. For the case where
  $a < c < b$, we construct the points $v^+$ and $v^-$ where
  \begin{align*}
    v^+_k = 
    \begin{cases} 
      x_k + \delta/2 &\mbox{if } k \in \{1,n\}  \\
      x_k - \delta &\mbox{if } \pi(k) = b \\
      x_k &\mbox{otherwise  }
    \end{cases}
    \qquad\text{and}\qquad 
    v^-_k = 
    \begin{cases} 
      x_k - \delta/2 &\mbox{if } k \in \{1,n\}  \\
      x_k + \delta &\mbox{if } \pi(k) = b \\
      x_k &\mbox{otherwise  }
    \end{cases}.
  \end{align*}
  For the $c = b$ case, we have $a < b - 1$ otherwise $a = b - 1$ and
  $z_b = z_a -1$, so $s_a = s_b > 0$ which is not possible due to the
  definition of $b$. Then, we can define $v^+$ and $v^-$ as follows:
  \begin{align*}
    v^+_k = 
    \begin{cases} 
      x_k + \delta/2 &\mbox{if } k \in \{1,n\}  \\
      x_k - \delta &\mbox{if } \pi(k) = b-1 \\
      x_k &\mbox{otherwise  }
    \end{cases}
    \qquad\text{and}\qquad 
    v^-_k = 
    \begin{cases} 
      x_k - \delta/2 &\mbox{if } k \in \{1,n\}  \\
      x_k + \delta &\mbox{if } \pi(k) = b-1 \\
      x_k &\mbox{otherwise  }
    \end{cases}.
  \end{align*}
  The same argument for Lemma \ref{lem:conv_case_1} applies to both of
  these cases.
\end{proof}

The proof for the final case is similar to the second case, with the
roles of $a$ and $b$ swapped. We omit it.

\begin{lemma} \label{lem:conv_case_3}
  If $\pi(1) = b$, then $x$ is not an extreme point.
\end{lemma}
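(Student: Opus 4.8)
The plan is to mirror the structure of Lemma~\ref{lem:conv_case_2}, since the final statement (Lemma~\ref{lem:conv_case_3}) is explicitly described in the text as the case ``with the roles of $a$ and $b$ swapped.'' Recall the setup: $x \in T = \cPH^n \cap \{x_1 + 1 = x_n\}$ is not a permutation, $z$ is the descending sort of $x$, and $\pi$ is the permutation with $z_{\pi(k)} = x_k$, so that $\pi(n) < \pi(1)$. The indices $a$ and $b$ mark the first nonzero slack and the first subsequent zero slack. By Lemma~\ref{lem:neq_ab} we already know $\pi(1) \neq a$, so the case $\pi(1) = b$ is genuinely distinct from the ``generic'' case handled in Lemma~\ref{lem:conv_case_1}.

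First I would set $c = \pi(n)$ and establish the analogue of the index bound from Lemma~\ref{lem:conv_case_2}: an argument symmetric to the $\pi(1) \neq a$ portion of Lemma~\ref{lem:neq_ab} should force $c \geq a$ (the wire carrying the smallest output value $z_b$ cannot sit too far to the left without violating a slack constraint). Then I would split into the subcase $a < c < b$ and the boundary subcase $c = a$. For $a < c < b$, I would define the perturbation $v^{\pm}$ by adding/subtracting $\delta/2$ to the coordinates $k \in \{1,n\}$ (positions carrying $z_b$ and $z_c$) and subtracting/adding $\delta$ to the coordinate with $\pi(k) = a$, exactly mirroring the first construction in Lemma~\ref{lem:conv_case_2} but with the $+\delta$ shift moved to the $a$-slot rather than the $b$-slot. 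For the $c = a$ boundary subcase, I would first argue $a < b - 1$ (otherwise $a = b-1$ forces $s_a = s_b > 0$, contradicting the definition of $b$), and then perturb the coordinate with $\pi(k) = a+1$ instead of $a$, again in parallel with the $c = b$ construction.

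In each subcase the verification that $v^+$ (and symmetrically $v^-$) lies in $\cPH^n$ reduces, via the majorization lemma, to checking the partial-sum inequalities~\eqref{eqn:z_leq} for the re-sorted vector $z^+$. The choice of $\delta$ (smaller than every positive gap $\Delta_k$ and every slack $s_i$ for $a \le i < b$) guarantees that sorting order is preserved where strict and that the $\delta$-sized bump is absorbed by the available slack on the relevant range; for $k \geq b$ and $k < a$ the partial sums are unchanged, and for $a \le k < b$ the extra $\delta$ is dominated by $s_k$. Since $0.5\,v^+ + 0.5\,v^- = x$ by construction and both lie in $T$ (the shifts on coordinates $1$ and $n$ are equal, preserving $x_1 + 1 = x_n$), $x$ is not extreme. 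I would simply cite ``the same argument as in Lemma~\ref{lem:conv_case_1}/\ref{lem:conv_case_2}'' rather than repeat the partial-sum bookkeeping.

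The main obstacle I anticipate is purely a matter of getting the symmetry right: it is easy to transpose an index or a sign when swapping the roles of $a$ and $b$, because here the perturbed ``minimal-slack'' coordinate is $a$ (the one with strict slack deficit $z_a = n+1-a-s_a$) rather than the ``first-saturated'' coordinate $b$, and the tiebreaking wires $1,n$ now carry the values indexed by $b$ and $c$ instead of $a$ and $c$. I would double-check that the coordinate receiving $-\delta$ indeed has enough room — i.e.\ that decreasing $z_b$ does not violate a lower constraint and that the re-sort still respects~\eqref{eqn:z_leq} — and confirm that the equality constraint~\eqref{eqn:z_eq} is maintained since the net perturbation sums to zero. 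Given that the authors themselves deem this case routine enough to omit, I expect no genuine mathematical difficulty beyond this careful mirroring.
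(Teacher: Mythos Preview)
Your proposal is correct and follows exactly the approach the paper indicates: the paper actually omits the proof of Lemma~\ref{lem:conv_case_3} entirely, stating only that it ``is similar to the second case, with the roles of $a$ and $b$ swapped,'' which is precisely the mirroring you carry out. Your identification of $c = \pi(n) \geq a$, the split into the subcases $a < c < b$ and $c = a$, and the corresponding perturbations at the $a$ (respectively $a+1$) slot are the natural duals of the constructions in Lemma~\ref{lem:conv_case_2}, and the verification via the majorization partial sums goes through for the same reasons.
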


Putting the lemmas together concludes the proof of the theorem. The
result in this section can be slightly generalized. Instead of the
tiebreaking constraint $x_1 + 1 \leq x_n$, one can replace the
constraint with $x_i + k \leq x_j$ for any $k \in [n-1]$ and $i,j \in
[n]$ and prove that every extreme point of the resulting polytope is a 
permutation.

On the other hand, this result only applies to the permutahedron, and
not the convex hull of the permutations of an arbitrary point.  This
proof relies on the fact that the difference between every two
adjacent elements on the sorted permutation vector is a fixed
constant. Given a vector $x \in \bR^n$ ($n > 3$) where this does not
hold and the polytope formed by taking the convex hull of all
permutations of $x$, every non-trivial single inequality that retains
all permutations with $x_1 < x_n$ would create an extreme point that
is not a permutation.

\section{Additional Experiment Details}
\label{sec:additional_experiments}

We include all the results for the Munsingen experiment and the linear
Markov chain experiment.

\begin{table}[htp] 
  \centering
  {
  \begin{tabular}{|c|c|cc|cc|cc|cc|}
    \hline
    Method & $p$ & Reg. Type & Level & 
2-SUM & Std Err & R-score & Std Err & Kendall's $\tau$ & Std Err \\
    \hline
    Input &&&         & 77040 & 0 & 289.0 & 0.0 & 1.000 & 0.000 \\
    Spectral &&&      & 77806 & 0 & 295.0 & 0.0 & 0.755 & 0.001 \\
    \hline
    Permut. & $-$ & none   & $-$  & 72105 & 2908 & 318.0 & 17.7 & 0.891 & 0.015 \\
    Permut. & $-$ & vector & 50\% & 70683 & 2670 & 316.6 & 12.2 & 0.892 & 0.015 \\
    Permut. & $-$ & vector & 90\% & 70075 & 2836 & 311.2 & 13.6 & 0.892 & 0.013 \\
    \hline
    Birkhoff & $n$ & none   & $-$  & 72726 & 3389 & 317.3 & 24.4 & 0.891 & 0.011 \\
    Birkhoff & $n$ & matrix & 50\% & 73198 & 3433 & 314.2 & 14.2 & 0.889 & 0.014 \\
    Birkhoff & $n$ & matrix & 90\% & 72824 & 3334 & 322.1 & 15.3 & 0.889 & 0.016 \\
    Birkhoff & $n$ & vector & 50\% & 71570 & 2878 & 309.3 & 16.5 & 0.890 & 0.015 \\
    Birkhoff & $n$ & vector & 90\% & 70999 & 2964 & 306.4 & 14.4 & 0.892 & 0.010 \\
    \hline
    Birkhoff & $4n$ & none   & $-$  & 73534 & 3556 & 317.0 & 20.0 & 0.889 & 0.011 \\
    Birkhoff & $4n$ & matrix & 50\% & 73177 & 3225 & 321.6 & 21.9 & 0.889 & 0.012 \\
    Birkhoff & $4n$ & matrix & 90\% & 74590 & 4532 & 319.2 & 14.9 & 0.885 & 0.012 \\
    Birkhoff & $4n$ & vector & 50\% & 72440 & 2917 & 317.7 & 17.9 & 0.891 & 0.010 \\
    Birkhoff & $4n$ & vector & 90\% & 71127 & 2805 & 305.8 & 16.6 & 0.891 & 0.011 \\
    \hline
  \end{tabular}
  }
  \caption{Munsingen dataset --- Performance with 38 ordering
  constraints.}
\end{table}

\begin{figure}[hbtp]
  \begin{center}
    \begin{tabular}{ccc}
      \parbox[c]{52mm}{\includegraphics[width=54mm]{./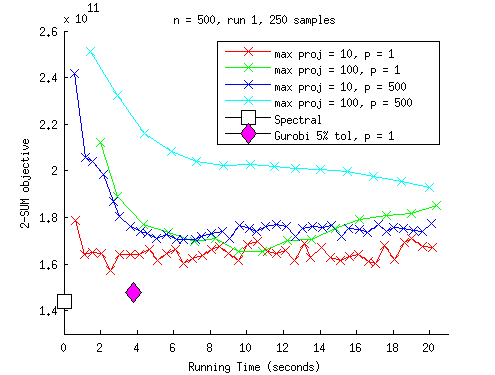}}
      \parbox[c]{52mm}{\includegraphics[width=54mm]{./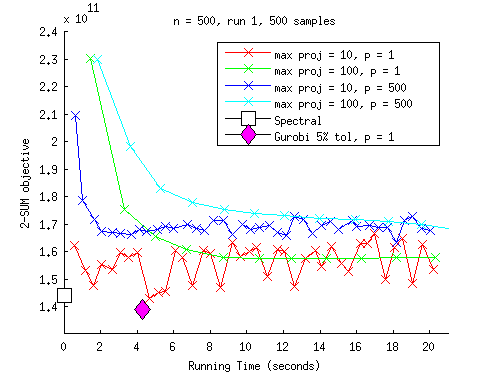}}
      \parbox[c]{55mm}{\includegraphics[width=54mm]{./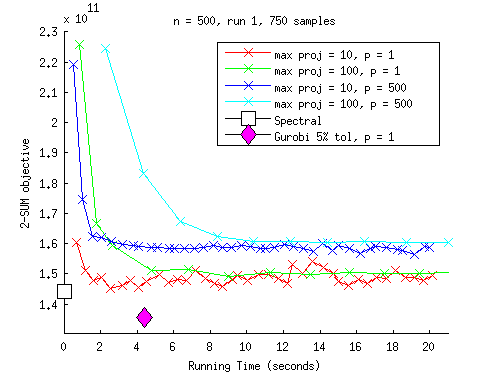}}\\
      
      \parbox[c]{52mm}{\includegraphics[width=54mm]{./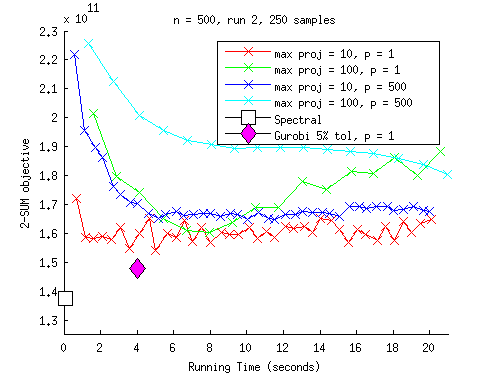}}
      \parbox[c]{52mm}{\includegraphics[width=54mm]{./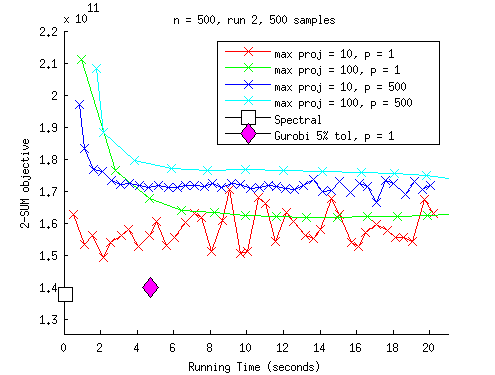}}
      \parbox[c]{55mm}{\includegraphics[width=54mm]{./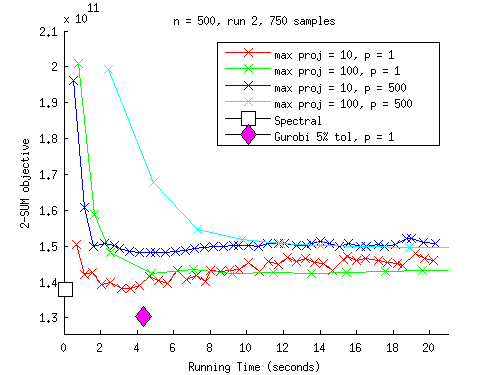}}\\
      
      \parbox[c]{52mm}{\includegraphics[width=54mm]{./fig/mc500_rqps/mc500_run_3_250_samples_.png}}
      \parbox[c]{52mm}{\includegraphics[width=54mm]{./fig/mc500_rqps/mc500_run_3_500_samples_.png}}
      \parbox[c]{55mm}{\includegraphics[width=54mm]{./fig/mc500_rqps/mc500_run_3_750_samples_.png}}\\
      
      \parbox[c]{52mm}{\includegraphics[width=54mm]{./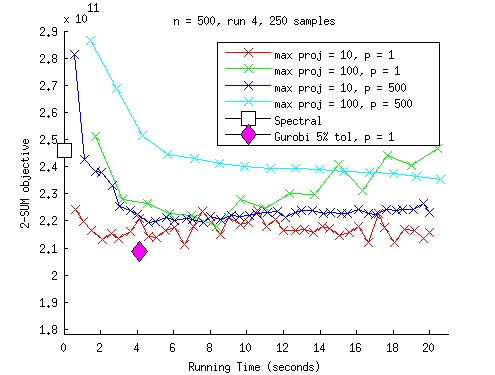}}
      \parbox[c]{52mm}{\includegraphics[width=54mm]{./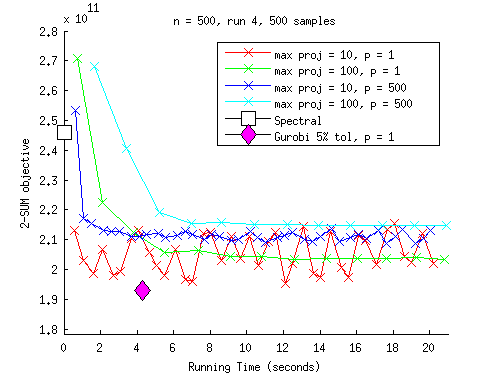}}
      \parbox[c]{55mm}{\includegraphics[width=54mm]{./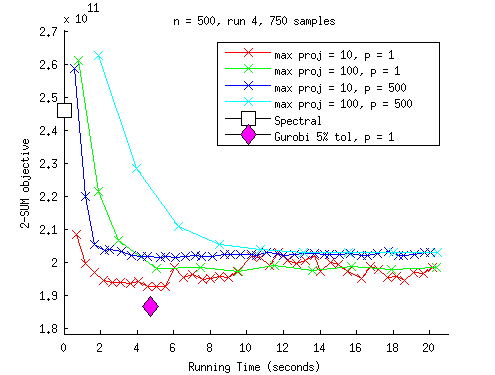}}\\
      
      \parbox[c]{52mm}{\includegraphics[width=54mm]{./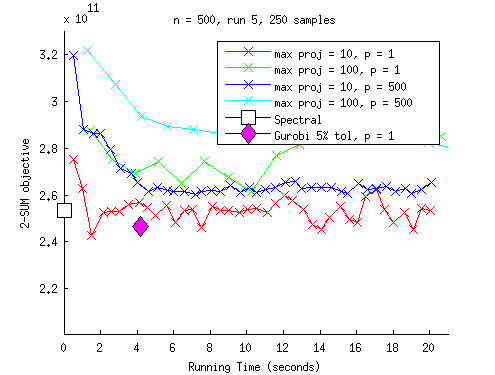}}
      \parbox[c]{52mm}{\includegraphics[width=54mm]{./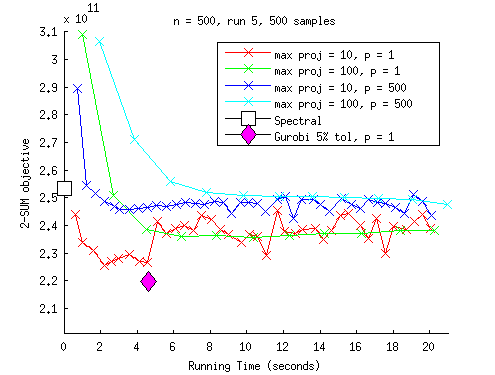}}
      \parbox[c]{55mm}{\includegraphics[width=54mm]{./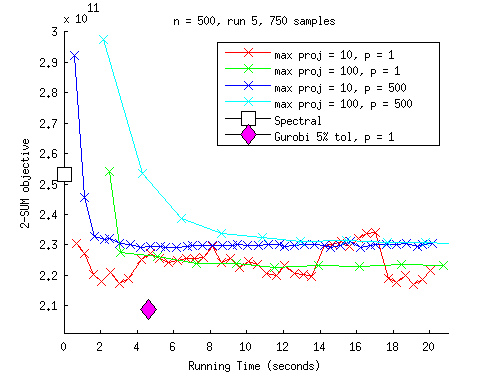}}\\
    \end{tabular}
  \end{center}
  \caption{Linear Markov Chain --- Plot of 2-SUM objective over time
  for $n = 500$ for runs 1 to 5.}
\end{figure}

\begin{figure}[hbtp]
  \begin{center}
    \begin{tabular}{ccc}
      \parbox[c]{52mm}{\includegraphics[width=54mm]{./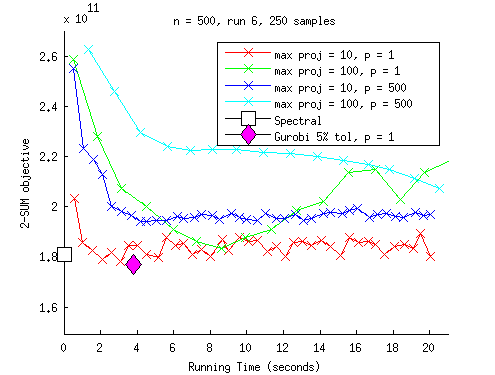}}
      \parbox[c]{52mm}{\includegraphics[width=54mm]{./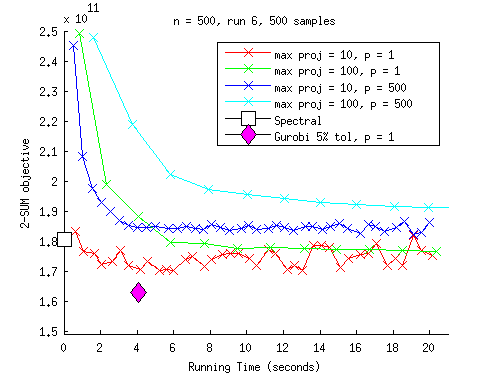}}
      \parbox[c]{55mm}{\includegraphics[width=54mm]{./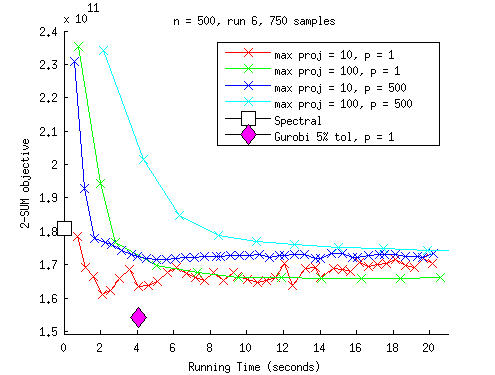}}\\
      
      \parbox[c]{52mm}{\includegraphics[width=54mm]{./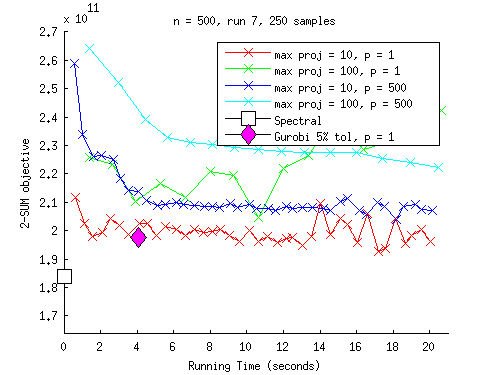}}
      \parbox[c]{52mm}{\includegraphics[width=54mm]{./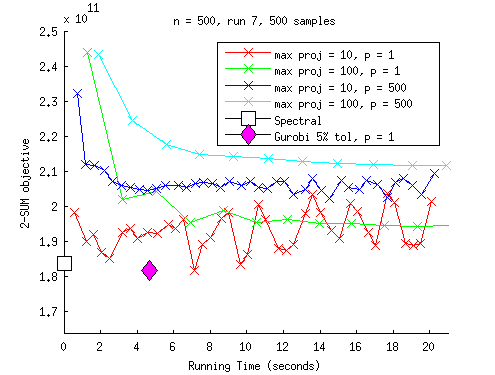}}
      \parbox[c]{55mm}{\includegraphics[width=54mm]{./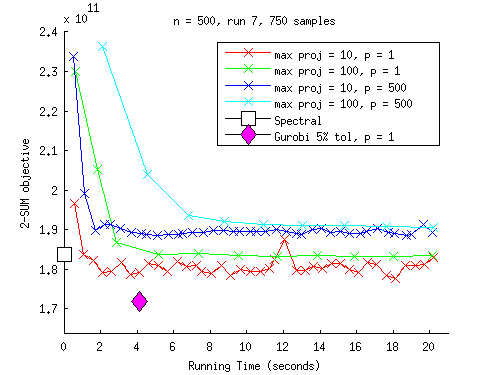}}\\
      
      \parbox[c]{52mm}{\includegraphics[width=54mm]{./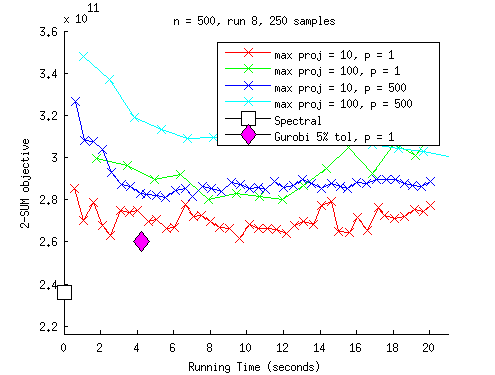}}
      \parbox[c]{52mm}{\includegraphics[width=54mm]{./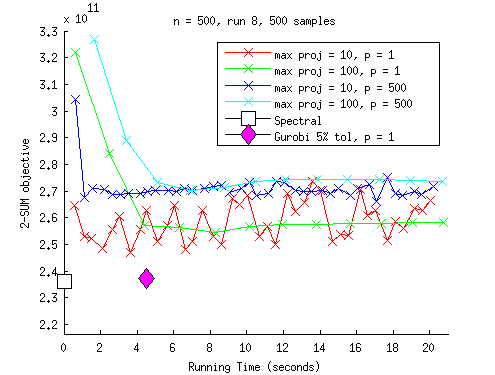}}
      \parbox[c]{55mm}{\includegraphics[width=54mm]{./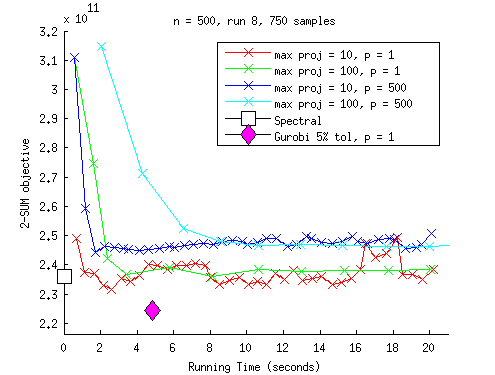}}\\

      \parbox[c]{52mm}{\includegraphics[width=54mm]{./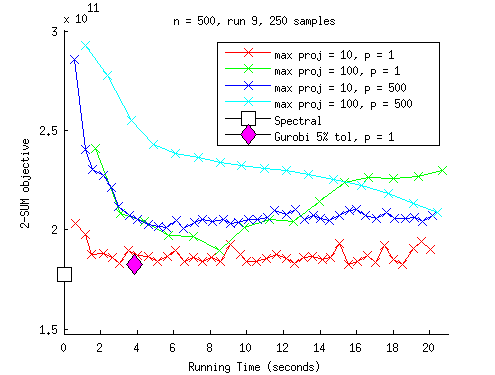}}
      \parbox[c]{52mm}{\includegraphics[width=54mm]{./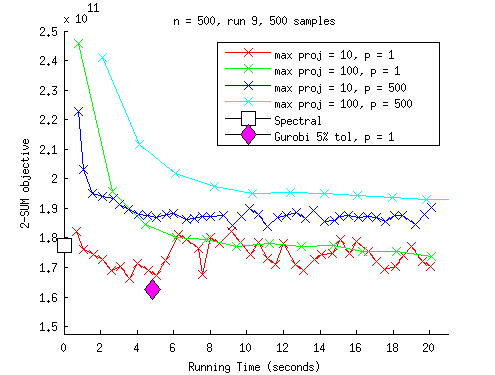}}
      \parbox[c]{55mm}{\includegraphics[width=54mm]{./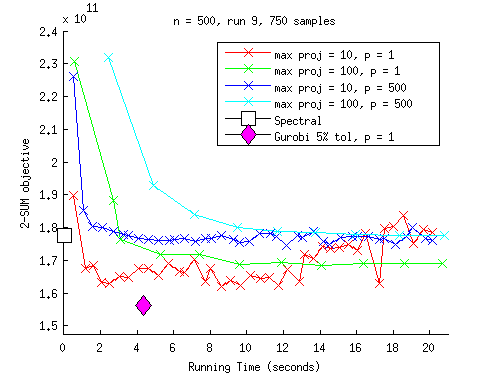}}\\
      
      \parbox[c]{52mm}{\includegraphics[width=54mm]{./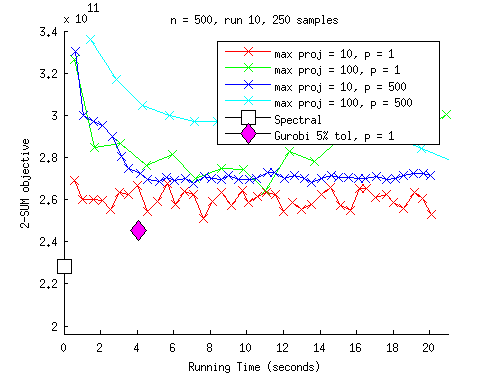}}
      \parbox[c]{52mm}{\includegraphics[width=54mm]{./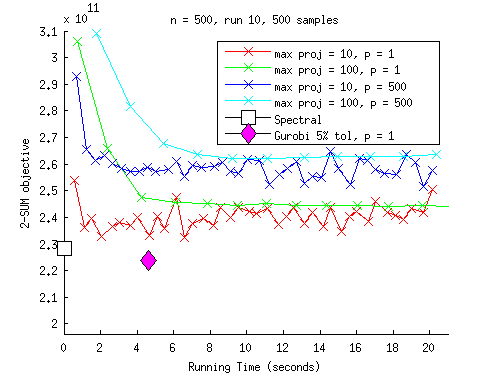}}
      \parbox[c]{55mm}{\includegraphics[width=54mm]{./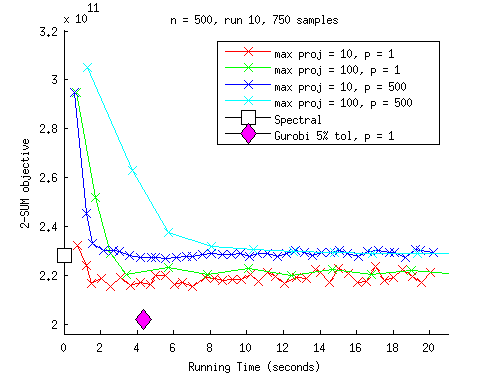}}\\
    \end{tabular}
  \end{center}
  \caption{Linear Markov Chain --- Plot of 2-SUM objective over time
  for $n = 500$ for runs 6 to 10.}
\end{figure}

\begin{figure}[hbtp]
  \begin{center}
    \begin{tabular}{ccc}
      \parbox[c]{52mm}{\includegraphics[width=54mm]{./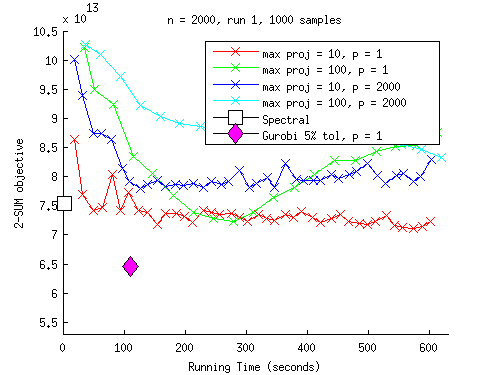}}
      \parbox[c]{52mm}{\includegraphics[width=54mm]{./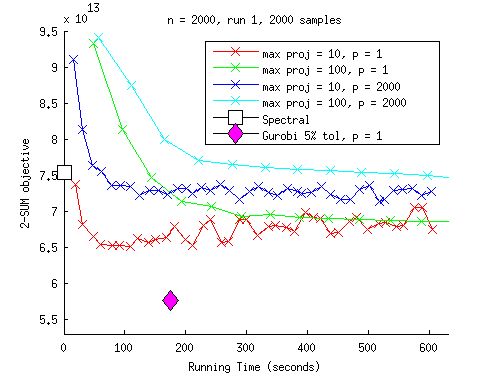}}
      \parbox[c]{55mm}{\includegraphics[width=54mm]{./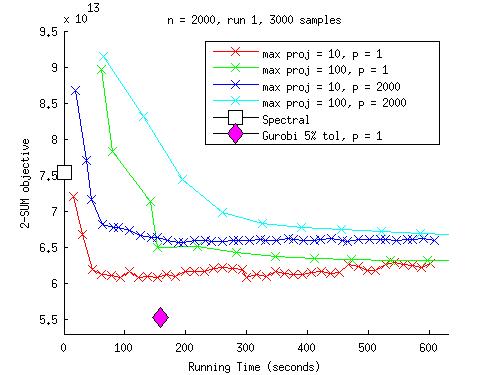}}\\
      
      \parbox[c]{52mm}{\includegraphics[width=54mm]{./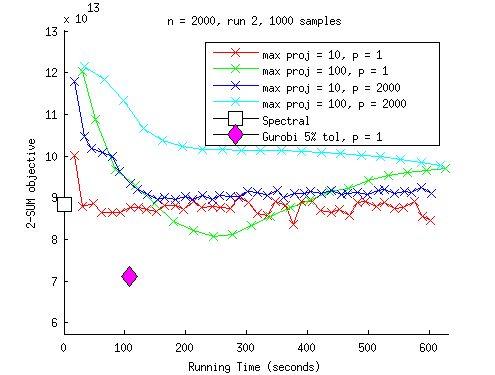}}
      \parbox[c]{52mm}{\includegraphics[width=54mm]{./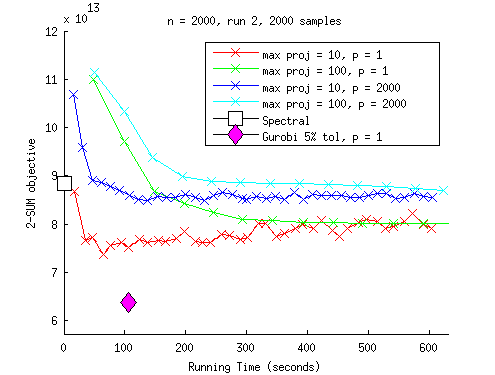}}
      \parbox[c]{55mm}{\includegraphics[width=54mm]{./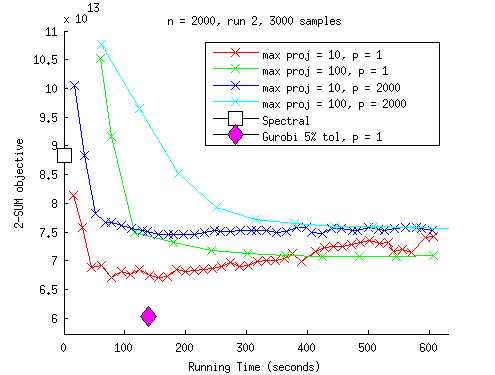}}\\
      
      \parbox[c]{52mm}{\includegraphics[width=54mm]{./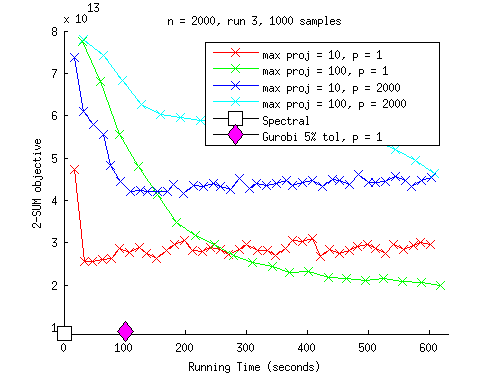}}
      \parbox[c]{52mm}{\includegraphics[width=54mm]{./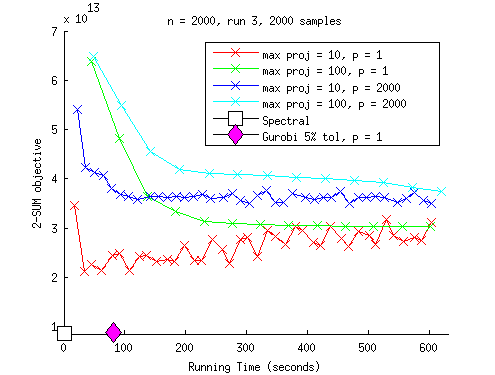}}
      \parbox[c]{55mm}{\includegraphics[width=54mm]{./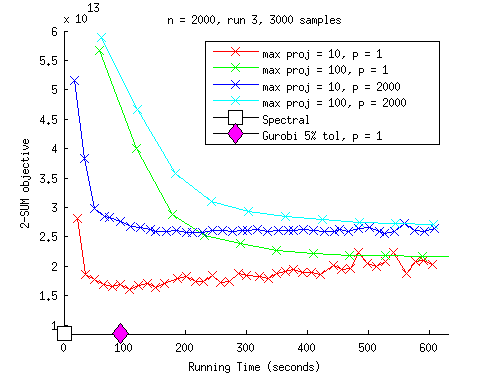}}\\
      
      \parbox[c]{52mm}{\includegraphics[width=54mm]{./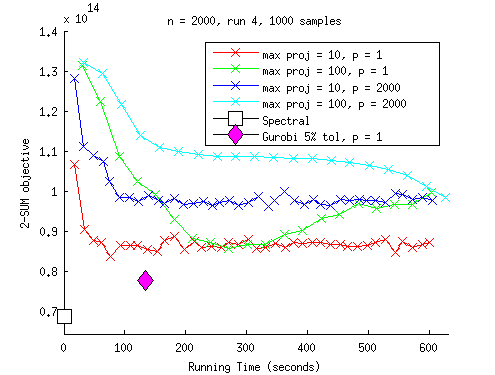}}
      \parbox[c]{52mm}{\includegraphics[width=54mm]{./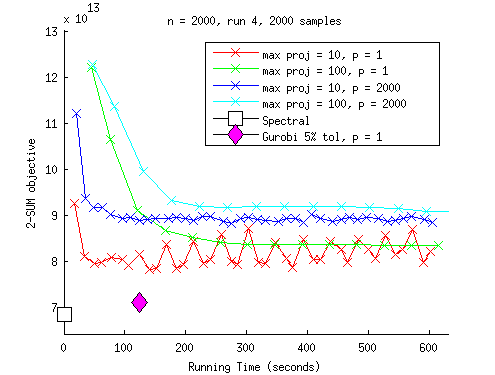}}
      \parbox[c]{55mm}{\includegraphics[width=54mm]{./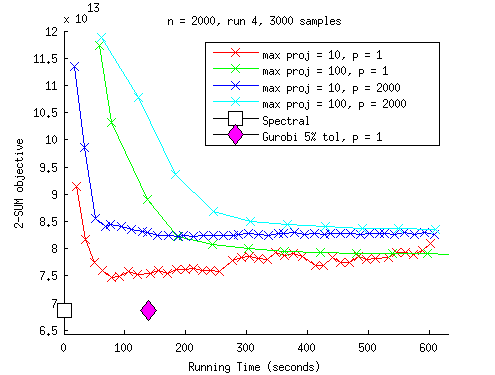}}\\
      
      \parbox[c]{52mm}{\includegraphics[width=54mm]{./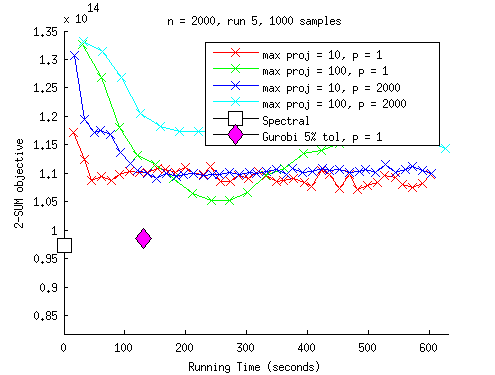}}
      \parbox[c]{52mm}{\includegraphics[width=54mm]{./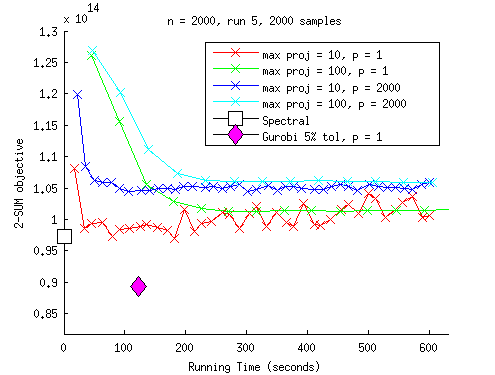}}
      \parbox[c]{55mm}{\includegraphics[width=54mm]{./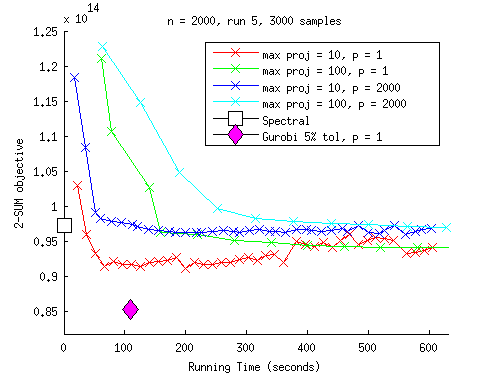}}\\
    \end{tabular}
  \end{center}
  \caption{Linear Markov Chain --- Plot of 2-SUM objective over time
  for $n = 2000$ for runs 1 to 5.}
\end{figure}

\begin{figure}[hbtp]
  \begin{center}
    \begin{tabular}{ccc}
      \parbox[c]{52mm}{\includegraphics[width=54mm]{./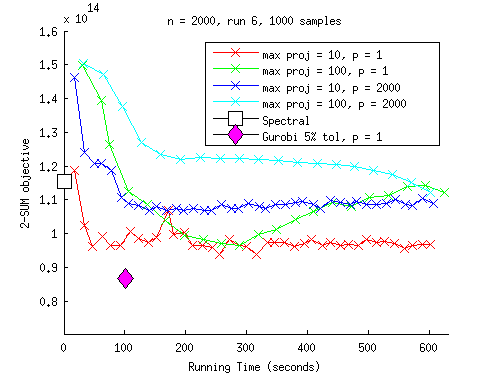}}
      \parbox[c]{52mm}{\includegraphics[width=54mm]{./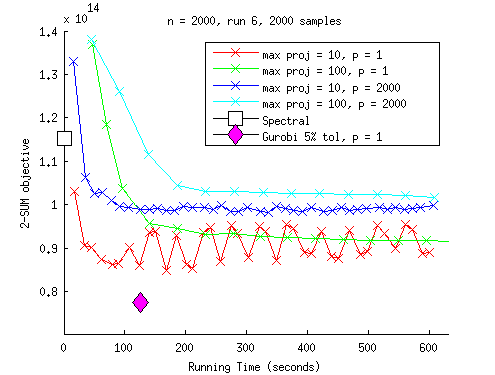}}
      \parbox[c]{55mm}{\includegraphics[width=54mm]{./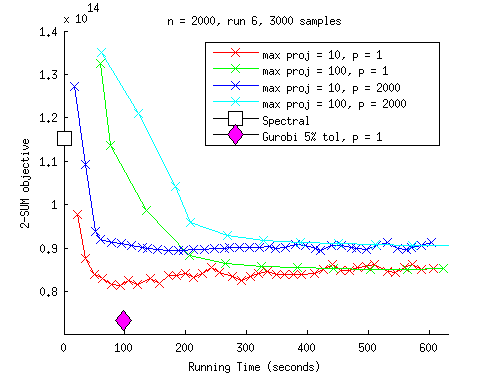}}\\
      
      \parbox[c]{52mm}{\includegraphics[width=54mm]{./fig/mc2000_rqps/mc2000_run_7_1000_samples_.png}}
      \parbox[c]{52mm}{\includegraphics[width=54mm]{./fig/mc2000_rqps/mc2000_run_7_2000_samples_.png}}
      \parbox[c]{55mm}{\includegraphics[width=54mm]{./fig/mc2000_rqps/mc2000_run_7_3000_samples_.png}}\\
      
      \parbox[c]{52mm}{\includegraphics[width=54mm]{./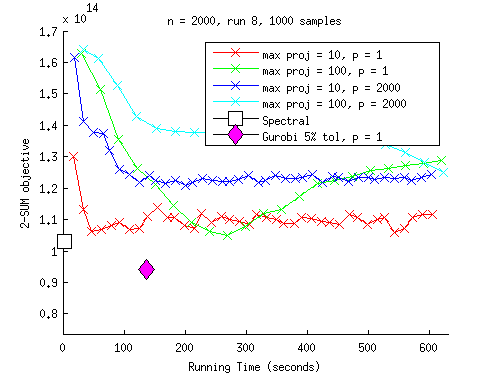}}
      \parbox[c]{52mm}{\includegraphics[width=54mm]{./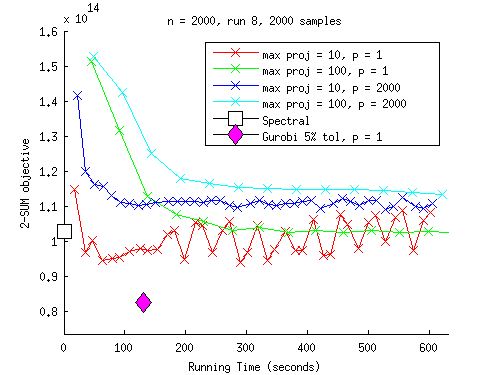}}
      \parbox[c]{55mm}{\includegraphics[width=54mm]{./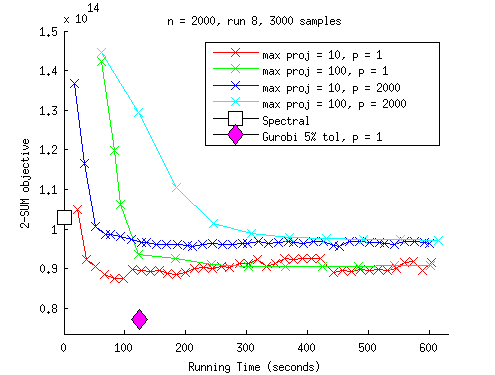}}\\

      \parbox[c]{52mm}{\includegraphics[width=54mm]{./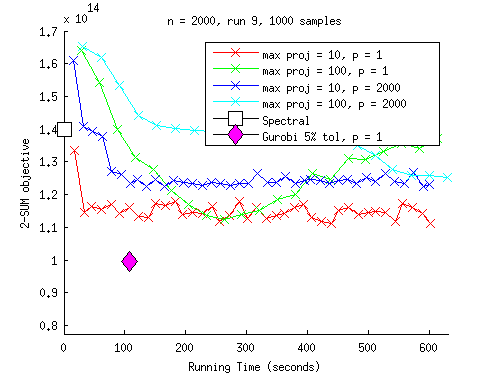}}
      \parbox[c]{52mm}{\includegraphics[width=54mm]{./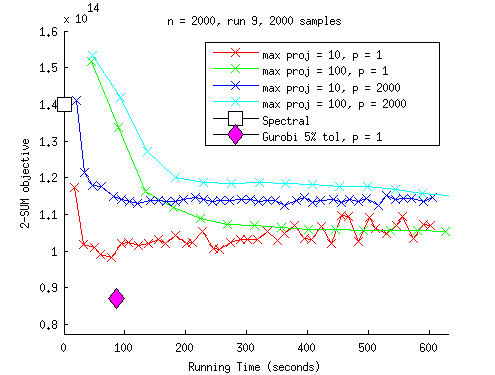}}
      \parbox[c]{55mm}{\includegraphics[width=54mm]{./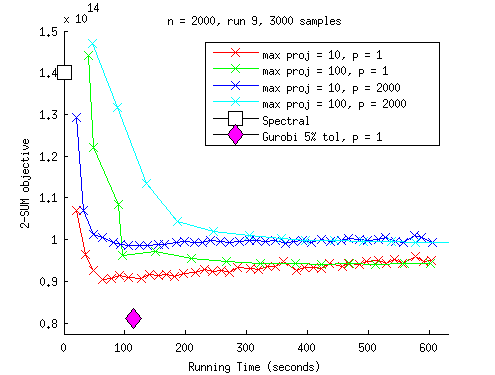}}\\
      
      \parbox[c]{52mm}{\includegraphics[width=54mm]{./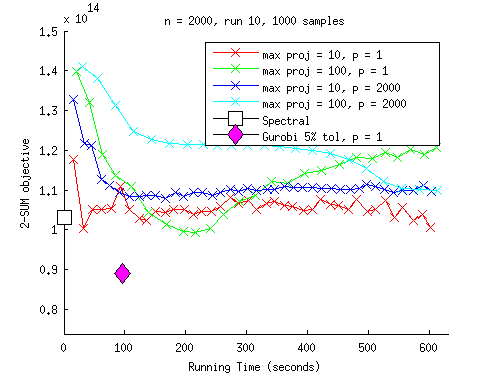}}
      \parbox[c]{52mm}{\includegraphics[width=54mm]{./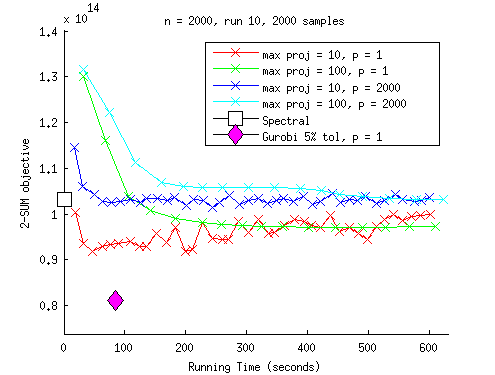}}
      \parbox[c]{55mm}{\includegraphics[width=54mm]{./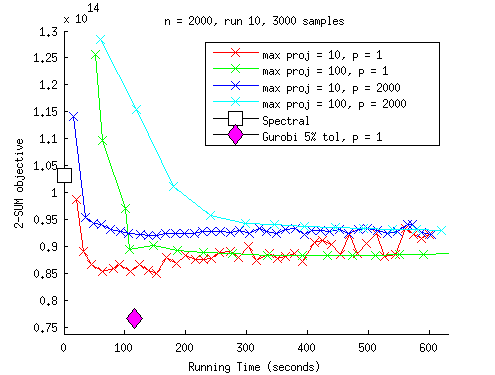}}\\
    \end{tabular}
  \end{center}
  \caption{Linear Markov Chain --- Plot of 2-SUM objective over time
  for $n = 2000$ for runs 6 to 10.}
\end{figure}

\begin{figure}[hbtp]
  \begin{center}
    \begin{tabular}{ccc}
      \parbox[c]{52mm}{\includegraphics[width=54mm]{./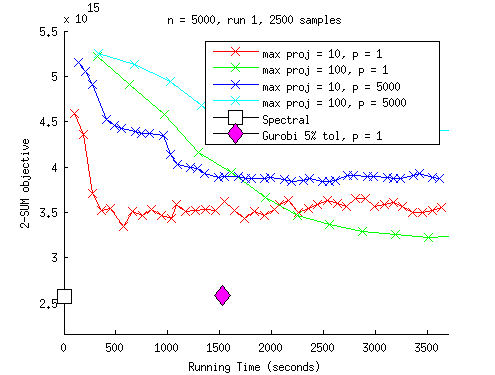}}
      \parbox[c]{52mm}{\includegraphics[width=54mm]{./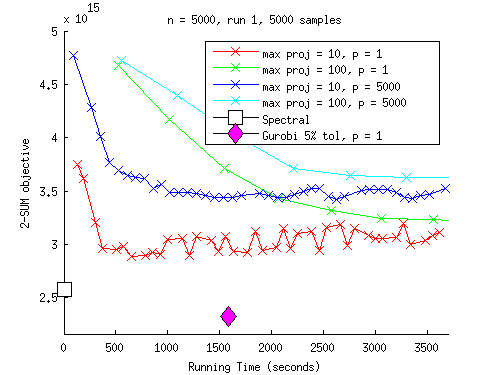}}
      \parbox[c]{55mm}{\includegraphics[width=54mm]{./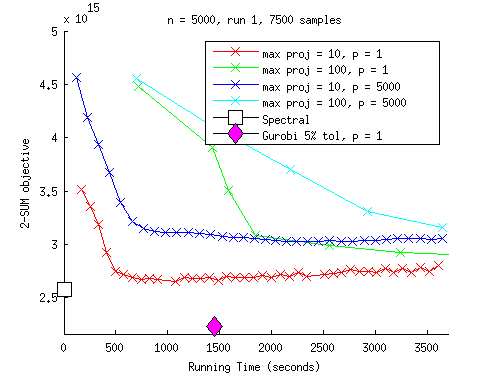}}\\
      
      \parbox[c]{52mm}{\includegraphics[width=54mm]{./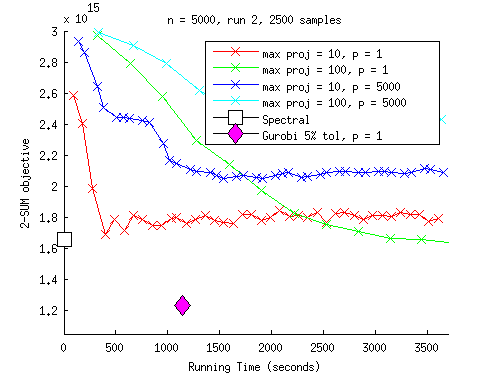}}
      \parbox[c]{52mm}{\includegraphics[width=54mm]{./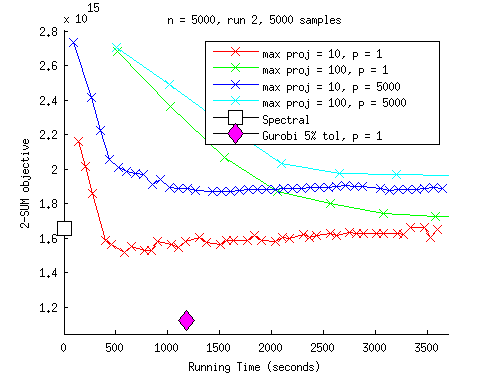}}
      \parbox[c]{55mm}{\includegraphics[width=54mm]{./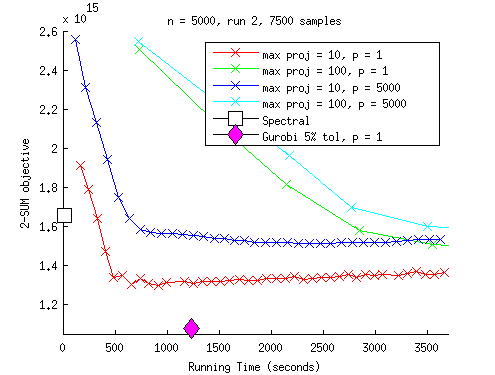}}\\
      
      \parbox[c]{52mm}{\includegraphics[width=54mm]{./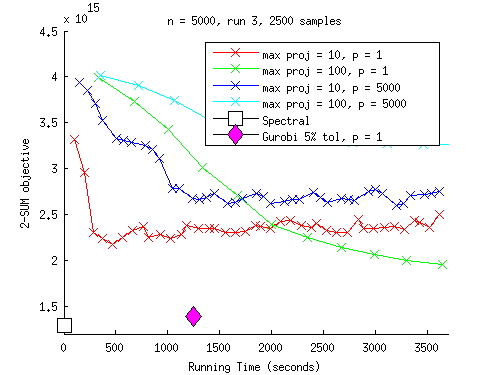}}
      \parbox[c]{52mm}{\includegraphics[width=54mm]{./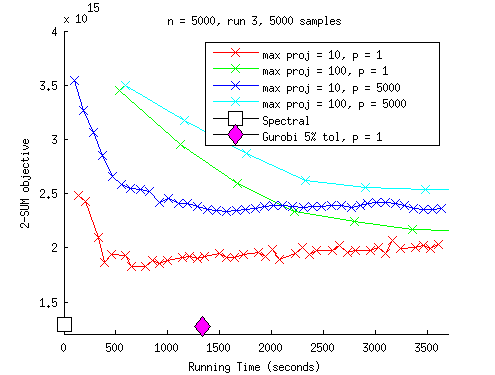}}
      \parbox[c]{55mm}{\includegraphics[width=54mm]{./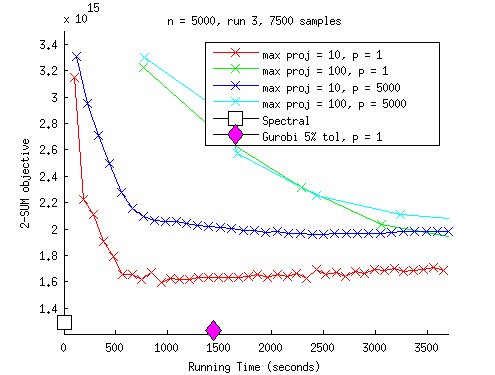}}\\
      
      \parbox[c]{52mm}{\includegraphics[width=54mm]{./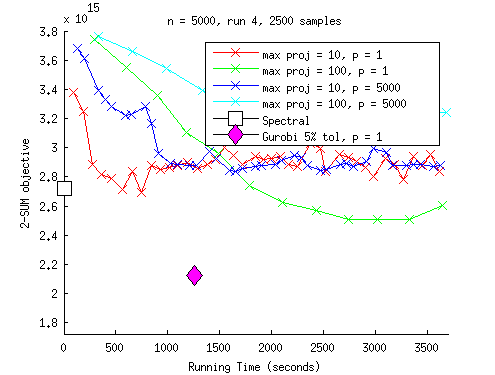}}
      \parbox[c]{52mm}{\includegraphics[width=54mm]{./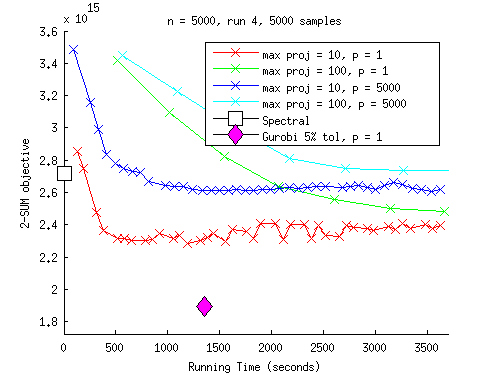}}
      \parbox[c]{55mm}{\includegraphics[width=54mm]{./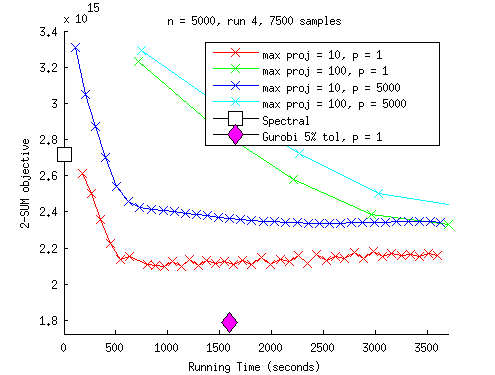}}\\
      
      \parbox[c]{52mm}{\includegraphics[width=54mm]{./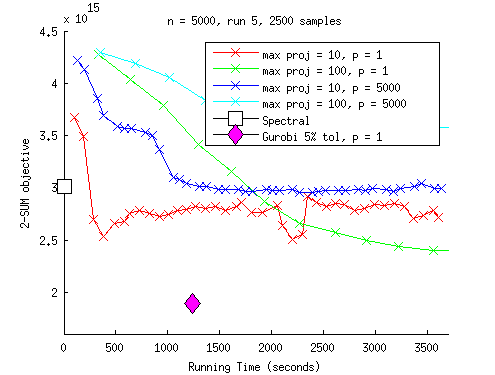}}
      \parbox[c]{52mm}{\includegraphics[width=54mm]{./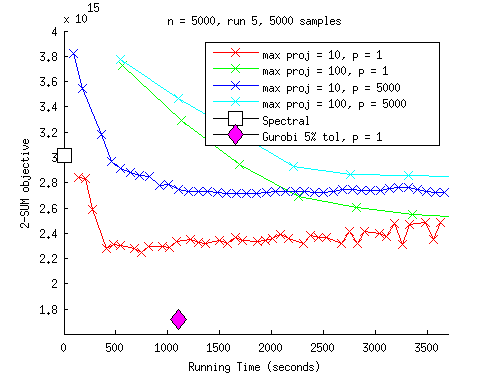}}
      \parbox[c]{55mm}{\includegraphics[width=54mm]{./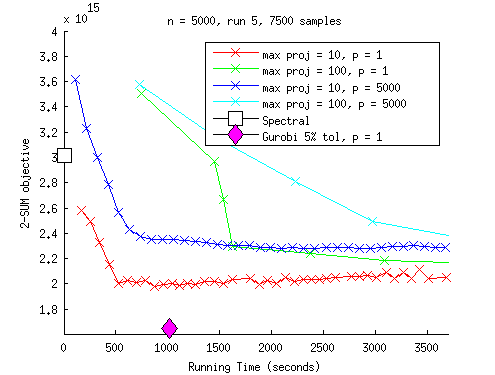}}\\
    \end{tabular}
  \end{center}
  \caption{Linear Markov Chain --- Plot of 2-SUM objective over time
  for $n = 5000$ for runs 1 to 5.}
\end{figure}

\begin{figure}[hbtp]
  \begin{center}
    \begin{tabular}{ccc}
      \parbox[c]{52mm}{\includegraphics[width=54mm]{./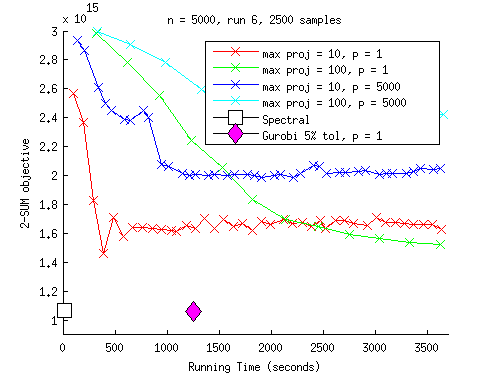}}
      \parbox[c]{52mm}{\includegraphics[width=54mm]{./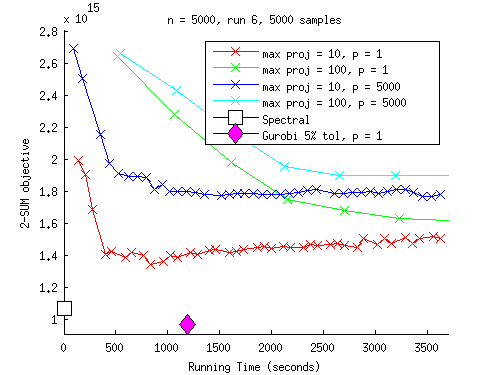}}
      \parbox[c]{55mm}{\includegraphics[width=54mm]{./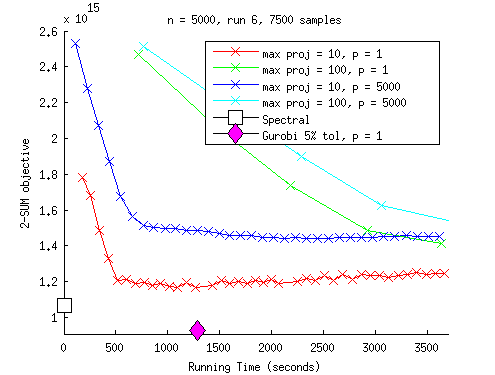}}\\
      
      \parbox[c]{52mm}{\includegraphics[width=54mm]{./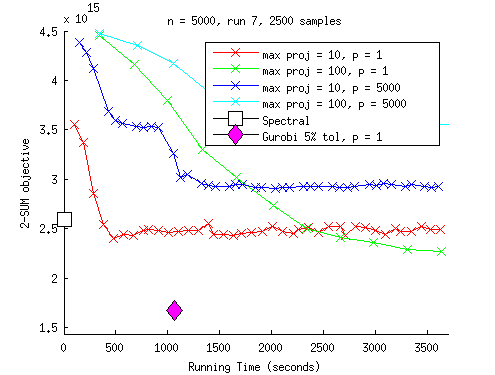}}
      \parbox[c]{52mm}{\includegraphics[width=54mm]{./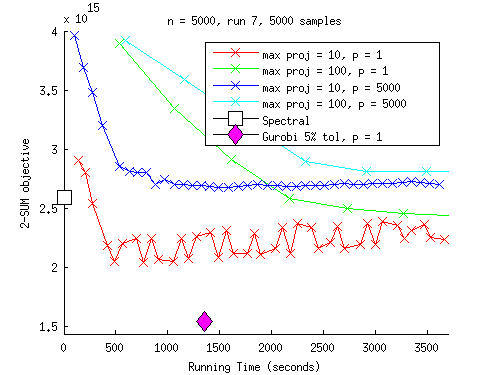}}
      \parbox[c]{55mm}{\includegraphics[width=54mm]{./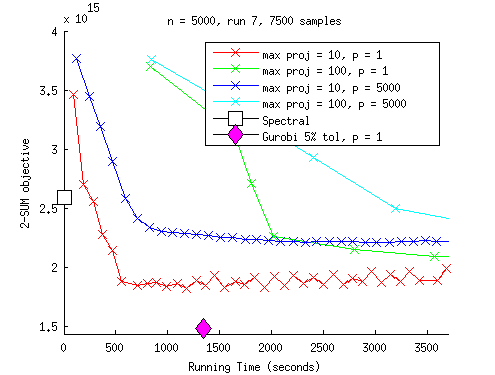}}\\
      
      \parbox[c]{52mm}{\includegraphics[width=54mm]{./fig/mc5000_rqps/mc5000_run_8_2500_samples_.png}}
      \parbox[c]{52mm}{\includegraphics[width=54mm]{./fig/mc5000_rqps/mc5000_run_8_5000_samples_.png}}
      \parbox[c]{55mm}{\includegraphics[width=54mm]{./fig/mc5000_rqps/mc5000_run_8_7500_samples_.png}}\\

      \parbox[c]{52mm}{\includegraphics[width=54mm]{./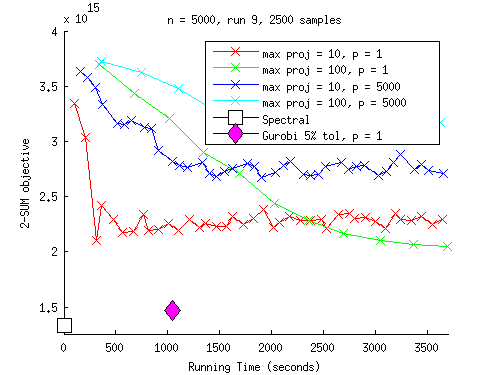}}
      \parbox[c]{52mm}{\includegraphics[width=54mm]{./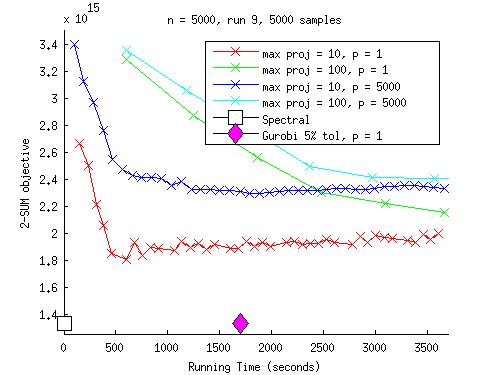}}
      \parbox[c]{55mm}{\includegraphics[width=54mm]{./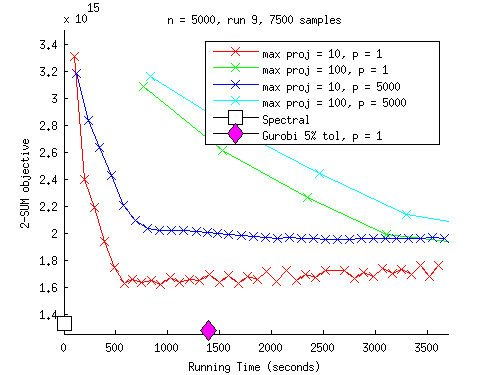}}\\
      
      \parbox[c]{52mm}{\includegraphics[width=54mm]{./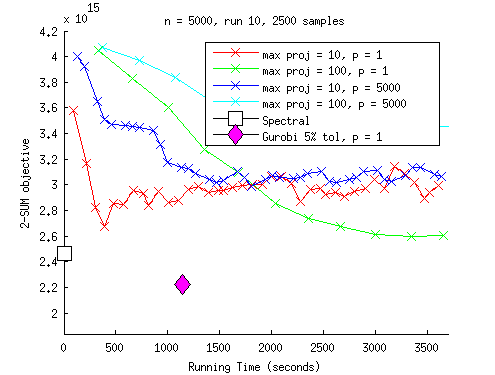}}
      \parbox[c]{52mm}{\includegraphics[width=54mm]{./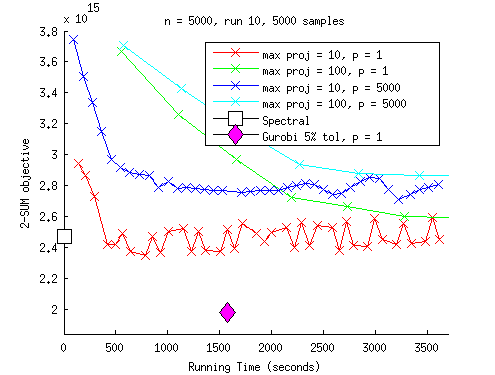}}
      \parbox[c]{55mm}{\includegraphics[width=54mm]{./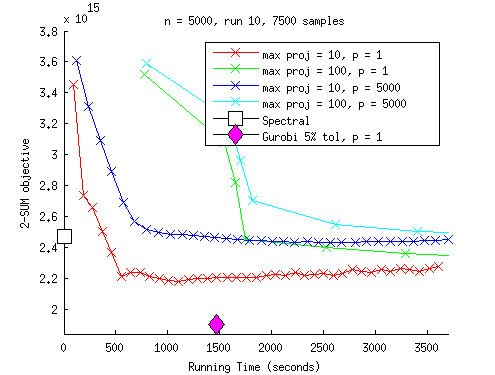}}\\
    \end{tabular}
  \end{center}
  \caption{Linear Markov Chain --- Plot of 2-SUM objective over time
  for $n = 5000$ for runs 6 to 10.}
\end{figure}

\begin{figure}[hbtp]
  \begin{center}
    \begin{tabular}{ccc}
      \parbox[c]{52mm}{\includegraphics[width=54mm]{./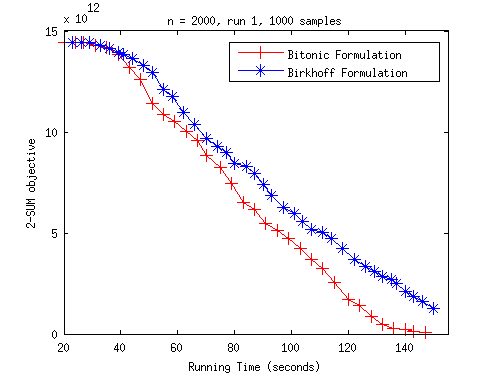}}
      \parbox[c]{52mm}{\includegraphics[width=54mm]{./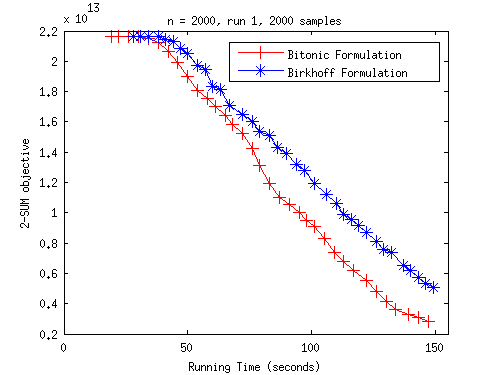}}
      \parbox[c]{55mm}{\includegraphics[width=54mm]{./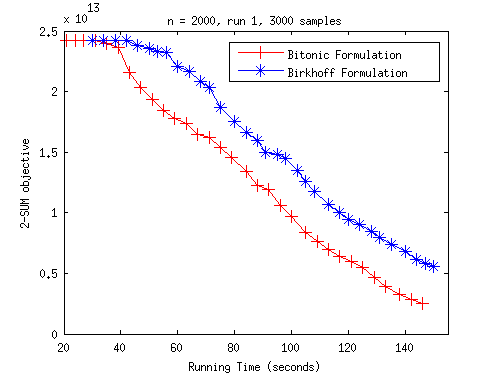}}\\

      \parbox[c]{52mm}{\includegraphics[width=54mm]{./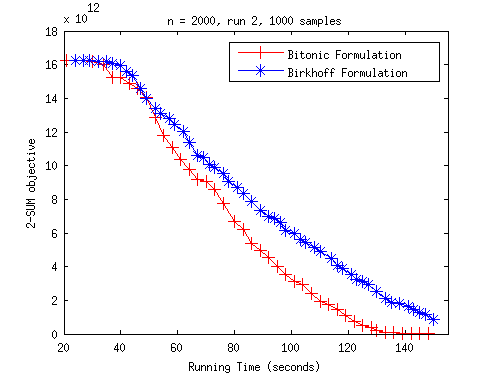}}
      \parbox[c]{52mm}{\includegraphics[width=54mm]{./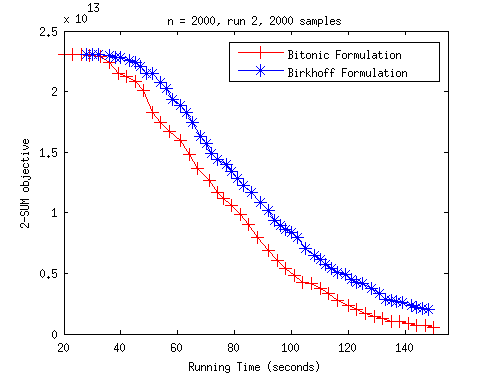}}
      \parbox[c]{55mm}{\includegraphics[width=54mm]{./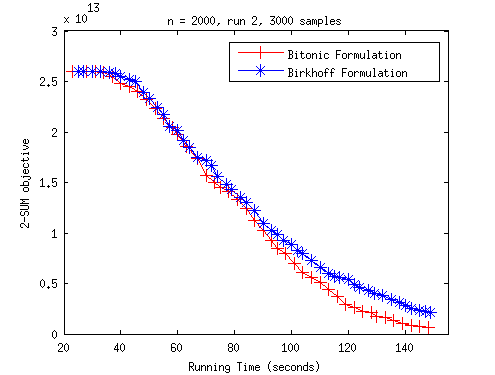}}\\

      \parbox[c]{52mm}{\includegraphics[width=54mm]{./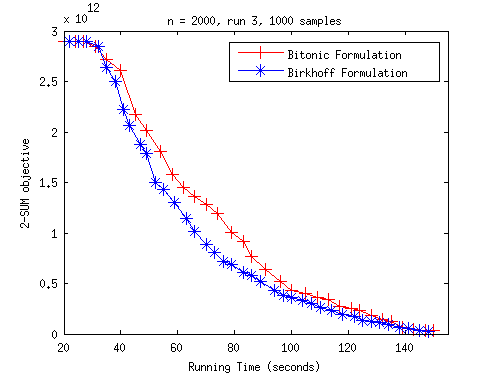}}
      \parbox[c]{52mm}{\includegraphics[width=54mm]{./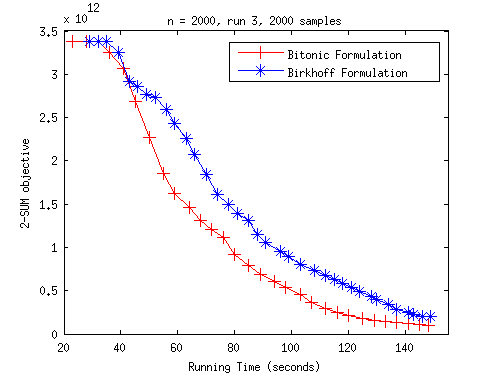}}
      \parbox[c]{55mm}{\includegraphics[width=54mm]{./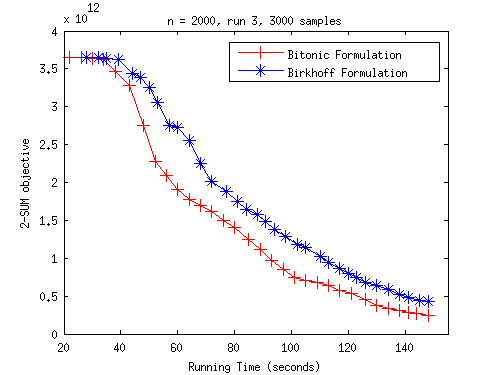}}\\

      \parbox[c]{52mm}{\includegraphics[width=54mm]{./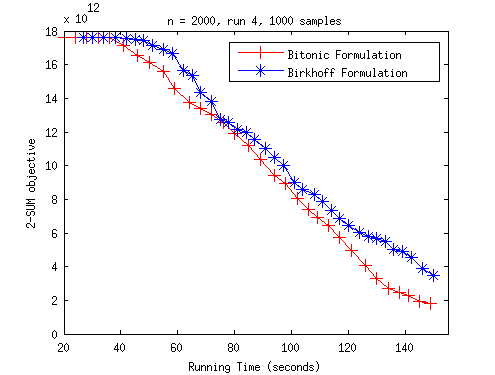}}
      \parbox[c]{52mm}{\includegraphics[width=54mm]{./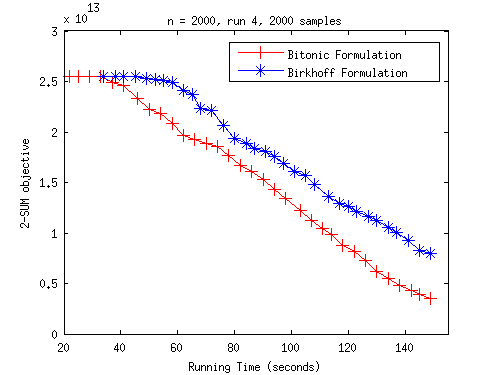}}
      \parbox[c]{55mm}{\includegraphics[width=54mm]{./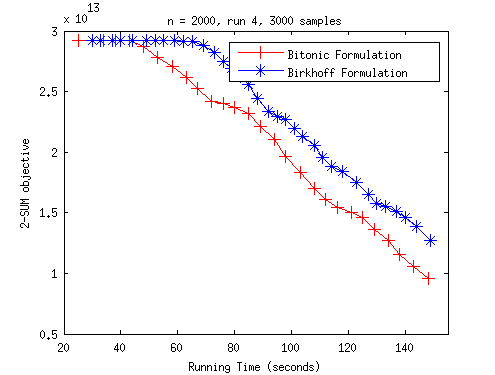}}\\ 
      
      \parbox[c]{52mm}{\includegraphics[width=54mm]{./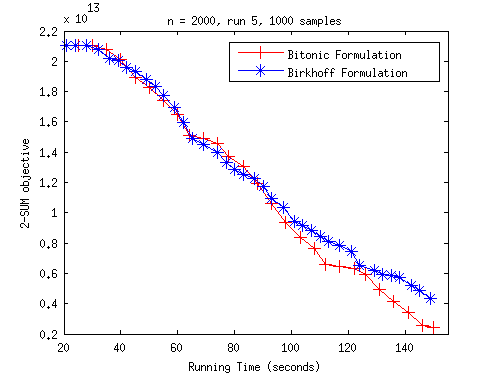}}
      \parbox[c]{52mm}{\includegraphics[width=54mm]{./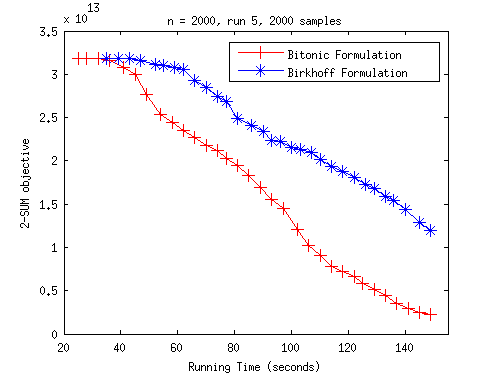}}
      \parbox[c]{55mm}{\includegraphics[width=54mm]{./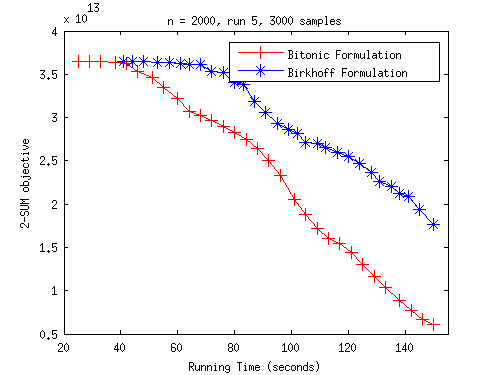}}\\ 
    \end{tabular}
  \end{center}
  \caption{Linear Markov Chain --- Plot of the difference of the 2-SUM
    objective from the baseline objective over time for $n = 2000$ for
    runs 1 to 5.}
\end{figure}

\begin{figure}[hbtp]
  \begin{center}
    \begin{tabular}{ccc}
      \parbox[c]{52mm}{\includegraphics[width=54mm]{./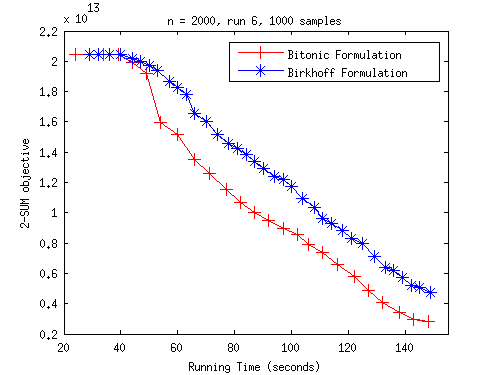}}
      \parbox[c]{52mm}{\includegraphics[width=54mm]{./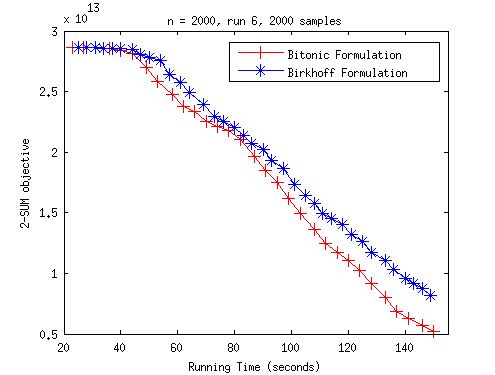}}
      \parbox[c]{55mm}{\includegraphics[width=54mm]{./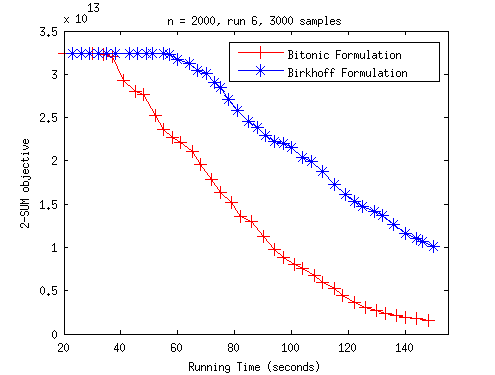}}\\ 
      
      \parbox[c]{52mm}{\includegraphics[width=54mm]{./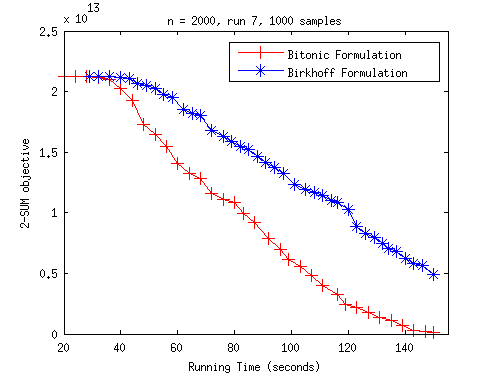}}
      \parbox[c]{52mm}{\includegraphics[width=54mm]{./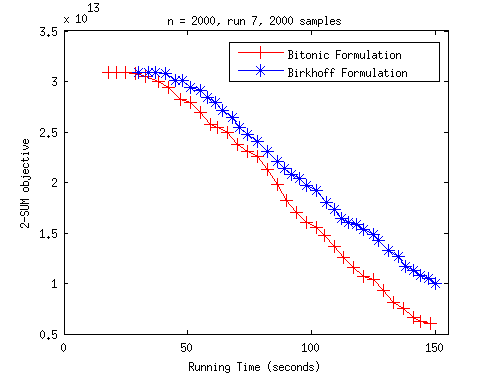}}
      \parbox[c]{55mm}{\includegraphics[width=54mm]{./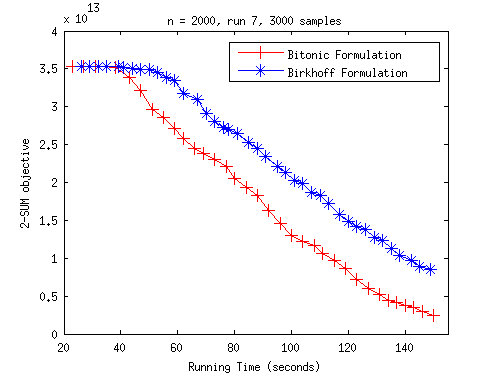}}\\ 
      
      \parbox[c]{52mm}{\includegraphics[width=54mm]{./fig/mc2000_grb/mc2000_grb_run_8_1000_samples_.png}}
      \parbox[c]{52mm}{\includegraphics[width=54mm]{./fig/mc2000_grb/mc2000_grb_run_8_2000_samples_.png}}
      \parbox[c]{55mm}{\includegraphics[width=54mm]{./fig/mc2000_grb/mc2000_grb_run_8_3000_samples_.png}}\\ 
      
      \parbox[c]{52mm}{\includegraphics[width=54mm]{./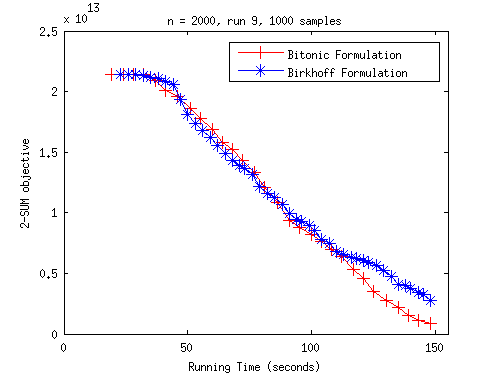}}
      \parbox[c]{52mm}{\includegraphics[width=54mm]{./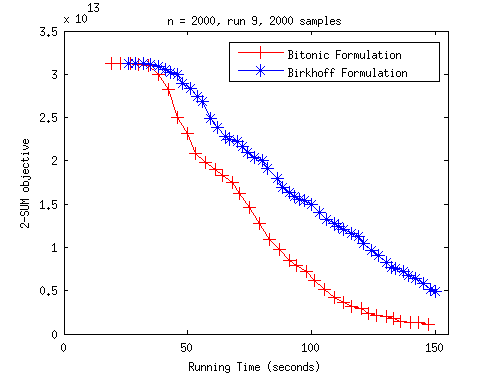}}
      \parbox[c]{55mm}{\includegraphics[width=54mm]{./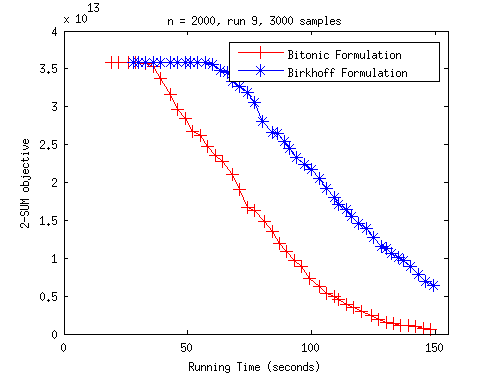}}\\ 
      
      \parbox[c]{52mm}{\includegraphics[width=54mm]{./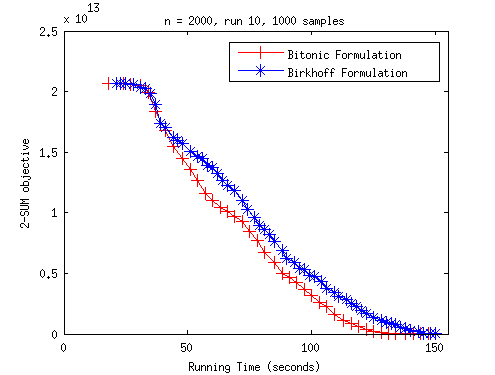}}
      \parbox[c]{52mm}{\includegraphics[width=54mm]{./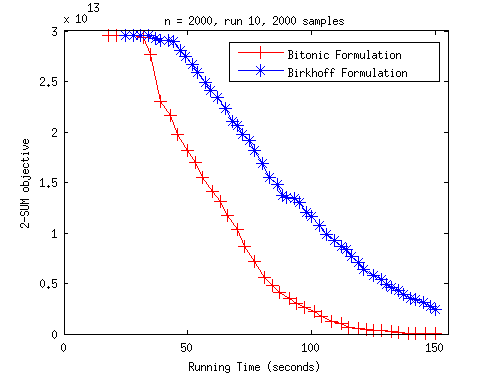}}
      \parbox[c]{55mm}{\includegraphics[width=54mm]{./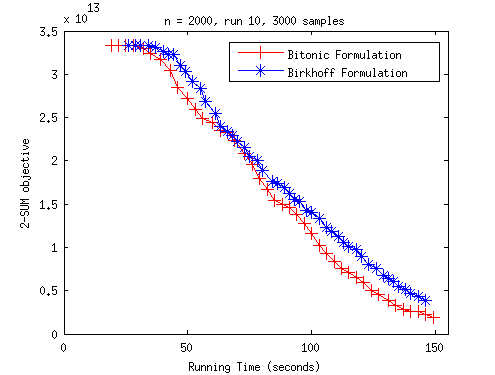}}\\   
    \end{tabular}
  \end{center}
  \caption{Linear Markov Chain --- Plot of the difference of the 2-SUM
    objective from the baseline objective over time for $n = 2000$ for
    runs 6 to 10.}
\end{figure}

\begin{figure}[hbtp]
  \begin{center}
    \begin{tabular}{ccc}
      \parbox[c]{52mm}{\includegraphics[width=54mm]{./fig/mc5000_grb/mc5000_grb_run_1_2500_samples_.png}}
      \parbox[c]{52mm}{\includegraphics[width=54mm]{./fig/mc5000_grb/mc5000_grb_run_1_5000_samples_.png}}
      \parbox[c]{55mm}{\includegraphics[width=54mm]{./fig/mc5000_grb/mc5000_grb_run_1_7500_samples_.png}}\\

      \parbox[c]{52mm}{\includegraphics[width=54mm]{./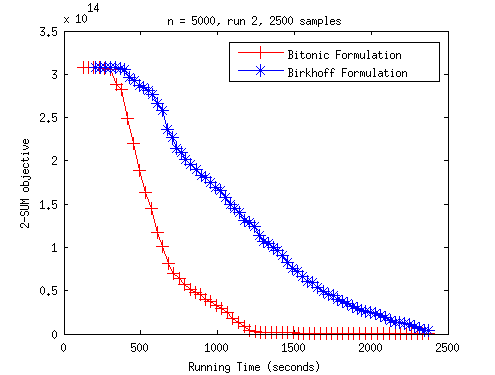}}
      \parbox[c]{52mm}{\includegraphics[width=54mm]{./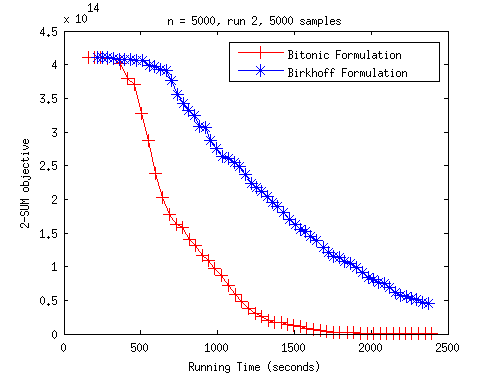}}
      \parbox[c]{55mm}{\includegraphics[width=54mm]{./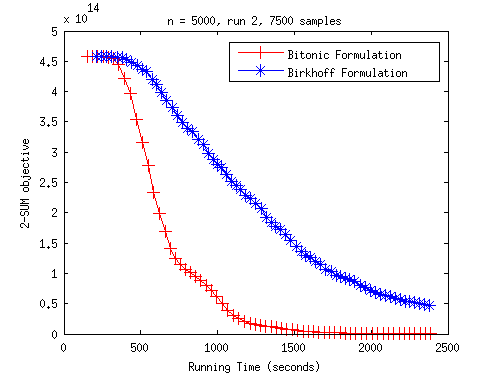}}\\

      \parbox[c]{52mm}{\includegraphics[width=54mm]{./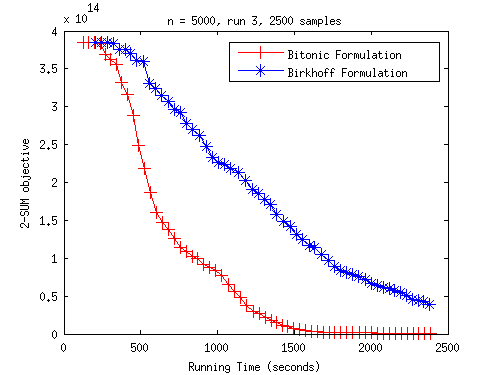}}
      \parbox[c]{52mm}{\includegraphics[width=54mm]{./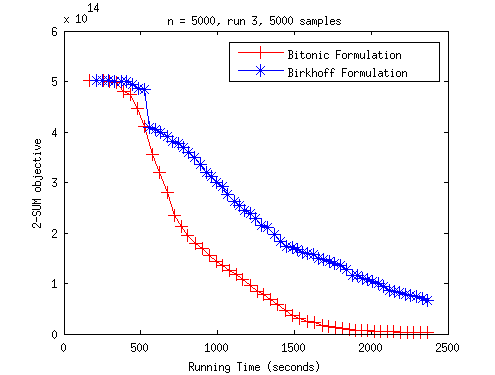}}
      \parbox[c]{55mm}{\includegraphics[width=54mm]{./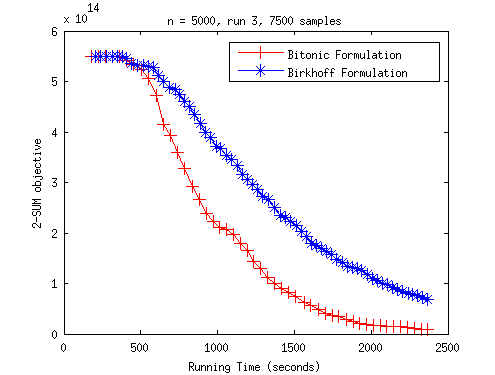}}\\

      \parbox[c]{52mm}{\includegraphics[width=54mm]{./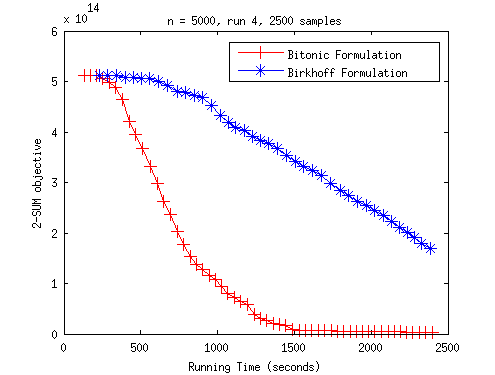}}
      \parbox[c]{52mm}{\includegraphics[width=54mm]{./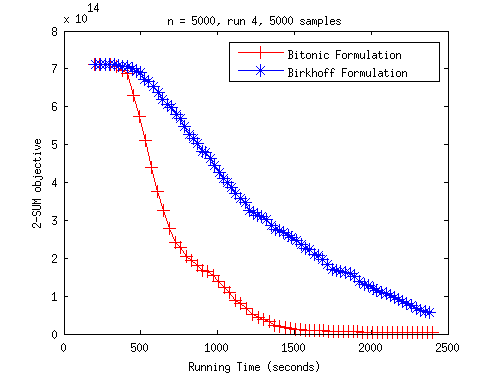}}
      \parbox[c]{55mm}{\includegraphics[width=54mm]{./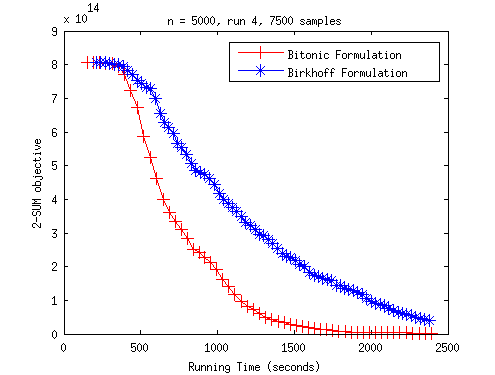}}\\ 
      
      \parbox[c]{52mm}{\includegraphics[width=54mm]{./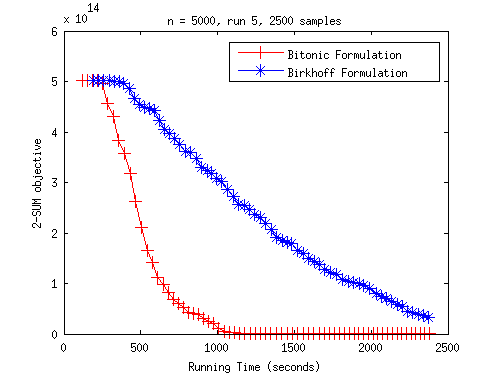}}
      \parbox[c]{52mm}{\includegraphics[width=54mm]{./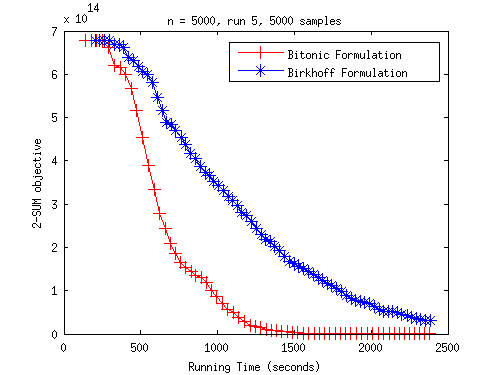}}
      \parbox[c]{55mm}{\includegraphics[width=54mm]{./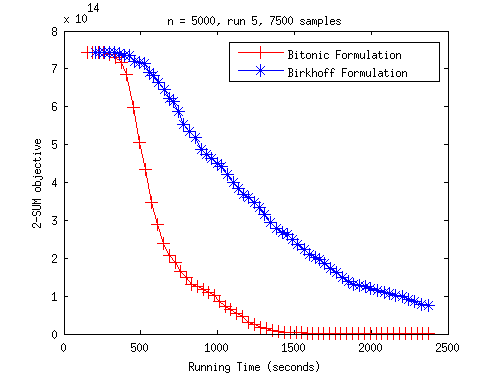}}
    \end{tabular}
  \end{center}
  \caption{Linear Markov Chain --- Plot of the difference of the 2-SUM
    objective from the baseline objective over time for $n = 5000$ for
    runs 1 to 5.}
\end{figure}

\begin{figure}[hbtp]
  \begin{center}
    \begin{tabular}{ccc}
      \parbox[c]{52mm}{\includegraphics[width=54mm]{./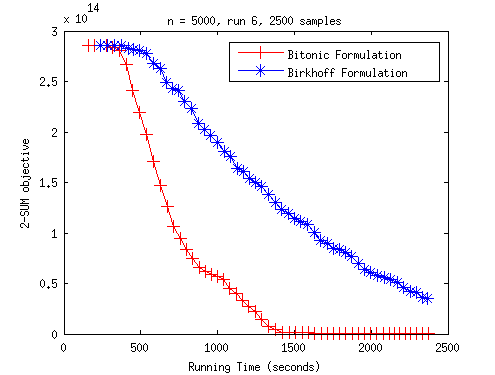}}
      \parbox[c]{52mm}{\includegraphics[width=54mm]{./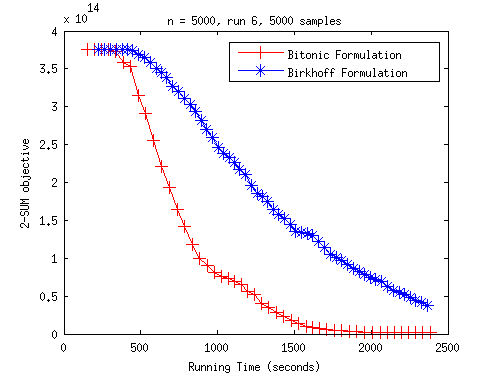}}
      \parbox[c]{55mm}{\includegraphics[width=54mm]{./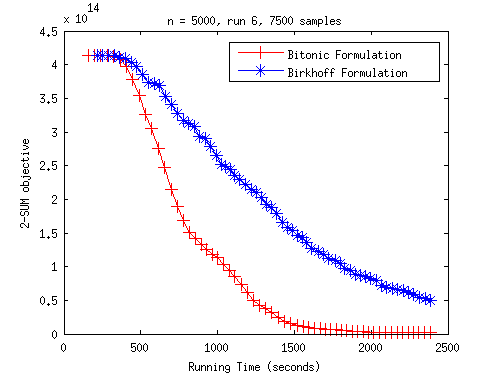}}\\ 
      
      \parbox[c]{52mm}{\includegraphics[width=54mm]{./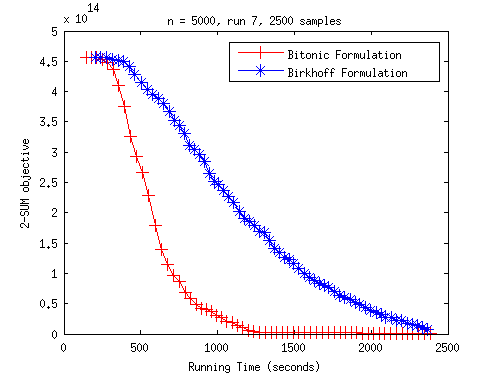}}
      \parbox[c]{52mm}{\includegraphics[width=54mm]{./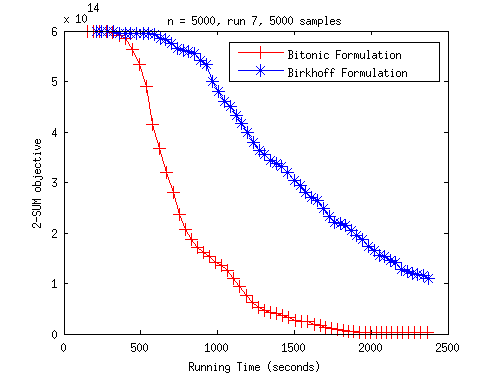}}
      \parbox[c]{55mm}{\includegraphics[width=54mm]{./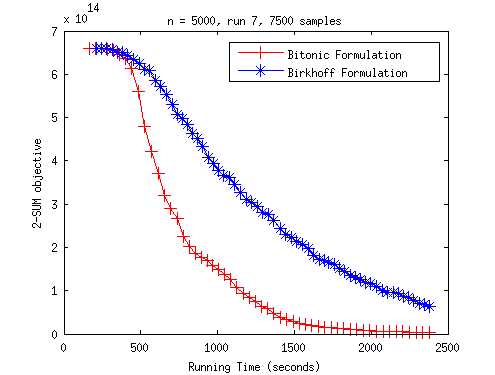}}\\ 
      
      \parbox[c]{52mm}{\includegraphics[width=54mm]{./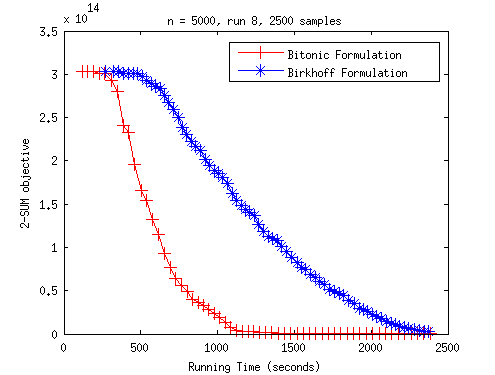}}
      \parbox[c]{52mm}{\includegraphics[width=54mm]{./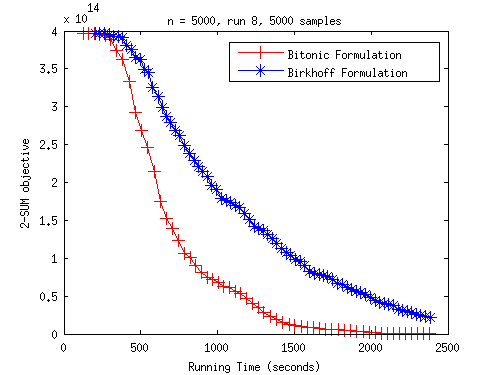}}
      \parbox[c]{55mm}{\includegraphics[width=54mm]{./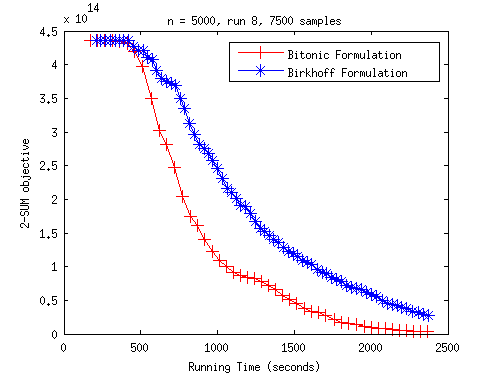}}\\ 
      
      \parbox[c]{52mm}{\includegraphics[width=54mm]{./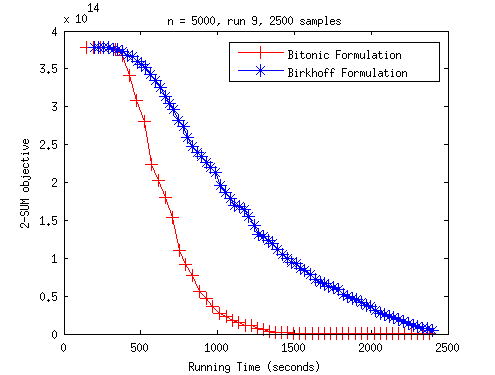}}
      \parbox[c]{52mm}{\includegraphics[width=54mm]{./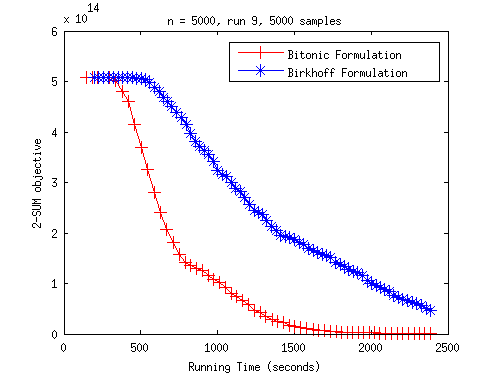}}
      \parbox[c]{55mm}{\includegraphics[width=54mm]{./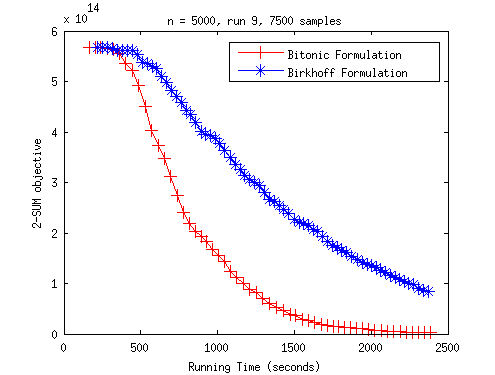}}\\
      
      \parbox[c]{52mm}{\includegraphics[width=54mm]{./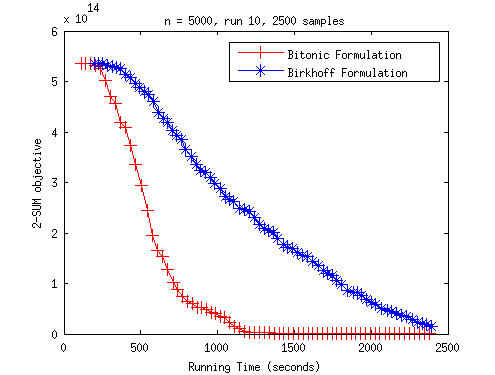}}
      \parbox[c]{52mm}{\includegraphics[width=54mm]{./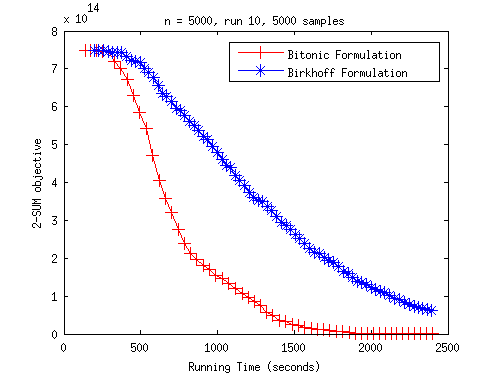}}
      \parbox[c]{55mm}{\includegraphics[width=54mm]{./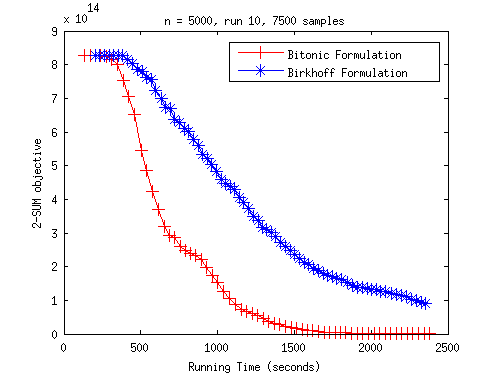}}
    \end{tabular}
  \end{center}
  \caption{Linear Markov Chain --- Plot of the difference of the 2-SUM
    objective from the baseline objective over time for $n = 5000$ for
    runs 6 to 10.}
\end{figure}

\end{document}